	\definecolor{darkred}{rgb}{0.5,0,0}
	\definecolor{darkgreen}{rgb}{0,0.5,0}
	\definecolor{darkblue}{rgb}{0,0,0.5}
\DeclareSymbolFont{cyrillic}{T2A}{cmr}{m}{n}
\DeclareMathSymbol{\Sha}{\mathalpha}{cyrillic}{216}
\theoremstyle{plain}
\newtheorem{theorem}{Theorem}[section]
\newtheorem*{theorem*}{Theorem}
\newtheorem{proposition}[theorem]{Proposition}
\newtheorem{lemma}[theorem]{Lemma}
\theoremstyle{remark}
\newtheorem{remark}[theorem]{Remark}
\newtheorem*{acknowledgements}{Acknowledgements}
\theoremstyle{definition}
\newtheorem{definition}[theorem]{Definition}
\numberwithin{equation}{section}
\newcommand{\NN}{\mathbb{N}}
\newcommand{\ZZ}{\mathbb{Z}}
\newcommand{\QQ}{\mathbb{Q}}
\newcommand{\RR}{\mathbb{R}}
\newcommand{\CC}{\mathbb{C}}
\newcommand{\Fp}{\mathbb{F}_p}
\renewcommand{\AA}{\mathbb{A}}
\DeclareMathOperator{\Gal}{Gal}
\DeclareMathOperator{\Pic}{Pic}
\DeclareMathOperator{\Br}{Br}
\DeclareMathOperator{\inv}{inv}
\DeclareMathOperator{\N}{N}
\renewcommand{\epsilon}{\varepsilon}
\newcommand{\Q}{\mathbb{Q}}
\newcommand{\Z}{\mathbb{Z}}
\renewcommand{\mod}{\operatorname{mod}}
\renewcommand{\P}{\mathbb{P}}
\newcommand{\NAM}{N_{\textbf{A},\textbf{M}}}
\newcommand{\littletaller}{\mathchoice{\vphantom{\big|}}{}{}{}}
\newcommand\res[2]{{
  \left.\kern-\nulldelimiterspace 
  #1 
  \littletaller 
  \right|_{#2} 
  }}
\DeclarePairedDelimiter\abs{\lvert}{\rvert}
\DeclarePairedDelimiter\norm{\lVert}{\rVert}
\let\oldabs\abs
\def\abs{\@ifstar{\oldabs}{\oldabs*}}
\let\oldnorm\norm
\def\norm{\@ifstar{\oldnorm}{\oldnorm*}}
\renewcommand{\O}{\mathcal{O}}
\newcommand{\inva}[1]{\inv_p(\mathcal{A}(#1))}
\newcommand\link[1]{\hyperref[#1]{#1}}
\begin{document}
\onehalfspacing
\title[Diagonal del Pezzo Surfaces of degree 2 with a Brauer-Manin obstruction]{Diagonal del Pezzo Surfaces of degree 2 with a Brauer-Manin obstruction}

\author{Harry C. Shaw}
	\address{Department of Mathematical Sciences, University of Bath, Claverton Down, Bath, BA2 7AY, UK}
	\email{hcs50@bath.ac.uk}

\date{\today}
\thanks{2020 {\em Mathematics Subject Classification} 
	 14G12 (primary),  14G05, 11N37 (secondary).
}
\begin{abstract}
In this paper we give an asymptotic formula for the quantity of diagonal del Pezzo surfaces of degree $2$ which have a Brauer-Manin obstruction to the Hasse principle when ordered by height.
\end{abstract}
\maketitle

\setcounter{tocdepth}{1}
\tableofcontents

\section{Introduction}
Given a variety $X$ over some number field $k$, a fundamental question to ask is whether $X(k)\neq \varnothing$? If $X(k)\neq \varnothing$, then since $X(k)\subseteq X(\mathbb{A}_k)$ we must also have that $X(\mathbb{A}_k)\neq\varnothing$. One may ask whether the reverse implication holds, that is, does $X(\mathbb{A}_k)\neq\varnothing \implies X(k)\neq\varnothing$? If this implication holds, we say that $X$ satisfies the \textit{Hasse principle}. It is well-known that in general this fails to hold. For example, in \cite[Ex.~6]{On_the_arithmetic_of_del_Pezzo_surfaces_of_degree_2} it was shown that the surface
\begin{equation*}
    -126x_0^4-91x_1^4+78x_2^4=w^2,
\end{equation*}
has no $\Q$-point, but it is everywhere locally soluble. In this example, the failure of the Hasse principle is explained by the \textit{Brauer-Manin obstruction}. It has been conjectured by Colliot-Th\'el\`ene that for rationally connected $k$-varieties $X$, the Brauer-Manin obstruction is the only obstruction to $X$ satisfying the Hasse principle (see \cite[Conj.~14.1.2]{Brauer-Grot-Book}). As such it is of interest to determine how often a surface in this family has a Brauer-Manin obstruction to the Hasse principle.\par
In this paper we study the Brauer-Manin obstruction to the Hasse principle for diagonal del Pezzo surfaces of degree 2 over $\Q$ with integer coefficients (a subfamily of rationally connected $\Q$-varieties). Namely the surfaces
\begin{equation*}
    S_\textbf{a}: a_0x_0^4+a_1x_1^4+a_2x_2^4=w^2 \subseteq \P_\QQ(1,1,1,2),
\end{equation*}
where $a_i\in \Z\setminus\{0\}$. We are able to find an asymptotic formula for the number of such surfaces which have a Brauer-Manin obstruction to the Hasse principle:
\begin{theorem}
\label{thm: main theorem - asymptotic formula}
 There exists a constant $A>0$ such that
 \begin{equation*}
     \#\left\{ 
     \textbf{a}\in (\Z\setminus\{0\})^3 :\begin{array}{l}\abs{a_i}\leq T \text{ for all } i\in\{0,1,2\},\\
     S_\textbf{a} \text{ has a Brauer-Manin obstruction}\\
     \text{to the Hasse principle}
     \end{array} \right\} \sim A(T\log T)^\frac{3}{2}.
 \end{equation*}
\end{theorem}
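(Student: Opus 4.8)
The plan is to proceed in four stages: compute the arithmetically relevant part of $\Br(S_\mathbf{a})$; translate ``$S_\mathbf{a}$ has a Brauer--Manin obstruction to the Hasse principle'' into an explicit condition on the integer vector $\mathbf{a}$; repackage the set of such $\mathbf{a}$ as a finite union of sets cut out by sign conditions, square-class conditions, and congruence conditions on the prime divisors of the $a_i$; and finally count.

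\textbf{Computing the Brauer group.} The surface $S_\mathbf{a}$ is a diagonal del Pezzo surface of degree $2$, realised as the double cover of $\P^2_\Q$ branched along the smooth plane quartic $a_0x_0^4+a_1x_1^4+a_2x_2^4=0$ (smooth since every $a_i\neq 0$). As $S_\mathbf{a}$ is rational, $\Br(S_\mathbf{a})/\Br(\Q)$ is finite and isomorphic to $\HH^1(\Q,\Pic\overline{S_\mathbf{a}})$, with the Galois action on the geometric Picard group (equivalently, on the $56$ exceptional curves, or on the $28$ bitangents of the branch quartic) factoring through $W(E_7)$; concretely the group is determined by the square classes of $a_0,a_1,a_2$, of the products $a_ia_j$, and of $a_0a_1a_2$. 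For the types of $\mathbf{a}$ that can actually contribute I would write down explicit generators for the relevant classes as quaternion algebras $(g,h)$, with $g,h$ built from the ``partial forms'' $w\pm\sqrt{a_i}\,x_i^2$ and $a_ix_i^4+a_jx_j^4$ appearing in the factorisations of the quartic over the relevant quadratic extensions; checking that a candidate combination lies in $\Br(S_\mathbf{a})$ is a residue computation along the branch curve and the exceptional curves, and checking it is nonconstant uses the explicit Galois action.

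\textbf{Detecting the obstruction.} For each square-class type of $\mathbf{a}$ I would first record when $S_\mathbf{a}(\AA_\Q)\neq\varnothing$: this holds automatically outside a finite set of places (good reduction primes, where the reduction is a del Pezzo surface of degree $2$ over $\Fp$ and carries an $\Fp$-point, so one lifts by Hensel), and at $\infty$, at $2$, and at the primes dividing $a_0a_1a_2$ it reduces to finitely many sign and congruence conditions (over $\RR$, exactly ``some $a_i>0$''). Then, for each generator $\mathcal{A}$ from the previous step, I would evaluate the local invariants $\inv_v\mathcal{A}(P_v)$: away from $\infty,2$ and $p\mid a_0a_1a_2$ these vanish, and at a prime $p\parallel a_i$ the invariant is governed by Legendre symbols such as $\big(\tfrac{-a_j}{p}\big)$, i.e.\ by the splitting behaviour of $p$ in quadratic fields attached to the other coefficients. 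There is an obstruction precisely when, for some $\mathcal{A}$, the sum $\sum_v\inv_v\mathcal{A}(P_v)$ is forced to equal $\tfrac12$ on all of $S_\mathbf{a}(\AA_\Q)$; using reciprocity to eliminate the ``global'' constraint, I expect this to reduce --- up to permuting the three indices and finitely many choices of sign and square class --- to $a_0a_1a_2$ being a perfect square together with the requirement that the primes dividing the ``linked'' squarefree parts of the $a_i$ all lie in prescribed residue classes (a density-$\tfrac12$ condition at each such prime), plus the finitely many conditions at $2$ and $\infty$. Making this dichotomy clean and verifying it is exhaustive over all types is, I anticipate, the main obstacle: it is a large but entirely finite case analysis.

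\textbf{The count.} Granting such a characterisation, the set of ``bad'' $\mathbf{a}$ with $|a_i|\le T$ becomes a finite union of sets in which $\mathbf{a}=(\pm v_0^2u_1u_2,\ \pm v_1^2u_0u_2,\ \pm v_2^2u_0u_1)$ (indices permuted according to the type), where $u_0,u_1,u_2$ are squarefree, pairwise coprime, and have all their prime factors in a fixed set of Dirichlet density $\tfrac12$, while the $v_i$ are free apart from finitely many congruences to a fixed modulus. Counting such vectors is then multiplicative number theory: the sums over the $v_i$ contribute a factor $T^{1/2}$ each, whereas $\sum_{u\le X}1/u$ taken over squarefree $u$ whose prime factors lie in a density-$\tfrac12$ set is $\asymp(\log X)^{1/2}$ --- this is the source of the fractional power of the logarithm, and it is extracted by forming the relevant Dirichlet series, isolating its square-root-type singularity on the line $\Re(s)=1$, and invoking the Selberg--Delange method (or a suitable Tauberian theorem with explicit error term). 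Multiplying the three linked contributions produces the main term $A(T\log T)^{3/2}$ with $A>0$ an explicit convergent product of local densities; the remaining work is inclusion--exclusion among the finitely many types together with routine control of the error terms via squarefree sieving and the quantitative Selberg--Delange estimates.
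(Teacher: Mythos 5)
Your outline captures the right skeleton --- decompose by the square class of $a_0a_1a_2$, exploit the squarefree/square factorisation of the $a_i$, and extract the $(\log T)^{3/2}$ from Dirichlet series with a square-root singularity --- and you even land on the correct dominant case ($a_0a_1a_2\in\Q^{\times 2}$, i.e.\ $\theta_\mathbf{a}:=-a_0a_1a_2\in-\Q^{\times 2}$). But there is a genuine gap in Stage 2, and it is not where you expect it. You describe the translated condition as ``$a_0a_1a_2$ a perfect square together with the primes dividing the linked squarefree parts lying in prescribed residue classes,'' i.e.\ as a positive-density multiplicative/congruence condition amenable to a ``large but finite case analysis.'' That is not what happens. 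Forcing the local invariant maps $\inv_p(\mathcal{A}_\mathbf{a}(\cdot))$ to be \emph{constant} at every place (which is where your ``density $\tfrac12$ per prime'' conditions live) is only half the story. Once constancy holds, whether the constant values sum to $\tfrac12$ is governed by a parity expression of the shape $\prod_{i<j}\bigl(\tfrac{u_iu_j}{w_{ij}}\bigr)=\pm 1$, where the $u_i$ are the ``square'' variables and the $w_{ij}$ are the ``squarefree'' variables in your decomposition. This is not a local congruence condition on each coefficient separately; it is an oscillating bilinear condition coupling them. Showing it equidistributes $50\%$--$50\%$ over the family requires a bilinear character-sum estimate (in the paper, Santens' oscillation theorem \cite[Thm.~4.22]{TimSantens}), and this analytic input --- not the algebraic case analysis --- is the central obstacle. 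Without it you cannot isolate the main term.

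A second, smaller gap is your plan to write down the Brauer class generators explicitly ``from the partial forms $w\pm\sqrt{a_i}x_i^2$.'' The paper proves (Theorem~\ref{thm: BrS/Brk=0}) that $\Br\mathcal{S}/\Br k$ vanishes over the generic function field $k=\Q(a_0,a_1,a_2)$, even though $H^1(k,\Pic\mathcal{S}_{\overline{k}})=\Z/2\Z$ is nonzero: there is \emph{no uniform formula} in $\mathbf{a}$ for the generator. The paper circumvents this by constructing $\mathcal{A}_\mathbf{a}=(-a_0a_1a_2,\,f_\mathbf{a}/x_0^2)$ from a tangent plane $f_\mathbf{a}$ at a $\Q$-point of the auxiliary quadric $Y_\mathbf{a}:a_0t_0^2+a_1t_1^2+a_2t_2^2=t_3^2$; such a point exists by Hasse--Minkowski but varies non-algebraically with $\mathbf{a}$. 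Your local-invariant computations would then have to be carried out \emph{modulo} this arbitrary choice, which is again a real technical wrinkle (the paper does it via a ``$p$-normalisation'' device). Finally, note that bounding the two secondary contributions is itself nontrivial: $\theta_\mathbf{a}\not\in\pm\Q^{\times 2}$ yields $\asymp T^{3/2}(\log T)^{9/8}$ and $\theta_\mathbf{a}\in\Q^{\times 2}$ yields $O(T^{3/2}\log T)$, and establishing that these are $o((T\log T)^{3/2})$ is part of the proof, not a routine afterthought.
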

In fact, if we define the natural partition of the coefficients as follows:
\begin{equation*}
    N^{\Br}_{=\Box}(T):=\left\{\textbf{a}\in N^{\Br}(T): -a_0a_1a_2\in \Q^{\times 2}\right\},
\end{equation*}
\begin{equation*}
    N^{\Br}_{=-\Box}(T):=\left\{\textbf{a}\in N^{\Br}(T): -a_0a_1a_2\in -\Q^{\times 2}\right\},
\end{equation*}
\begin{equation*}
    N^{\Br}_{\neq\pm\Box}(T):=\left\{\textbf{a}\in N^{\Br}(T): -a_0a_1a_2\not\in \pm\Q^{\times 2}\right\},
\end{equation*}
then Theorem~\ref{thm: main theorem - asymptotic formula} arises as a special case of the following result:
\begin{theorem}
\label{thm: main theorem split into squares and non squares}
There exists $A,B>0$ such that
\begin{equation*}
       \#N^{\Br}_{=\Box}(T) = O(T^\frac{3}{2}\log T),
\end{equation*}
\begin{equation*}
       \#N^{\Br}_{=-\Box}(T) \sim A(T\log T)^\frac{3}{2},
\end{equation*}
\begin{equation*}
       \#N^{\Br}_{\neq\pm\Box}(T) \sim BT^\frac{3}{2}(\log T)^\frac{9}{8}.
\end{equation*}
\end{theorem}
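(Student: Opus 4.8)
The plan is to reduce the problem to a counting statement about the coefficients $\mathbf{a}$ and then to analyze the three cases according to the arithmetic of $-a_0a_1a_2$. The starting point should be a classification of exactly which $S_{\mathbf{a}}$ have a Brauer–Manin obstruction to the Hasse principle, expressed purely in terms of congruence and divisibility conditions on $a_0,a_1,a_2$ at the primes dividing $2a_0a_1a_2$. Concretely, I would first establish a structural result (which one expects to precede this theorem in the paper): the Brauer group $\Br(S_{\mathbf{a}})/\Br(\mathbb{Q})$ is controlled by $2$-torsion quaternion classes of the shape $(a_i x_i^2/w + \text{const}, \cdot)$, and the class of $-a_0a_1a_2$ in $\mathbb{Q}^\times/\mathbb{Q}^{\times 2}$ governs how many independent such classes there are — this is why the partition into $=\Box$, $=-\Box$, $\neq\pm\Box$ appears. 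For each case I would pin down the finite set of local conditions at each bad prime $p$ under which a class is both everywhere locally constant-except-at-$p$ and actually obstructs; this turns $\#N^{\Br}_{\bullet}(T)$ into a sum over $\mathbf{a}$ of a product of local indicator functions (local densities) times the indicator that $S_{\mathbf{a}}$ is everywhere locally soluble.

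With the problem in this shape, the second stage is analytic: estimate $\sum_{|a_i|\le T} \prod_p \delta_p(\mathbf{a})$ where $\delta_p$ is supported on $\mathbf{a}$ satisfying $p$-adic conditions that, crucially, force $p$ to divide one or more of the $a_i$. The number of independent divisibility constraints is what distinguishes the three asymptotics. In the case $-a_0a_1a_2=\Box$, the obstruction requires a restrictive simultaneous condition — heuristically one extra squarefulness or coincidence constraint on the $a_i$ — which cuts the count down to $O(T^{3/2}\log T)$; here only an upper bound is claimed, so a relatively crude dyadic decomposition plus a divisor-sum bound should suffice. In the case $-a_0a_1a_2=-\Box$, i.e. $a_0a_1a_2$ is (minus) a square, the defining locus is itself roughly "a square worth" of the $(\mathbb{Z}/\{0\})^3$ box, giving a main term of order $(T\log T)^{3/2}$; the $\log T$ factors come from summing the multiplicative local densities — a standard Wirsing/Selberg–Delange or Landau-type argument for sums of multiplicative functions whose average at a prime behaves like a constant over $p$. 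In the case $-a_0a_1a_2\neq\pm\Box$ one gets the anomalous exponent $9/8$ on the logarithm, which strongly suggests that here the relevant multiplicative function has average value $\sim c/p$ on a density-$1/8$–weighted set of primes (a Chebotarev-type splitting condition forcing, say, simultaneous quartic-residue behaviour), so that a Selberg–Delange analysis with a non-integer exponent $\kappa = 9/8$ produces $T^{3/2}(\log T)^{\kappa}$ with $\kappa - 1 = 1/8$.

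Carrying this out, the key steps in order are: (i) recall/cite the classification of $\Br(S_{\mathbf{a}})$ and the explicit obstructing classes, case by case in $-a_0a_1a_2 \bmod \mathbb{Q}^{\times 2}$; (ii) for each case translate "has a BM obstruction" into a sum over $\mathbf{a}$ of a product over primes of explicitly computed local densities $\sigma_p$, incorporating everywhere-local-solubility as additional factors; (iii) reparametrize to strip off the divisibility constraints (write $a_i = \prod_{p} p^{e_{i,p}} \cdot (\text{coprime part})$) so the remaining sum is over a box of coprime-ish integers weighted by a multiplicative function $g$; (iv) compute the Dirichlet series $\sum g(\mathbf{a}) |a_0a_1a_2|^{-s}$, identify its rightmost singularity and the order of the pole/branch point, and apply a Tauberian theorem (Selberg–Delange for the non-integer exponents, a simpler Ikehara/Landau argument or even elementary hyperbola-method bounds where only $O(\cdot)$ is needed); (v) assemble the three estimates and note that $N^{\Br} = N^{\Br}_{=\Box} \sqcup N^{\Br}_{=-\Box} \sqcup N^{\Br}_{\neq\pm\Box}$, so the dominant term $A(T\log T)^{3/2}$ from the $-\Box$ case yields Theorem 1.2.

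The main obstacle, I expect, is step (ii)–(iv) in the case $-a_0a_1a_2 \neq \pm\Box$: one must show that the local solubility and obstruction conditions really do conspire to produce a multiplicative weight with mean $\sim (1/8)\cdot(\text{something})/p$ over primes — the fraction $1/8$ presumably coming from a $(\mathbb{Z}/2)^3$ or dihedral Galois condition (three independent quadratic/quartic splitting conditions among the $a_i$ and $-a_0a_1a_2$) — and then handle the resulting Selberg–Delange analysis with exponent $9/8$ uniformly enough to get a genuine asymptotic rather than just an upper bound. Keeping the error terms under control when three variables each range over a box and are linked by a squareness condition on their product (a thin-set constraint) is the delicate analytic point; I would handle it by fixing the "square part" of $a_0a_1a_2$ and summing over the cofactor, reducing to a two-dimensional divisor-type sum to which the multiplicative machinery applies cleanly.
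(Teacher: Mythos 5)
Your proposal captures the broad outline — reduce to local/congruence conditions on $\mathbf{a}$, reparametrize by stripping off divisibility structure, and run a Selberg--Delange type analysis on the resulting multiplicative weight — and your heuristic for the exponent $9/8$ (three pairs $\{i,j\}$, each contributing $(\log T)^{3/8}$ from a frobenian mean involving quadratic/quartic splitting) is essentially what the paper's $M_\textbf{A}$ records. But there is a genuine gap in step (ii): you write that ``has a BM obstruction'' can be expressed as a sum over $\mathbf{a}$ of a \emph{product of local indicator functions}. It cannot. The Brauer--Manin condition is global: $S_\mathbf{a}$ is obstructed iff $\sum_p \inv_p(\mathcal{A}(s_p))$ is nonzero for all adelic points, which requires not only that $\inv_p(\mathcal{A}(-))$ be \emph{constant} at each $p$ (a genuinely local condition), but also that the resulting \emph{sum} of these constants equals $\tfrac12$ rather than $0$. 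The latter is not a product-of-local-indicators condition, and in fact the paper's central and delicate step (Lemma~\ref{lem: reducing N^A to N^loc}) shows that \emph{exactly half} of the surfaces with everywhere-constant invariants are obstructed: fixing the $\mathbf{v},\mathbf{w}$-part and varying $\mathbf{u}$, the obstruction condition becomes a Jacobi-symbol condition $\prod_{i<j}\left(\frac{u_iu_j}{w_{ij}}\right)=\delta(\mathbf{A},\mathbf{M},\mathbf{v},\mathbf{w})$, and a bilinear character-sum estimate (the ``linked variables'' machinery borrowed from \cite{TimSantens}) shows this holds $50\%$ of the time up to an acceptable error. Without this $\tfrac12$-factor argument your strategy yields at best the count of surfaces whose local invariant maps are constant, which overcounts by a factor that you cannot determine from local densities alone.

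A secondary issue: you implicitly assume a uniform explicit generator of $\Br S_{\mathbf{a}}/\Br\Q$ varying algebraically with $\mathbf{a}$, which Theorem~\ref{thm: BrS/Brk=0} rules out. The paper instead works with the non-canonical algebra $\mathcal{A}_\mathbf{a}$ (depending on a choice of rational point on an auxiliary quadric), and must prove (Lemma~\ref{lem: condtion is independant of choice of coefficents}, Lemma~\ref{lem: formula for when p divdies w_ij}) that the relevant conditions are independent of that choice and transform in a controlled way under $\mathbf{a}\mapsto\mathbf{a}\mathbf{u}^2$. This is a real technical hurdle your sketch does not acknowledge, and it is precisely what forces the reparametrization $a_i=A_iu_i^2\prod_{j\ne i}v_{ij}w_{ij}$ modulo $8m_\mathbf{A}$-th powers, rather than the more naive factorization you propose. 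Finally, the $=\Box$ bound in the paper does not come from a ``coincidence constraint'' on the $a_i$ but from exhibiting a \emph{different} Brauer class $\mathcal{B}_{ij3,\sqrt{\theta_\mathbf{A}}}$ available only when $\theta_\mathbf{A}\in\Q^{\times2}$, which has surjective local invariant at essentially every prime dividing some $a_k$ to an even power and hence kills the obstruction outside a thin set.
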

Recently there has been similar work carried out on a range of surfaces, for example; see \cite{GvirtzDamián2022Qaod} and \cite{TimSantens} for K3-surfaces, \cite{BretècheR.dela2014DoCs} and \cite{RomeNick2019APPo} for Ch\^atelet surfaces, and \cite{JahnelJörg2016Otno} and \cite{MitankinVladimir2022Rpod} for del Pezzo surfaces. As far as the author is aware this is the first such result for any family of del Pezzo surfaces of degree $2$. Furthermore, while the majority of papers were only able to obtain the correct order of magnitude, we are able to obtain an asymptotic formula.\par
In \cite{On_the_arithmetic_of_del_Pezzo_surfaces_of_degree_2} the Brauer groups of the surfaces $S_\textbf{a}$ were calculated, in which it was proved that 100\% of these surfaces satisfy $\Br S_\textbf{a}/\Br\Q\cong \Z/2\Z$, generated by an explicit quaternion algebra $\mathcal{A}$. However, there is not a uniform generator across the family of these surfaces, adding further difficulty to our problem. Precisely, we prove the following result:
\begin{theorem}
\label{thm: BrS/Brk=0}
Let $a_0,a_1,a_2$ be algebraically independent transcendental elements over $\Q$ and let $k:=\Q(a_0,a_1,a_2)$. Let $\mathcal{S}\subseteq \P_k(1,1,1,2)$ be the surface given by
\begin{equation*}
    a_0x_0^4+a_1x_1^4+a_2x_2^4=w^2.
\end{equation*}
Then we have $\Br \mathcal{S}/\Br k=0$, however $H^1(k, \Pic \mathcal{S}_{\overline{k}})=\Z/2\Z$.
\end{theorem}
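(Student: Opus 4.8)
\emph{The plan.}
Via its anticanonical model, $\mathcal{S}$ is the double cover of $\P^2_k$ branched along the smooth plane quartic $C:a_0x_0^4+a_1x_1^4+a_2x_2^4=0$, so it is a del Pezzo surface of degree $2$; in particular $\mathcal{S}_{\overline{k}}$ is rational, so $\Br\mathcal{S}_{\overline{k}}=0$, $\overline{k}[\mathcal{S}]^\times=\overline{k}^\times$ and $\Br\mathcal{S}=\Br_1\mathcal{S}$. The low-degree terms of the Hochschild--Serre spectral sequence for $\mathcal{S}\to\Spec k$ then give the exact sequence
\[
0\to\Pic\mathcal{S}\to(\Pic\mathcal{S}_{\overline{k}})^{G_k}\to\Br k\to\Br\mathcal{S}\to H^1(k,\Pic\mathcal{S}_{\overline{k}})\xrightarrow{d}H^3(k,\overline{k}^\times),
\]
so that $\Br\mathcal{S}/\Br k\cong\ker d$. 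It therefore suffices to establish (a) $H^1(k,\Pic\mathcal{S}_{\overline{k}})\cong\Z/2\Z$ and (b) that the differential $d$ is injective.

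\emph{The Galois module, and part (a).}
As for every del Pezzo surface of degree $2$, $\Pic\mathcal{S}_{\overline{k}}\cong\Z^{8}$, the $G_k$-action factors through $W(E_7)$, and it is determined by the induced permutation of the $56$ exceptional curves, which in turn map two-to-one onto the $28$ bitangents of $C$, with fibres interchanged by the covering involution. Following the method of \cite{On_the_arithmetic_of_del_Pezzo_surfaces_of_degree_2}, I would write the bitangents of the diagonal quartic down explicitly: twelve of them are the ``hyperflex'' lines $x_i-\mu x_j=0$ with $a_i\mu^4+a_j=0$ (four for each of the three pairs $\{i,j\}$), while the remaining sixteen involve all three variables together with products of square roots of the $a_i$. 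Recording their fields of definition, together with the signs deciding which fibre of the double cover is which, identifies the splitting field $L$ of $\Pic\mathcal{S}_{\overline{k}}$ -- generated over $k$ by $i$ and by suitable fourth roots of ratios of the $a_i$ -- and the image $G=\Gal(L/k)\subseteq W(E_7)$; since $a_0,a_1,a_2$ are algebraically independent over $\Q$, this $G$ is as large as it can be for a diagonal quartic. With $G$ and its permutation action on the $56$ curves in hand, $H^1(G,\Pic\mathcal{S}_{\overline{k}})$ is a finite, essentially mechanical computation -- for instance by presenting $\Pic\mathcal{S}_{\overline{k}}$ as a sublattice of the permutation module on the exceptional curves -- and it comes out to be $\Z/2\Z$. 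This gives (a) and the last assertion of the theorem.

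\emph{Part (b).}
This is the crux. From a Galois-stable configuration of bitangents one builds an explicit $1$-cocycle representing the nonzero class $\xi\in H^1(G,\Pic\mathcal{S}_{\overline{k}})$; chasing it through the spectral sequence, the class $d(\xi)\in H^3(k,\overline{k}^\times)$ is $2$-torsion and is pinned down by a symbol manipulation in $k^\times$, the outcome being (up to signs and $2$-torsion) the triple symbol attached to $a_0,a_1,a_2$ -- indeed the division of the count in Theorem~\ref{thm: main theorem split into squares and non squares} into three cases according to $-a_0a_1a_2$ modulo $\Q^{\times 2}$ ought to reflect precisely its shape. A residue computation along the divisors $a_i=0$ on $\Spec\Q[a_0,a_1,a_2]$ then shows $d(\xi)\neq 0$ -- here $H^3(k,\overline{k}^\times)$ need not vanish, in contrast with the number field case -- so $d$ is injective and $\Br\mathcal{S}/\Br k=0$. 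This is consistent with $\mathcal{S}$ having no $k$-point, since a section of $\mathcal{S}\to\Spec k$ would split the edge maps and force $d=0$. A more transparent, if less self-contained, route to (b) is that a nonzero element of $\Br\mathcal{S}/\Br k$ would spread out over a dense open of $\Spec\Q[a_0,a_1,a_2]$ to a single quaternion algebra generating $\Br S_{\textbf{a}}/\Br\Q$ for a dense set of $\textbf{a}$, contradicting the absence of a uniform generator recorded in \cite{On_the_arithmetic_of_del_Pezzo_surfaces_of_degree_2}.

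\emph{Main obstacle.}
The two genuinely demanding points are the explicit determination of $\Pic\mathcal{S}_{\overline{k}}$ as a $G_k$-module for the generic diagonal quartic -- the bitangent bookkeeping, including the double-cover signs, and the identification of $G$ inside $W(E_7)$ -- and, above all, the evaluation of $d$ on the generator of $H^1$: it is exactly this step that accounts for the phenomenon in the theorem, that the $\Z/2\Z$ in $H^1(k,\Pic\mathcal{S}_{\overline{k}})$ fails to lift to a non-constant Brauer class on $\mathcal{S}$.
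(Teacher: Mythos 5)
Your overall framing is right: the Hochschild--Serre sequence reduces everything to (a) computing $H^1(k,\Pic\mathcal{S}_{\overline{k}})$ and (b) showing the differential $d^{1,1}_{\mathcal{S}}\colon H^1(k,\Pic\mathcal{S}_{\overline{k}})\to H^3(k,\overline{k}^\times)$ is injective, and your treatment of (a) matches the paper, which simply carries over the computation of $H^1(\Q,\Pic S_{\mathbf{a},\overline{\Q}})$ from the generic case in \cite{On_the_arithmetic_of_del_Pezzo_surfaces_of_degree_2}. The gap is in (b). You correctly identify that evaluating $d$ on the generator is the crux, but your primary route (chase a cocycle to ``a symbol manipulation in $k^\times$'' yielding a ``triple symbol'', then kill it by residues along $a_i=0$) never actually supplies the mechanism: there is no specific symbol written down, no residue computed, and no explanation of how a $1$-cocycle with values in $\Pic\mathcal{S}_{\overline{k}}$ gets converted into an element of $H^3(k,\overline{k}^\times)$ in a form amenable to residues. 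The paper sidesteps exactly this difficulty with a concrete geometric device you have not found: it considers the affine open $V=\{\omega\neq 0\}\subset\mathcal{S}$, the affine conic $U\colon a_0x_0^2+a_1x_1^2+a_2x_2^2=1$, and the squaring morphism $f\colon V\to U$, $(x_0,x_1,x_2,\omega)\mapsto(x_0^2,x_1^2,x_2^2,\omega)$. Functoriality of the Hochschild--Serre spectral sequence gives $d^{1,1}_{\mathcal{S}}=d^{1,1}_V\circ\mathrm{res}_V$ and $d^{1,1}_U=d^{1,1}_V\circ f^*$; Uematsu \cite{UematsuTetsuya} has already shown $d^{1,1}_U$ is injective with $H^1(k,\Pic U_{\overline{k}})\cong\Z/2\Z$; and the remaining work is to exhibit an explicit $\psi\in H^1(k,\Pic\mathcal{S}_{\overline{k}})$ (built from divisors like $\{x_0^2+\alpha x_1^2=x_2^2+\beta\omega=0\}$, adjusted by the hyperplane class so that the cocycle condition holds) whose restriction to $V$ equals $f^*\phi$. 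That pullback trick is the missing idea; without it your sketch does not constitute a proof.

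A smaller but real concern is your ``more transparent, if less self-contained, route'': inferring $\Br\mathcal{S}/\Br k=0$ from ``the absence of a uniform generator recorded in \cite{On_the_arithmetic_of_del_Pezzo_surfaces_of_degree_2}.'' In this paper the implication runs the other way -- Theorem~\ref{thm: BrS/Brk=0} is the reason there is no uniform formula for $\mathcal{A}_{\mathbf{a}}$ -- and the cited reference gives a case-by-case description of generators, not a theorem asserting non-existence of a uniform one. So as written this alternative argument risks circularity, or at least rests on a fact that is not actually in the cited source.
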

This prevents us from being able to more easily construct families of the surfaces $S_\textbf{a}$ which have a Brauer-Manin obstruction. We are however able to adapt the methods used in \cite{TimSantens} to mitigate some of this difficulty.

\subsection{Outline of the paper}
In \S\ref{sec: Calculating the Local Invariant Maps} we begin by constructing a representative $\mathcal{A}$ of an element of the Brauer group using the methods given in \cite{TimSantens}. We then proceed to calculate certain local invariant maps for $\mathcal{A}$ using similar methods as given in \cite{BrightM.J.2011TBoo} and \cite{TimSantens}. We end this section by determining how said maps change when the coefficients of the surface vary.\par
In \S\ref{sec: counting} we will prove the main theorem by following the methods used in \cite[\S5]{TimSantens}. We start by proving that the majority of surfaces with a Brauer-Manin obstruction actually have a Brauer-Manin obstruction induced by $\mathcal{A}$.
We then prove that $50\%$ of the surfaces for which $\inv_p(\mathcal{A}(-))$ is constant for all places $p$ and is everywhere locally soluble have a Brauer-Manin obstruction induced by $\mathcal{A}$. We lastly convert this set into a certain sum, and after simplifying the sum, provide an asymptotic formula which will complete the proof of the main theorem.\par
In \S\ref{sec: Uniform Formula} we prove that there is no uniform formula for $\mathcal{A}$ using the methods given in \cite{UematsuTetsuya} and \cite{TimSantens}. While this section does not directly effect the rest of the paper it provides an explanation as to the difficulty of some of the proofs.
\subsection{Notation and conventions used}
\label{subsec: Notation and conventions used}
Where possible we will use the notation given in \cite{TimSantens} to make the similarities between the papers clearer.
\begin{definition}
For $\textbf{a}=(a_0,a_1,a_2)\in (\Q^{\times})^3$ we let $S_{\textbf{a}}\subseteq\P_\QQ(1,1,1,2)$ denote the surface    
given by the equation
\begin{equation*}
    a_0x_0^4+a_1x_1^4+a_2x_2^4=w^2,
\end{equation*}
and let $X_\textbf{a}\subseteq \P_\QQ^3$ denote the $K3$ surface given by the equation
\begin{equation*}
    a_0x_0^4+a_1x_1^4+a_2x_2^4=x_3^4.
\end{equation*}
We let $\pi_\textbf{a}:X_\textbf{a}\rightarrow S_\textbf{a}; [x_0:x_1:x_2:x_3]\mapsto[x_0:x_1:x_2:x_3^2]$. We will write $\theta_\textbf{a}:=-a_0a_1a_2$.
\end{definition}
\begin{acknowledgements}
The author would like to thank Daniel Loughran for suggesting the problem and his tireless support throughout the process. The author would also like to thank Tim Santens for his useful comments on an earlier revision of this paper.
\end{acknowledgements}

\section{Calculating the local invariant maps}
\label{sec: Calculating the Local Invariant Maps}
\subsection{Brauer-Manin obstruction}
\label{subsec: Brauer-Manin obstruction}
In this section we will briefly recall the Brauer-Manin obstruction. Firstly recall from \cite[Def.~13.1.7]{Brauer-Grot-Book} that for any place $p$ of $\Q$ we have a homomorphism
\begin{equation*}
    \inv_p:\Br \Q_p \rightarrow \Q/\Z.
\end{equation*}
For a quaternion algebra $(a,b)\in \Br \Q_p[2]$ this map is given by
\begin{equation*}
    \inv_p((a,b))=\rho((a,b)_p),
\end{equation*}
where $(-,-)_p$ denotes the Hilbert symbol with respect to $p$, and $\rho:\mu_2\xrightarrow{\sim} (\Q/\Z)[2]$ is the unique isomorphism given by $-1\mapsto\frac{1}{2}$. \par 
For a smooth variety $S$ over $\Q$, we define the Brauer group of $S$ to be $\Br S:= \operatorname{H}^2_{\'et}(S,\mathbb{G}_{m,S})$. Let $B\subseteq \Br S$ be any non-empty subset, and let
\begin{equation*}
    S(\AA_\Q)^{B}:=\bigcap_{\mathcal{A}\in B}\left\{ (s_p)_p\in S(\AA_\Q): \sum_p\inv_p(\mathcal{A}(s_p))=0\right\}.
\end{equation*}
We say $S$ \textbf{has a Brauer-Manin obstruction to the Hasse principle} if $S(\AA_\Q)\neq \varnothing$ but $S(\AA_\Q)^{\Br S}=\varnothing$. For any $\mathcal{A}\in \Br S$, we say $S$ \textbf{has a Brauer-Manin obstruction induced by $\mathcal{A}$} if $S(\AA_\Q)\neq \varnothing$ but $S(\AA_\Q)^{\mathcal{A}}=\varnothing$. In particular, from \cite[Thm.~13.3.2]{Brauer-Grot-Book} we know that $S(\Q)\subseteq S(\AA_\Q)^{\Br S}$, and hence if $S$ has a Brauer-Manin obstruction to the Hasse principle, then it cannot satisfy the Hasse principle. Throughout the rest of the paper we will simply write \textit{Brauer-Manin obstruction} when we mean \textit{Brauer-Manin obstruction to the Hasse principle}.

\subsection{Constructing an element of the Brauer group}
\label{subsec: Constructing an element of the Brauer group}
In this section we will construct multiple elements of $\Br S_\textbf{a}$, which we will use in \S\ref{subsec: Computing the local invariant maps} and \S\ref{sec: counting}.
\begin{definition}
 Let  $Y_\textbf{a}\subseteq \P^3_\Q$ denote the smooth quadratic surface given by the equation
 \begin{equation*}
    a_0t_0^2+a_1t_1^2+a_2t_2^2=t_3^2.
\end{equation*}
Let $\phi_\textbf{a}:S_\textbf{a}\rightarrow Y_\textbf{a}$ denote the morphism given by $[x_0:x_1:x_2:w]\mapsto[x_0^2:x_1^2:x_2^2:w]$.
\end{definition}
In particular, if $S_\textbf{a}$ is everywhere locally soluble, then so is $Y_\textbf{a}$, and hence by the Hasse-Minkowski Theorem $Y_\textbf{a}(\Q)\neq\varnothing$.
\begin{definition}
Assume $\theta_\textbf{a}\in \Q^{\times 2}$. Let $a_3:=-1$, $x_3^2:=w$ and $\{i,j,k,\ell\}:=\{0,1,2,3\}$. Let $Z^{ijk}_\textbf{a}\subseteq \P^2_\Q$ denote the conic given by
 \begin{equation*}
     a_iZ_i^2+a_jZ_j^2+a_kZ_k^2=0.
 \end{equation*}
Fix a choice of $\sqrt{\theta_\textbf{a}}$. Let $\gamma_{\sqrt{\theta_\textbf{a}}}:S_\textbf{a} \rightarrow Z^{ijk}_\textbf{a}$ denote the rational map given by
\begin{align*}
    Z_i=x_i^2x_k^2-\frac{\sqrt{\theta_\textbf{a}}}{a_ia_k}x_j^2x_\ell^2, \text{  }Z_j=\frac{\sqrt{\theta_\textbf{a}}}{a_ja_k}x_i^2x_\ell^2+x_j^2x_k^2, \text{  } Z_k=x_k^4+\frac{a_\ell}{a_k}x_{\ell}^4.
\end{align*}   
\end{definition}
We now adapt the construction of elements of $\Br X_\textbf{a}$ in \cite[Prop.~3.1]{TimSantens} to our case:
\begin{proposition}
\label{prop: quaternion algebra definition from BrightM.J.2011TBoo}
 Let $S_\textbf{a}$ be everywhere locally soluble.
 \begin{enumerate}
     \item Let $P=[y_0:y_1:y_2:y_3]\in Y_\textbf{a}(\Q)$. Let $g_\textbf{a}$ be the polynomial defining the tangent plane to $Y_\textbf{a}$ at $P$, that is
 \begin{equation*}
     g_\textbf{a}:= a_0y_0t_0+a_1y_1t_1+a_2y_2t_2-y_3t_3.
 \end{equation*}
 Let $f_\textbf{a}$ be the pullback of $g_\textbf{a}$ via $\phi_\textbf{a}$ to $S_\textbf{a}$, that is
 \begin{equation*}
f_\textbf{a}:=\phi_\textbf{a}^*g_\textbf{a}=a_0y_0x_0^2+a_1y_1x_1^2+a_2y_2x_2^2-y_3w.
 \end{equation*}
 Then the quaternion algebra 
 \begin{equation*}
     \mathcal{A}_\textbf{a}:=(-a_0a_1a_2,f_\textbf{a}/x_0^2)
 \end{equation*}
 lies in $\Br S_\textbf{a}$. Furthermore, the class of $[\mathcal{A}_\textbf{a}]$ in $\Br S_\textbf{a}/\Br\Q$ is independent of the choice of $P$.
 \item Assume $\theta_\textbf{a}\in \Q^{\times 2}$. Let $a_3:=-1$, $x_3^2:=w$ and $\{i,j,k,\ell\}:=\{0,1,2,3\}$. Then the rational map $\gamma_{\sqrt{\theta_\textbf{a}}}:S_\textbf{a} \rightarrow Z^{ijk}_\textbf{a}$ can be extended to a morphism, and hence by the Hasse-Minkowski Theorem $Z^{ijk}_\textbf{a}(\Q)\neq\varnothing$. Let $P\in Z^{ijk}_\textbf{a}(\Q)$, and let $h_\textbf{a}$ be the tangent line to $Z^{ijk}_\textbf{a}$ at $P$. Then the quaternion algebra 
\begin{equation*}
    \mathcal{B}_{ijk,\sqrt{\theta_\textbf{a}}}:=\left(-a_ia_j\sqrt{\theta_\textbf{a}},\gamma_{\sqrt{\theta_\textbf{a}}}^*\frac{h_\textbf{a}}{Z_k}\right)
\end{equation*}
lies in $\Br S_\textbf{a}$. Furthermore, the class of $[\mathcal{B}_{ijk,\sqrt{\theta_\textbf{a}}}]$ in $\Br S_\textbf{a}/\Br\Q$ is independent of choice of $P$.
 \end{enumerate}

\end{proposition}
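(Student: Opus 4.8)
The plan is to verify in each case that the stated symbol defines an unramified class, i.e.\ that it lies in the image of $\Br S_\textbf{a}\hookrightarrow\Br\Q(S_\textbf{a})$, by checking that its residue along every codimension-one point of $S_\textbf{a}$ vanishes. For part (1), I would first observe that $f_\textbf{a}/x_0^2$ is a rational function on $S_\textbf{a}$ and that $-a_0a_1a_2=\theta_\textbf{a}$ is a constant, so the symbol $\mathcal{A}_\textbf{a}=(\theta_\textbf{a},f_\textbf{a}/x_0^2)$ certainly lies in $\Br\Q(S_\textbf{a})[2]$. The residue of a symbol $(c,g)$ with $c$ constant at a prime divisor $D$ is $v_D(g)\bmod 2$ times the class of $c$ in $\Q(D)^\times/\Q(D)^{\times2}$; hence it can only be nonzero along divisors where $v_D(f_\textbf{a}/x_0^2)$ is odd, namely components of the divisors of $f_\textbf{a}$ and of $x_0$ on $S_\textbf{a}$. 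The geometric input is that $g_\textbf{a}=0$ is the tangent plane to the quadric $Y_\textbf{a}$ at the rational point $P$: its pullback to $Y_\textbf{a}$ cuts out a divisor that is a union of two lines through $P$, and on the quadric the class of $\theta_\textbf{a}$ becomes a square (since $Y_\textbf{a}\cong\PP^1\times\PP^1$ over $\Q(\sqrt{\theta_\textbf{a}})$, or more directly because $a_0a_1a_2t_3^2$ differs from a square by the quadric relation along that tangent locus). Pulling this back through $\phi_\textbf{a}$, I would argue that along each component of $\divv(f_\textbf{a})$ the residue of $\theta_\textbf{a}$ dies because $\theta_\textbf{a}$ is already a square in the relevant residue field, while along $\{x_0=0\}$ one uses the relation $a_1x_1^4+a_2x_2^4=w^2$ to see $\theta_\textbf{a}=-a_0a_1a_2$ is a square there as well. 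This is essentially the computation of \cite[Prop.~3.1]{TimSantens} and \cite{BrightM.J.2011TBoo} transported from $X_\textbf{a}$ to $S_\textbf{a}$, using that $\phi_\textbf{a}$ is a morphism so that pullback of Brauer classes is defined.

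For the independence of the class in $\Br S_\textbf{a}/\Br\Q$: given two rational points $P,P'\in Y_\textbf{a}(\Q)$ with tangent forms $g_\textbf{a},g_\textbf{a}'$, the quotient $g_\textbf{a}/g_\textbf{a}'$ is a rational function on $Y_\textbf{a}$, hence $f_\textbf{a}/f_\textbf{a}'$ is one on $S_\textbf{a}$, and bilinearity of the symbol gives $\mathcal{A}_\textbf{a}-\mathcal{A}_\textbf{a}'=(\theta_\textbf{a},f_\textbf{a}/f_\textbf{a}')$. I would then show this last class is constant, i.e.\ in $\Br\Q$, by checking it is unramified with trivial residues everywhere — the same tangent-line geometry shows the only possible ramification is along lines on $Y_\textbf{a}$ where $\theta_\textbf{a}$ is a square, so all residues vanish and the class descends from $\Br\Q$. (One can alternatively invoke that $\Br S_\textbf{a}/\Br\Q$ injects into $\Br\overline{\QQ}(S_\textbf{a})$-invariants and that geometrically the class is trivial.)

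For part (2), the structure is parallel. First extend $\gamma_{\sqrt{\theta_\textbf{a}}}$ to a morphism: the formulae for $Z_i,Z_j,Z_k$ are degree-$8$ in the coordinates of $\PP(1,1,1,2)$ and one must check the three forms have no common zero on $S_\textbf{a}$, using the defining equation $a_0x_0^4+a_1x_1^4+a_2x_2^4=w^2$ and $\theta_\textbf{a}=a_0a_1a_2\cdot(-1)^{?}$ being a square; where the obvious chart degenerates one rewrites $Z_k=x_k^4+\frac{a_\ell}{a_k}x_\ell^4$ and clears the square root using the relation, exactly as for the analogous map on $X_\textbf{a}$ in \cite{TimSantens}. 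Since a conic with a rational point is $\cong\PP^1$, $Z^{ijk}_\textbf{a}(\Q)\neq\varnothing$ follows. Then $\mathcal{B}_{ijk,\sqrt{\theta_\textbf{a}}}=\bigl(-a_ia_j\sqrt{\theta_\textbf{a}},\gamma^*(h_\textbf{a}/Z_k)\bigr)$ is a symbol with constant first slot, and the residue along any prime divisor $D$ is $v_D\bigl(\gamma^*(h_\textbf{a}/Z_k)\bigr)\cdot[-a_ia_j\sqrt{\theta_\textbf{a}}]$; because $h_\textbf{a}$ is the tangent line to the conic $Z^{ijk}_\textbf{a}$ at a rational point, $h_\textbf{a}=0$ meets the conic in a double point, so on the conic $-a_ia_j$ (equivalently $-a_ia_j\sqrt{\theta_\textbf{a}}$, given the square relations among the $a$'s) is a square along that locus and the residue vanishes; the only other divisor to check is $\{Z_k=0\}$, handled by the same relation $a_iZ_i^2+a_jZ_j^2+a_kZ_k^2=0$. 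Independence of $P$ again reduces to showing $(-a_ia_j\sqrt{\theta_\textbf{a}},h_\textbf{a}/h_\textbf{a}')$ is unramified hence constant.

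The main obstacle I anticipate is not the residue bookkeeping per se but the passage from $X_\textbf{a}$ to $S_\textbf{a}$ through the non-flat / ramified maps $\pi_\textbf{a}$ and $\phi_\textbf{a}$: one must be careful that the divisors appearing on $S_\textbf{a}$ (in particular the ramification locus $\{w=0\}$ and the coordinate hyperplanes) are correctly accounted for, that $\phi_\textbf{a}$ genuinely is a morphism on all of $S_\textbf{a}$ so that $\phi_\textbf{a}^*$ on Brauer groups makes sense, and that the square-root quantities $\sqrt{\theta_\textbf{a}}/(a_ia_k)$ etc.\ really lie in $\Q$ when $\theta_\textbf{a}\in\Q^{\times2}$ so that $\gamma_{\sqrt{\theta_\textbf{a}}}$ is defined over $\Q$. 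Handling the extension of $\gamma_{\sqrt{\theta_\textbf{a}}}$ to a morphism — showing the base locus is empty — is the one genuinely computational point; I would do it by exhibiting, on each standard affine chart of $S_\textbf{a}$, an explicit alternative expression for the $Z$-coordinates obtained by multiplying through by a conjugate and using the surface equation, mirroring the argument in \cite[Prop.~3.1]{TimSantens}.
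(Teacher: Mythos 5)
Your proposal takes essentially the same route as the paper, whose proof of this proposition consists of a single sentence deferring to \cite[Prop.~3.1]{TimSantens} ``by appropriately changing the variables''; what you have written is a sketch of the residue computation that this citation is pointing to, transported from $X_\textbf{a}$ to $S_\textbf{a}$ via $\phi_\textbf{a}$. So the strategy is correct and matches the source. A few details, however, need tightening before the sketch would actually close.

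First, along $D=\{x_0=0\}$ you do not need (and should not claim) that $\theta_\textbf{a}$ becomes a square in $\Q(D)$; that is false in general. What saves you there is simply that $f_\textbf{a}$ does not vanish identically on $\{x_0=0\}$ (one checks the tangent point $P$ cannot be $[1:0:0:0]$ since $a_0\neq 0$), so $v_D(f_\textbf{a}/x_0^2)=-2$ is even and the residue vanishes for parity reasons, with no condition on $\theta_\textbf{a}$ needed. Second, for independence of $P$, the step ``$(\theta_\textbf{a},f_\textbf{a}/f_\textbf{a}')$ is unramified with trivial residues everywhere hence in $\Br\Q$'' is a non sequitur as stated: unramified on $S_\textbf{a}$ only places the class in $\Br S_\textbf{a}$. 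The correct version of your idea is to note that $(\theta_\textbf{a},f_\textbf{a}/f_\textbf{a}')=\phi_\textbf{a}^*(\theta_\textbf{a},g_\textbf{a}/g_\textbf{a}')$ and to show the latter is unramified on the quadric $Y_\textbf{a}$; since $Y_\textbf{a}$ has a rational point and $H^1(\Q,\Pic Y_{\textbf{a},\overline{\Q}})=0$ (the two rulings give the regular representation of the Galois group $\Gal(\Q(\sqrt{\theta_\textbf{a}})/\Q)$, so $H^1$ vanishes by Shapiro), one gets $\Br Y_\textbf{a}=\Br\Q$, hence $(\theta_\textbf{a},g_\textbf{a}/g_\textbf{a}')$ is constant and so is its pullback. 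Third, the sentence ``Since a conic with a rational point is $\cong\PP^1$, $Z^{ijk}_\textbf{a}(\Q)\neq\varnothing$ follows'' reverses the logic: the point is that once $\gamma_{\sqrt{\theta_\textbf{a}}}$ is shown to extend to a morphism, everywhere local solubility of $S_\textbf{a}$ transfers to $Z^{ijk}_\textbf{a}$, and Hasse--Minkowski then gives $Z^{ijk}_\textbf{a}(\Q)\neq\varnothing$. These are slips of exposition rather than a wrong method; with them corrected your outline coincides with what the cited argument does.
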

\begin{proof}
 The proof of this result follows from the proof of \cite[Prop.~3.1]{TimSantens} by appropriately changing the variables.   
\end{proof}

\begin{remark}
\label{rem: pullback of algbra is algebra from X}
Let $\mathcal{A}'_\textbf{a}\in \Br X_\textbf{a}$ denote the element constructed in \cite[Prop.~3.1.(i)]{TimSantens}, and let $\mathcal{B}_{ijk,\sqrt{\theta_\textbf{a}}}'\in \Br X_\textbf{a}$ denote the element constructed in \cite[Prop.~3.1.(ii)]{TimSantens}. Under the pullback of the map $\pi_\textbf{a}:X_\textbf{a}\rightarrow S_\textbf{a}$ we clearly have
\begin{equation*}
\pi_\textbf{a}^*\mathcal{A}_\textbf{a}=\mathcal{A}_\textbf{a}', \text{ and } \pi_\textbf{a}^*\mathcal{B}_{ijk,\sqrt{\theta_\textbf{a}}}=\mathcal{B}_{ijk,\sqrt{\theta_\textbf{a}}}'.
\end{equation*}
This fact will be used throughout the remaining sections to help simplify several proofs.   
\end{remark}

\subsection{Computing the local invariant maps}
\label{subsec: Computing the local invariant maps}
In this section we analyse the local invariant maps associated to the quaternion algebra $\mathcal{A}_\textbf{a}$. Throughout this section to simplify notation we will let $S:=S_\textbf{a}$, $\mathcal{A}:= \mathcal{A}_\textbf{a}$, $f:=f_\textbf{a}$ and $Y:=Y_\textbf{a}$.
\subsubsection{Existence of local solutions}
Before analysing the local invariant maps we provide criteria for the existence of local solution of $S$:
\begin{lemma}
\label{lem: local solubility of S}
 Let $p>33$ be a prime, then $S(\Q_p)\neq \varnothing$ if and only if one of the following holds:
 \begin{enumerate}
     \item \label{case: a_0 is a square in p-adics} $a_i\in \Q_p^{\times2}$ for some $i\in\{0,1,2\}$;
     \item \label{case: -a_0/a_1 is a 4th power in p-adics}$-\frac{a_i}{a_j}\in \Q_p^{\times4}$ for some $i\neq j\in\{0,1,2\}$;
     \item \label{case:a_0,a_1 have even class mod 4} there exists $i\neq j\in\{0,1,2\}$ and $s\in\{0,2\}$ such that $\nu_p(a_i)\equiv\nu_p(a_j)\equiv s\mod 4$;
     \item \label{case: a_0,a_1,a_3 have same class mod 4}  
     we have $\nu_p(a_0)\equiv\nu_p(a_1)\equiv\nu_p(a_2)\mod 4$.
 \end{enumerate}
\end{lemma}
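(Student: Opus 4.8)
The plan is to study solubility of $S_\textbf{a}$ over $\Q_p$ via the smooth quadric $Y_\textbf{a}$ together with a Hensel-type analysis of the fourth-power structure. First I would reduce to the quadric: since $\phi_\textbf{a}\colon S_\textbf{a}\to Y_\textbf{a}$ sends $[x_0:x_1:x_2:w]$ to $[x_0^2:x_1^2:x_2^2:w]$, a $\Q_p$-point of $S$ yields a $\Q_p$-point of $Y$ whose first three coordinates are squares, and conversely a $\Q_p$-point of $Y$ with that square property lifts to $S$. So the question becomes: when does $Y_\textbf{a}(\Q_p)$ contain a point $[y_0:y_1:y_2:y_3]$ with $y_0,y_1,y_2\in\Q_p^{\times2}\cup\{0\}$ (and $w=y_3$)? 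Since $p>33$ is large, in particular odd, one has the standard dichotomy governed by parity of valuations and by the residues of the units: a binary form $a u^2 + b v^2$ over $\Q_p$ with $p$ odd represents a target depending only on $\nu_p(a),\nu_p(b) \bmod 2$ and the classes of the unit parts mod squares. The four listed conditions are precisely a bookkeeping of these cases, now refined mod $4$ because we need the represented values to themselves be squares (the extra factor of $2$ in the weighting $\P(1,1,1,2)$, i.e.\ $x_i^2$ rather than $x_i$).

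The key steps, in order, would be: (1) After scaling, normalise so that $\nu_p(a_i)\in\{0,1,2,3\}$ for each $i$; group the coefficients by valuation class mod $4$. (2) \emph{Sufficiency}: in each of cases \ref{case: a_0 is a square in p-adics}–\ref{case: a_0,a_1,a_3 have same class mod 4}, exhibit an explicit everywhere-locally-admissible point. If $a_i$ is a square, set $x_i$ so that $a_i x_i^4 = w^2$ with the other $x_j = 0$; if $-a_i/a_j\in\Q_p^{\times4}$, take $w=0$ and solve $a_i x_i^4 = -a_j x_j^4$; if $\nu_p(a_i)\equiv\nu_p(a_j)\equiv s\bmod 4$ with $s$ even, then after factoring out $p^s$ both coefficients become units and $a_i x_i^4 + a_j x_j^4$ (with $x_k=0$) reduces to representing a square by a binary unimodular quartic form mod $p$, which I would handle by a counting/Chevalley–Warning argument over $\FF_p$ followed by Hensel's lemma — here the bound $p>33$ enters to guarantee enough $\FF_p$-points on the relevant curve (a genus-$\le 1$ or rational curve, Hasse–Weil gives $\ge p+1-2\sqrt p-(\text{small})>0$ once $p$ is past the stated threshold); and similarly in case \ref{case: a_0,a_1,a_3 have same class mod 4} all three valuations agree mod $4$ so after scaling we get a unimodular diagonal quartic threefold-slice and again count points mod $p$ and lift. (3) \emph{Necessity}: conversely, suppose $S(\Q_p)\ne\varnothing$; take a primitive integral solution and reduce mod $p$. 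A Newton-polygon / valuation argument on $a_0x_0^4 + a_1x_1^4 + a_2x_2^4 = w^2$ forces the minimal-valuation terms to cancel, and analysing which terms can be minimal and matching valuations mod $4$ (because of the fourth powers) and mod $2$ (because of $w^2$) yields exactly the four listed alternatives; I would argue by cases on how many of $x_0,x_1,x_2,w$ have the minimal valuation.

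The main obstacle I expect is the sufficiency direction in cases \ref{case:a_0,a_1 have even class mod 4} and \ref{case: a_0,a_1,a_3 have same class mod 4}: one must show that the relevant mod-$p$ variety (a diagonal quartic curve such as $u^4 + c v^4 = z^2$, equivalently a curve of small genus, or the intersection defining a point on $Y$ with square coordinates) has a smooth $\FF_p$-point, and extract the explicit numerical threshold $p>33$ from the Hasse–Weil bound together with the need to avoid the finitely many bad (non-smooth, or coordinate-hyperplane) points. Getting the constant right — rather than merely ``$p$ sufficiently large'' — is the delicate part, and I would do it by writing the count as $p + O(\sqrt p)$ with an explicit error constant (at most $6\sqrt p$ or so for these quartic curves) and checking $p - 6\sqrt p - c > 0$ for $p \ge 37$. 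The reductions to the quadric $Y_\textbf{a}$ and the valuation bookkeeping in steps (1) and (3) are routine; it is the effective point-count over $\FF_p$ that carries the real content.
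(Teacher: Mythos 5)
Your proposal matches the paper's proof in all essentials: explicit points for cases \eqref{case: a_0 is a square in p-adics} and \eqref{case: -a_0/a_1 is a 4th power in p-adics}, a Hasse--Weil point count followed by Hensel for cases \eqref{case:a_0,a_1 have even class mod 4} and \eqref{case: a_0,a_1,a_3 have same class mod 4}, and a valuation analysis of a primitive integral point for necessity. You also correctly locate the source of the threshold $p>33$ --- the genus-$3$ plane quartic $a_0x_0^4+a_1x_1^4+a_2x_2^4=0$ appearing in the odd-valuation subcase of \eqref{case: a_0,a_1,a_3 have same class mod 4}, with Hasse--Weil giving $p^{1/2}(p^{1/2}-6)+1>0$ exactly for $p>33$ --- though your initial phrase ``genus-$\le 1$ or rational curve'' slightly undersells this before your later $6\sqrt{p}$ estimate corrects it. The opening reduction to the quadric $Y_{\textbf{a}}$ (searching for $\Q_p$-points of $Y$ with square coordinates) is a valid framing but is not used in the paper, which works directly with $S$ and its reductions; note also that ``$y_0,y_1,y_2\in\Q_p^{\times 2}\cup\{0\}$'' would need to be stated in a scaling-invariant way (e.g.\ $y_iy_j\in\Q_p^{\times 2}$ for all nonzero pairs) to be meaningful in $\P^3$. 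Since you drop that framing and run the argument on $S$ anyway, the detour is harmless.
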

\begin{proof}
The proceeding proof is adapted from the proof of \cite[Lem.~3.3]{TimSantens}: Assume $S(\Q_p)\neq \varnothing$. Let $Q=[p^{n_0}\mu_0:p^{n_1}\mu_1:p^{n_2}\mu_2:p^{n_3}\mu_3]\in S(\Z_p)$, where $n_i=0$ for some $i\in\{0,1,2,3\}$ and $\mu_j\in \Z_p^{\times}$ for all $j$. By definition we have
\begin{equation}
\label{eq: local solutions eq 1}
a_0p^{4n_0}\mu_0^4+a_1p^{4n_1}\mu_1^4+a_2p^{4n_2}\mu_2^4-p^{2n_3}\mu_3^2=0.
\end{equation}
Now if $m:=2n_3\leq 4n_i +\nu_p(a_i)$ for all $i\in\{0,1,2\}$, then multiplying \eqref{eq: local solutions eq 1} by $p^{-m}$ gives
\begin{equation*}
    a_0p^{4n_0-m}\mu_0^4+a_1p^{4n_1-m}\mu_1^4+a_2p^{4n_2-m}\mu_2^4-\mu_3^2=0.
\end{equation*}
Now consider this equation modulo $p$. Since $\mu_3\in \Z_p^{\times}$, there must exists at least one $j\in\{0,1,2\}$ such that $a_jp^{4n_j-m}\mu_j^4\not\equiv 0\mod p$. If exactly one such $j$ exists, then $a_jp^{4n_j-m}$ is a square modulo $p$. In particular since $\nu_p(a_j)\in 2\Z$, by Hensel's Lemma we have $a_j\in \Q_p^{\times2}$. If more than one such $j$ exists, let $J\subseteq\{0,1,2\}$ be the subset such that $a_jp^{4n_j-m}\mu_j^4\not\equiv 0\mod p$ for all $j\in J$. For any $j\in J$ we have $4n_j+\nu_p(a_j)=2n_3$. In particular $\nu_p(a_j)\equiv \nu_p(a_k)\equiv 2n_3\mod 4$ for all $j,k\in J$.\par
Now if there exists $j\in\{0,1,2\}$ such that $m_j:=4n_j+\nu_p(a_j)\leq 4n_i +\nu_p(a_i)$ for all $i\in\{0,1,2\}$ and $m_j< 2n_3$, then multiplying \eqref{eq: local solutions eq 1} by $p^{-m_j}$ gives
\begin{equation*}
    a_0p^{4n_0-m_j}\mu_0^4+a_1p^{4n_1-m_j}\mu_1^4+a_2p^{4n_2-m_j}\mu_2^4-p^{2n_3-m_j}\mu_3^2=0.
\end{equation*}
Consider this equation modulo $p$. Since $a_jp^{4n_j-m_j}\mu_j^4\not\equiv0\mod p$ there must exist at least one other $k\in\{0,1,2\}$ such that $a_kp^{4n_k-m_j}\mu_k^4\not\equiv 0\mod p$. If exactly one such $j$ exists, then we have
\begin{equation*}
  a_jp^{4n_j-m_j}\mu_j^4\equiv-a_kp^{4n_k-m_j}\mu_k^4\mod p,  
\end{equation*}
and hence by Hensel's Lemma we have
\begin{equation*}
    -\frac{a_k}{a_j}\in\Q_p^{\times 4}.
\end{equation*}
If more than one such $k$ exists, then $a_kp^{4n_k-m_j}\mu_j^4\not\equiv 0\mod p$ for all $k\in \{0,1,2\}$. In particular we have $\nu_p(a_0)\equiv\nu_p(a_1)\equiv \nu_p(a_2)\mod 4$.\par
Now assume \eqref{case: a_0 is a square in p-adics} holds, without loss of generality assume $i=0$. Let $\gamma\in \Q_p^{\times}$ such that $\gamma^2=a_0$, then $[1:0:0:\gamma]\in S(\Q_p)$.\par
Now assume \eqref{case: -a_0/a_1 is a 4th power in p-adics} holds, without loss of generality assume $i=0$ and $j=1$. Let $\gamma\in \Q_p^{\times}$ such that $\gamma^4=\frac{-a_0}{a_1}$, then $[1:\gamma:0:0]\in S(\Q_p)$.\par
Now assume \eqref{case:a_0,a_1 have even class mod 4} holds, then by taking an equivalent surface we may assume the reduction of $S$ modulo $p$ is either
\begin{equation}
\label{eq: case 3 eq 2 for local solubility}
    a_0x_0^4+a_1x_1^4+a_2x_2^4=w^2,
\end{equation}
or the projective cone over the smooth curve given by
\begin{equation}
\label{eq: case 3 eq 1 for local solubility}
    a_1x_1^4+a_2x_2^4=w^2,
\end{equation}
where $a_i\not\equiv 0 \mod p$ for all $i$ in both cases. If the reduction is given by \eqref{eq: case 3 eq 2 for local solubility}, then the hyperplane $\{x_0=0\}$ of $S$ is exactly the smooth curve \eqref{eq: case 3 eq 1 for local solubility}. In particular, it suffices to prove \eqref{eq: case 3 eq 1 for local solubility} has a $\Q_p$-point. Now since \eqref{eq: case 3 eq 1 for local solubility} is a genus $1$ smooth geometrically irreducible curve, by the Hasse-Weil bound we have
\begin{equation*}
    \abs{S(\Fp)}\geq p^\frac{1}{2}(p^\frac{1}{2}-2)+1>0,
\end{equation*}
for $p\neq 2$. Thus by Hensel's Lemma we have $S(\Q_p)\neq \varnothing$.\par
Lastly assume \eqref{case: a_0,a_1,a_3 have same class mod 4} holds. If $\nu_p(a_i)\equiv 0,2 \mod 4$, then the proof follows identically from the argument in case \eqref{case:a_0,a_1 have even class mod 4}. If $\nu_p(a_i)\equiv\pm 1 \mod 4$, then by taking an equivalent surface we may assume the reduction of $S$ modulo $p$ is the projective cone over the smooth curve given by
\begin{equation*}
    a_0x_0^4+a_1x_1^4+a_2x_2^4=0.
\end{equation*}
where $a_i\not\equiv 0 \mod p$. In particular this is a genus $3$ smooth geometrically irreducible curve, and hence by the Hasse-Weil bound we have
\begin{equation*}
    \abs{S(\Fp)}\geq p^\frac{1}{2}(p^\frac{1}{2}-6)+1>0,
\end{equation*}
for $p>33$. Thus by Hensel's Lemma we have $S(\Q_p)\neq \varnothing$.
\end{proof}
\begin{remark}
Note that the condition $p>33$ in Lemma~\ref{lem: local solubility of S} is only required to prove the converse of \eqref{case: a_0,a_1,a_3 have same class mod 4} when $\nu_p(a_i)\neq 0\mod 4$; in all other cases this condition can be replaced by $p\neq 2$.
\end{remark}

We will now assume throughout the rest of this section that \textbf{$S$ is everywhere locally soluble}. Furthermore, for any point $Q\in S(\Q_p)$ (and any prime $p$), we will consider the representation of this point such that all the coordinates are in $\Z_p$ and at least one coordinate is in $\Z_p^{\times}$. We will also let $\{i,j,k\}=\{0,1,2\}$.
\subsubsection{Case $1$: $p\neq 2$ divides exactly one coefficient once}

\begin{lemma}
\label{thm: p odd unqie implies no BM}
If there exists a prime $p\neq 2$ such that $\nu_p(a_i)=1$ and $p\nmid a_ja_k$, then $S$ has no Brauer-Manin obstruction.    
\end{lemma}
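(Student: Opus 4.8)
The plan is to show that, at the prime $p$ of the hypothesis, the local invariant map $\inv_p(\mathcal{A}(-))\colon S(\Q_p)\to\Q/\Z$ is non-constant; since $\mathcal{A}$ has order $2$ this means it is surjective onto $\{0,\tfrac12\}$. Granting this, choose any $(s_v)_v\in S(\AA_\Q)$ (non-empty, as $S$ is everywhere locally soluble); then $\sum_v\inv_v(\mathcal{A}(s_v))\in\{0,\tfrac12\}$, and if it equals $\tfrac12$ we replace $s_p$ by a $\Q_p$-point at which $\inv_p(\mathcal{A}(-))$ takes the opposite value, obtaining a point of $S(\AA_\Q)^{\mathcal{A}}$. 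Hence $\mathcal{A}$ induces no Brauer--Manin obstruction, and since for the $\textbf{a}$ in question any Brauer--Manin obstruction for $S_\textbf{a}$ is induced by $\mathcal{A}$ (the class of $\mathcal{A}$ generates $\Br S/\Br\Q$, which is $\Z/2\Z$ or trivial, cf.\ the Brauer group computation of \cite{On_the_arithmetic_of_del_Pezzo_surfaces_of_degree_2}), it follows that $S(\AA_\Q)^{\Br S}=S(\AA_\Q)^{\mathcal{A}}\neq\varnothing$, i.e.\ $S$ has no Brauer--Manin obstruction.

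Everything thus reduces to the non-constancy of $\inv_p(\mathcal{A}(-))$. After relabelling, assume $\nu_p(a_0)=1$ and $p\nmid a_1a_2$, so $\theta_\textbf{a}=-a_0a_1a_2$ satisfies $\nu_p(\theta_\textbf{a})=1$ and is a uniformizer of $\Q_p$. Since $p$ is odd, this gives: for a unit $c\in\Z_p^{\times}$, one has $\inv_p((\theta_\textbf{a},c))=0$ exactly when the residue of $c$ is a square in $\F_p^{\times}$. It therefore suffices to exhibit $Q_1,Q_2\in S(\Q_p)$ at which $f/x_0^2$ is a $p$-adic unit whose residue modulo $p$ is, respectively, a square and a non-square. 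I would read this off the reduction: regarding $S$ over $\Z_p$ via the same quartic, its special fibre is the projective cone over the smooth genus-one curve $a_1x_1^4+a_2x_2^4=w^2$ over $\F_p$, with unique singular point the cone vertex $[1{:}0{:}0{:}0]$; a short valuation count shows that no $\Q_p$-point of $S$ (and no $\Q$-point of $Y$) reduces to this vertex, and that $S$ is smooth over $\Z_p$ away from it. Picking $P\in Y(\Q)$ (which exists by Hasse--Minkowski, as $S$ is everywhere locally soluble), the reduction of $f=\phi_\textbf{a}^{*}g_\textbf{a}$ is $\bar a_1\bar y_1x_1^2+\bar a_2\bar y_2x_2^2-\bar y_3w$ --- the $x_0^2$-term disappears since $\bar a_0=0$ --- which does not vanish identically on the special fibre, because $\bar P$ is not the vertex of $\bar Y$. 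Hence $f/x_0^2=\phi_\textbf{a}^{*}(g_\textbf{a}/t_0)$ reduces to a non-constant rational function on this (geometrically irreducible) cone.

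The remaining input is standard: a non-constant rational function on a geometrically irreducible surface over $\F_p$ attains every value of $\F_p$ outside its indeterminacy locus once $p$ is sufficiently large (apply the Lang--Weil estimate to its fibres, which are curves). In particular the reduction of $f/x_0^2$ attains a non-zero square and a non-zero non-square at $\F_p$-points of the special fibre lying in the $\Z_p$-smooth locus and having unit $x_0$-coordinate; lifting these by Hensel's lemma produces the required $Q_1,Q_2$, and by the symbol computation of the previous paragraph $\inv_p(\mathcal{A}(Q_1))=0\neq\tfrac12=\inv_p(\mathcal{A}(Q_2))$. The finitely many small primes $p$, for which the point count could be vacuous, are treated by hand, and in any case the lemma is only needed for large $p$ in \S\ref{sec: counting}.

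The step I expect to be the main obstacle is the geometric analysis in the second paragraph: pinning down the bad fibre precisely as a cone over a smooth genus-one curve, isolating its single singular point, and verifying both that local points avoid it and that the reduced tangent-plane form does not degenerate. The last of these is the crucial point, and it works exactly because reduction modulo $p$ annihilates only the $x_0^2$-coefficient of $f$, leaving a non-trivial form for every choice of $P$; the Hilbert-symbol bookkeeping and the Lang--Weil input are then routine. One should also take some care, in the first paragraph, over the passage from ``$\mathcal{A}$ induces no obstruction'' to ``$S$ has no Brauer--Manin obstruction'', which relies on knowing the full Brauer group $\Br S_\textbf{a}$ for the relevant $\textbf{a}$.
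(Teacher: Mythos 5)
The paper's proof is much shorter and takes a fundamentally different route: it verifies that the integral model $\mathcal{S}/\Z_p$ is regular with special fibre a projective cone over a smooth genus-$1$ curve, checks the cohomological hypotheses ($H^1(S,\O_S)=0$, $\Pic S_{\overline{\Q}}$ torsion-free, and $\Br S/\Br\Q$ a $2$-group of bounded size), and then cites Bright's \cite[Thm.~7.4]{BrightMartin2015BrotBadReduction} to conclude that there is no Brauer--Manin obstruction. Crucially, Bright's theorem treats \emph{all} of $\Br S$ uniformly and does not single out any specific quaternion algebra.

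Your proposal instead tries to prove surjectivity of $\inv_p(\mathcal{A}(-))$ directly, and that technical analysis is probably salvageable, but it contains a genuine gap in the step you yourself flagged at the end: the claim that $\Br S/\Br\Q$ ``is $\Z/2\Z$ or trivial'' and is generated by the class of $\mathcal{A}$. The cited computation in \cite{On_the_arithmetic_of_del_Pezzo_surfaces_of_degree_2} shows $\Br S_{\textbf{a}}/\Br\Q\cong\Z/2\Z$ only for $100\%$ of $\textbf{a}$ (density one, not all), and the same paper exhibits $\Br S/\Br\Q$ as large as $(\Z/2\Z)^3$. Moreover, even when $\Br S/\Br\Q\cong\Z/2\Z$, it can happen that $\mathcal{A}_{\textbf{a}}\in\Br\Q$ (this is explicitly dealt with in the proof of Lemma~\ref{lem: reducing N^Br to N^A}), so $\mathcal{A}$ need not generate. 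The lemma as stated must hold for every $\textbf{a}$ with the given divisibility condition, including the exceptional ones, so showing that $\mathcal{A}$ alone induces no obstruction is not sufficient. This is precisely the reason the paper reaches for Bright's bad-reduction theorem instead of an explicit invariant computation: it gives the conclusion for the whole Brauer group at once. To repair your argument you would need to run the Lang--Weil/Hilbert-symbol analysis for an explicit generating set of $\Br S/\Br\Q$ in all the exceptional cases, or, equivalently, reprove the relevant part of Bright's theorem by hand.
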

\begin{proof}
Let $\mathcal{S}$ be the integral model of $S$ over $\Z_p$ defined by the same equation. Clearly $\mathcal{S}$ is regular, and the reduction modulo $p$ is the projective cone over a smooth curve of genus $1$. By \cite[Thm.~1]{On_the_arithmetic_of_del_Pezzo_surfaces_of_degree_2} we have that $\Br S/\Br\Q$ only has $2$ and $4$-torsion and is at most $2^3$. Since $S$ is a del Pezzo surface, it is well-known that $H^1(S,\O_{S})=0$ and $\Pic S_{\overline{\Q}}$ is torsion-free (see \cite[Lem.~III.3.2.1]{RCurvesOnAVar} and \cite[Prop.~2]{On_the_arithmetic_of_del_Pezzo_surfaces_of_degree_2}, respectively). Therefore by \cite[Thm.~7.4]{BrightMartin2015BrotBadReduction} the result holds for all $p\neq 2$.   
\end{proof} 
\begin{remark}
Consider the surfaces
 \begin{equation*}
     X:103x_0^4+82297x_1^4-47x_2^4=x_3^4,
 \end{equation*}
 and
 \begin{equation*}
     S:103x_0^4+82297x_1^4-47x_2^4=w^2.
 \end{equation*}
 By \cite[Prop.~3.3]{BrightM.J.2011TBoo} we have that $X(\AA_\Q)\neq\varnothing$ (and hence $S(\AA_\Q)\neq \varnothing$) but $X(\AA_\Q)^{\Br}=\varnothing$. However, by Lemma~\ref{thm: p odd unqie implies no BM} we have that $S(\AA_\Q)^{\Br}\neq\varnothing$. In particular having a Brauer-Manin obstruction on a diagonal quartic surface does not imply a Brauer-Manin obstruction on the associated diagonal del Pezzo of degree $2$. 
\end{remark}

\subsubsection{Case $2$: $p$ odd divides exactly one coefficient to an even power}
\begin{definition}
Let the notation be as in Proposition~\ref{prop: quaternion algebra definition from BrightM.J.2011TBoo} and let $a_3:=-1$. We will say $\mathcal{A}$ is \textbf{$p$-normalised} if $a_ny_n\in\Z_p$ for all $n\in\{0,1,2,3\}$ and there exists $m\in\{0,1,2,3\}$ such that $a_my_m\in(\Z_p)^{\times}$.  
\end{definition}

\begin{lemma}
\label{lem: p odd divdes one coff evenly}
Let $p\neq 2$ be a prime such that $\nu_p(a_i)\in 2\Z$ and $p\nmid a_ja_k$. If $\mathcal{A}$ is $p$-normalised, then $\inv_p(\mathcal{A}(-))=0$.
\end{lemma}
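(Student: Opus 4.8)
The statement to prove is Lemma~\ref{lem: p odd divdes one coff evenly}: for $p \neq 2$ with $\nu_p(a_i) \in 2\ZZ$, $p \nmid a_j a_k$, and $\mathcal{A}$ $p$-normalised, the local invariant $\inv_p(\mathcal{A}(-))$ vanishes identically on $S(\QQ_p)$. The plan is to compute $\inv_p(\mathcal{A}(Q)) = \inv_p\big((\theta_\textbf{a}, f/x_0^2)(Q)\big)$ directly at an arbitrary point $Q = [x_0 : x_1 : x_2 : w] \in S(\QQ_p)$ with coordinates in $\ZZ_p$ and at least one a unit, by reducing everything modulo $p$ and invoking Hensel/the standard formula for the Hilbert symbol over $\QQ_p$ with $p$ odd: $(u, p^n v)_p$ depends only on the parities of the valuations and on the residues modulo $p$ of the unit parts (and $(u,v)_p = 1$ when both $u,v$ are units). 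So the crux is to show that along $S(\QQ_p)$ the relevant valuations and residue classes of $\theta_\textbf{a}$ and $f/x_0^2$ are forced into a configuration where the Hilbert symbol is trivial.

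First I would record the valuation bookkeeping. Since $\nu_p(a_i)$ is even, write $\nu_p(a_i) = 2e$; then $\theta_\textbf{a} = -a_0 a_1 a_2$ has $\nu_p(\theta_\textbf{a}) = 2e$ as well (as $p \nmid a_j a_k$), so up to squares $\theta_\textbf{a}$ is a $p$-adic unit, say $\theta_\textbf{a} \equiv -a_0 a_1 a_2 p^{-2e}$, and its class in $\QQ_p^\times / \QQ_p^{\times 2}$ is represented by a unit $u_\theta \in \ZZ_p^\times$. Hence $(\theta_\textbf{a}, f/x_0^2)_p = (u_\theta, f/x_0^2)_p$, and this is trivial as soon as either $f/x_0^2$ is a unit times a square, or $u_\theta$ is a square mod $p$, or the odd-valuation part of $f/x_0^2$ pairs trivially with $u_\theta$ — concretely, $(u_\theta, p^m w_0)_p = \big(\tfrac{u_\theta}{p}\big)^m$ where $w_0$ is the unit part. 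So it suffices to show: whenever $\nu_p(f/x_0^2)$ is odd, $u_\theta$ is a square modulo $p$.

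The main work — and the step I expect to be the real obstacle — is analysing the reduction of the tangent-plane form $f = a_0 y_0 x_0^2 + a_1 y_1 x_1^2 + a_2 y_2 x_2^2 - y_3 w$ at a point $Q$ of $S(\QQ_p)$, using the $p$-normalisation hypothesis to control $\nu_p(a_n y_n)$. I would split into cases according to which of $x_0, x_1, x_2, w$ are $p$-adic units (equivalently, the position of $Q$ on the special fibre $\mathcal{S}_{\FF_p}$, which — since $p \nmid a_j a_k$ and only $a_i$ is divisible by $p$ — is a cone over a genus-$1$ curve when $\nu_p(a_i) > 0$, or a smooth del Pezzo when $\nu_p(a_i) = 0$). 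Because $\mathcal{A}$ is $p$-normalised, the point $P = [y_0 : y_1 : y_2 : y_3]$ used to build $f$ reduces to a well-defined $\FF_p$-point of $Y_{\textbf{a},\FF_p}$ (the quadric $\sum a_n y_n^2 = 0$ with $a_3 = -1$), and $f/x_0^2$ is essentially the pullback under $\phi_\textbf{a}$ of the tangent hyperplane $g_\textbf{a}$ at $P$. The key geometric fact I would lean on is that $g_\textbf{a}$ reduced mod $p$ is tangent to the smooth conic/quadric $\overline{Y}_{\textbf{a}}$ at $\overline{P}$, so the reduced function $\overline{f}$ vanishes on $\overline{S}$ only along the preimage of that tangent line; if $\overline{Q}$ does not lie on this preimage then $\nu_p(f/x_0^2) = 0$ and the Hilbert symbol is trivially $1$, while if $\overline{Q}$ does lie on it, one computes the local order of vanishing and shows that the residue of $\theta_\textbf{a}$ is forced to be a square by the defining equation of $S$ restricted to that locus (the tangency condition at $\overline{P}$ forces $-a_i a_j \cdot (\text{something})^2 \equiv \theta_\textbf{a}$ type relations mod $p$). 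This is exactly the mechanism used in \cite[Lem.~3.3 \& Prop.~3.x]{TimSantens} and \cite[\S3]{BrightM.J.2011TBoo}; I would transcribe that argument with $x_3^2 = w$ substituted and the weights of $\PP(1,1,1,2)$ accounted for.

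Finally I would assemble the cases: in each, either $\nu_p(f/x_0^2)$ is even (handled immediately), or it is odd and $\left(\tfrac{u_\theta}{p}\right) = 1$ (handled by the tangency relation), so $\inv_p(\mathcal{A}(Q)) = \rho\big((u_\theta, f/x_0^2)_p\big) = 0$ in every case. Since $Q$ was arbitrary, $\inv_p(\mathcal{A}(-)) \equiv 0$. The only genuinely delicate point is ensuring the case division is exhaustive and that the order-of-vanishing computation at points of $\overline{S}$ on the bad locus is correct when $\overline{S}$ is a cone (the cone point needs separate treatment, or one argues it cannot be hit by a $p$-normalised configuration); everything else is routine Hilbert-symbol arithmetic over an odd-residue-characteristic local field.
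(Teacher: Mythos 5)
The proposal is correct and takes essentially the same approach as the paper: the paper's proof is a one-line citation of \cite[Lem.~3.6]{TimSantens} "after appropriately changing the variables," and you correctly reproduce the mechanism that lemma rests on — reduce the Hilbert symbol $(\theta_\textbf{a}, f/x_0^2)_p$ for odd $p$ to the parity of $\nu_p(f/x_0^2)$ and the quadratic character of the unit part of $\theta_\textbf{a}$, then use the $p$-normalisation and the reduction-mod-$p$ tangency geometry of $\overline{g}_\textbf{a}$ against $\overline{Y}_\textbf{a}$ (including the cone-vertex case when $\nu_p(a_i)>0$) to force the valuation to be even whenever that character is $-1$. Your write-up is in fact more expository than the paper's, which delegates the whole case analysis to the cited reference.
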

\begin{proof}
The result follows by appropriately changing the variables in \cite[Lem.~3.6]{TimSantens}.
\end{proof}

\subsubsection{Case $3$: $p$ odd divides exactly two coefficients to an odd power}
\begin{proposition}
\label{prop: p odd divides a_0 and a_1 oddly and f smooth, then iff fro surjectivity of inv_p}
Let $p\neq 2$ be a prime such that $\nu_p(a_i),\nu_p(a_j)\in \Z\setminus 2\Z$ and $p\nmid a_k$. Assume $\mathcal{A}$ is $p$-normalised. Then $\inv_p(\mathcal{A}(-))$ is a surjection if and only if  
\begin{equation*}
            \left(\frac{\theta_\textbf{a}p^{-\nu_p(\theta_\textbf{a})}}{p}\right)=-1,
\end{equation*}
If $\inv_p(\mathcal{A}(-))$ is not a surjection, then $\inv_p(\mathcal{A}(-))=0$. 
\end{proposition}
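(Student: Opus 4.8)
The plan is to compute the Hilbert symbol $\inv_p(\mathcal{A}(Q)) = \rho\big((\theta_\textbf{a}, f_\textbf{a}(Q)/x_0^2(Q))_p\big)$ directly on a carefully chosen set of local points $Q \in S(\Q_p)$, imitating the strategy of \cite[Lem.~3.6 and the surrounding analysis]{TimSantens} but adapted to the del Pezzo equation. First I would fix an equivalent integral model so that $\nu_p(a_i) = \nu_p(a_j) = 1$ and $p \nmid a_k$ (dividing out by fourth powers of $p$ affects neither $S_\textbf{a}$ up to isomorphism nor the class of $\mathcal{A}$ in $\Br S_\textbf{a}/\Br\Q$). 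Write $\theta_\textbf{a} = -a_0a_1a_2$; then $\nu_p(\theta_\textbf{a})$ is even (equal to $\nu_p(a_i)+\nu_p(a_j) = 2$ up to the normalisation), so the unit $u := \theta_\textbf{a} p^{-\nu_p(\theta_\textbf{a})} \in \Z_p^\times$ is well-defined modulo squares, and the Legendre symbol $\left(\tfrac{u}{p}\right)$ is exactly the obstruction to $\theta_\textbf{a} \in \Q_p^{\times 2}$. Since $\inv_p((\theta_\textbf{a}, -)_p)$ factors through $\Q_p^\times/(\Q_p^\times \cdot \langle \theta_\textbf{a}\rangle)$ and the image of the invariant map on $S(\Q_p)$ is a subgroup of $(\tfrac12\ZZ)/\ZZ$, it is either $0$ or all of $\tfrac12\ZZ/\ZZ$; so I only need to decide whether some $Q$ gives a nonzero value, and this can only happen when $\theta_\textbf{a} \notin \Q_p^{\times 2}$, i.e. when $\left(\tfrac{u}{p}\right) = -1$ — giving the "only if" direction and the final sentence for free.

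For the converse ("if"), assume $\left(\tfrac{u}{p}\right) = -1$, so $\Q_p(\sqrt{\theta_\textbf{a}})/\Q_p$ is the unramified quadratic extension and $\inv_p((\theta_\textbf{a}, b)_p) = \tfrac12 \nu_p(b) \bmod \ZZ$ for any $b \in \Q_p^\times$. Thus $\inv_p(\mathcal{A}(Q))$ is nonzero precisely when $\nu_p\big(f_\textbf{a}(Q)\big) - 2\nu_p(x_0(Q))$ is odd. The task is then to exhibit $Q \in S(\Q_p)$ with this valuation odd. Here I would split on which case of Lemma~\ref{lem: local solubility of S} guarantees $S(\Q_p) \neq \varnothing$: because $\nu_p(a_i), \nu_p(a_j)$ are odd and $p \nmid a_k$, Lemma~\ref{lem: local solubility of S}\eqref{case: a_0 is a square in p-adics} forces $a_k \in \Q_p^{\times 2}$ (cases \eqref{case: -a_0/a_1 is a 4th power in p-adics}, \eqref{case:a_0,a_1 have even class mod 4}, \eqref{case: a_0,a_1,a_3 have same class mod 4} cannot occur for this valuation pattern unless also $a_k$ is a square). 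Reducing $S$ mod $p$ gives (up to a square) the cone $a_k x_k^4 = w^2$, i.e. a double hyperplane; I would then use a point $Q$ in the smooth locus lying over the generic point of that hyperplane together with a Hensel/Newton-polygon analysis of \eqref{eq: local solutions eq 1} to control $\nu_p(w(Q))$ and hence $\nu_p(f_\textbf{a}(Q))$. Because $f_\textbf{a} = a_0y_0x_0^2 + a_1y_1x_1^2 + a_2y_2x_2^2 - y_3w$ is the pullback of the tangent plane at the chosen rational point $P$ of $Y_\textbf{a}$, and the class of $\mathcal{A}$ does not depend on $P$, I am free to pick $P$ conveniently — e.g. a $\Q$-point of $Y_\textbf{a}$ reducing into the non-degenerate part — so that $y_3 \in \Z_p^\times$ and the term $y_3 w$ dominates; then $\nu_p(f_\textbf{a}(Q)) = \nu_p(w(Q))$, and it remains to produce $Q$ with $\nu_p(w(Q)) - 2\nu_p(x_0(Q))$ odd.

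The main obstacle is the last step: genuinely producing a local point whose $w$-coordinate has the right parity of valuation, and simultaneously checking that the "generic" reduction-type points (where all of $\nu_p(f_\textbf{a}(Q)) - 2\nu_p(x_0(Q))$ is even) really do give $\inv_p = 0$, so that the two possibilities $\{0\}$ and $\tfrac12\ZZ/\ZZ$ are correctly distinguished. Concretely one must analyse the Newton polygon of \eqref{eq: local solutions eq 1} with $\nu_p(a_i) = \nu_p(a_j) = 1$, $\nu_p(a_k) = 0$: if $Q$ has $\nu_p(x_k) = 0$ then $w^2 \equiv a_k x_k^4 \bmod p$ forces $\nu_p(w) = 0$ and one computes $f_\textbf{a}(Q)$ is a unit, giving $\inv_p = 0$; the nonzero invariant must instead come from points where $x_k \equiv 0 \bmod p$, for which the leading balance is $a_i x_i^4 + a_j x_j^4 \equiv w^2$ with $\nu_p(a_i x_i^4 + a_j x_j^4)$ odd — this is where $\theta_\textbf{a} \notin \Q_p^{\times 2}$ enters, ruling out the cancellation that would make it even, and yields $\nu_p(w)$ half-integral-looking, forcing a point with $\nu_p(w)$ odd after rescaling. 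I would organise this as a short case analysis on $(\nu_p(x_0(Q)), \nu_p(x_1(Q)), \nu_p(x_2(Q)), \nu_p(w(Q)))$, in each case evaluating $\nu_p(f_\textbf{a}(Q)/x_0(Q)^2) \bmod 2$ and reading off $\inv_p$; the surjectivity direction needs exactly one case to survive, and $\left(\tfrac{u}{p}\right) = -1$ is precisely the condition making that case non-empty while $\left(\tfrac{u}{p}\right) = +1$ collapses it (then $\theta_\textbf{a} \in \Q_p^{\times 2}$ and $\mathcal{A}$ is split at $p$, so $\inv_p(\mathcal{A}(-)) = 0$ trivially).
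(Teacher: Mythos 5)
Your handling of the ``only if'' direction and of the final sentence is correct and matches the paper: when $\theta_\textbf{a}\in\Q_p^{\times 2}$ the Hilbert symbol $(\theta_\textbf{a},-)_p$ is identically trivial, so $\inv_p(\mathcal{A}(-))\equiv 0$, and since the image of $\inv_p(\mathcal{A}(-))$ lies in $\tfrac12\Z/\Z$ the problem reduces to deciding whether some local point gives $\tfrac12$. The ``if'' direction is where your argument diverges from the paper's and where it is incomplete.

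There are two concrete problems. First, your claim that Lemma~\ref{lem: local solubility of S} forces $a_k\in\Q_p^{\times 2}$ under these hypotheses is false. Take, say, $\nu_p(a_i)=\nu_p(a_j)=1$ with $-a_i/a_j\in\Q_p^{\times 4}$ (so local solubility holds via case~\eqref{case: -a_0/a_1 is a 4th power in p-adics}) and $a_k$ a non-square unit. Since $-a_i/a_j$ being a fourth power forces the unit part of $-a_ia_j$ to be a square in $\Z_p^\times$, we get that $\theta_\textbf{a}p^{-\nu_p(\theta_\textbf{a})}$ is a non-square \emph{precisely because} $a_k$ is not --- i.e.\ this is exactly the regime where you need surjectivity. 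So your reduction to ``$a_k$ square, reduction is a double hyperplane'' does not cover all cases, and the subsequent Newton-polygon plan is set up on a false premise. Second, even granting $a_k\in\Q_p^{\times 2}$, you stop short of a proof: you correctly identify that you must exhibit $Q$ with $\nu_p\bigl(f_\textbf{a}(Q)\bigr)-2\nu_p\bigl(x_0(Q)\bigr)$ odd, sketch a case analysis on the valuations $\bigl(\nu_p(x_0),\nu_p(x_1),\nu_p(x_2),\nu_p(w)\bigr)$, and explicitly flag this step as ``the main obstacle'' without carrying it out. What you have is a plan, not a proof.

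The paper's argument is genuinely different and considerably shorter. From the $p$-normalisation it deduces that $y_k$ and $y_3$ are $p$-adic units, so the relation $a_ky_k^2\equiv y_3^2\bmod p$ plus Hensel produces $\gamma\in\Z_p^\times$ with $\gamma^2=a_k$, hence two explicit points $Q_\pm\in S(\Q_p)$ lying on the hyperplane $x_i=x_j=0$. Using the tangent-plane identity $a_0y_0^2+a_1y_1^2+a_2y_2^2=y_3^2$, the product $-a_jf_\textbf{a}(Q_+)f_\textbf{a}(Q_-)$ collapses to a value of the norm form of $\Q_p(\sqrt{\theta_\textbf{a}})/\Q_p$, so bimultiplicativity of the Hilbert symbol gives
\begin{equation*}
\inv_p\bigl(\mathcal{A}(Q_+)\bigr)-\inv_p\bigl(\mathcal{A}(Q_-)\bigr)=\inv_p\bigl((\theta_\textbf{a},-a_j)\bigr)=\tfrac12,
\end{equation*}
and surjectivity follows with no valuation bookkeeping for arbitrary local points. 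Your Newton-polygon route could potentially be carried through and would have the virtue of applying even when $a_k$ is not a square --- the very case your Lemma~\ref{lem: local solubility of S} reduction wrongly excludes --- but as written it halts exactly where the essential computation must occur.
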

\begin{proof}
Without loss of generality assume $p$ divides $a_0$ and $a_1$. Firstly if $\left(\frac{\theta_\textbf{a}p^{-\nu_p(\theta_\textbf{a})}}{p}\right)=1$, then by the properties of the Hilbert symbol we have $\inva{Q}=0$ for all $Q\in S(\Q_p)$. Thus assume $\left(\frac{\theta_\textbf{a}p^{-\nu_p(\theta_\textbf{a})}}{p}\right)=-1$, then since $f\neq 0$ modulo $p$ we must have that $p$ does not divide $y_2$ and $y_3$. Thus we have
\begin{equation*}
    a_2y_2^2\equiv y_3^2\mod p.
\end{equation*}
Therefore by Hensel's Lemma there exists $\gamma\in \Z_p^{\times}$ such that $\gamma^2=a_2$ and $y_2\gamma\equiv y_3\mod p$, and hence we have points $Q_{\pm}=[0:0:1:\pm\gamma]\in S(\Q_p)$. Thus we have
\begin{align*}
    -a_1f(Q_+)f(Q_-)=-a_1(a_2^2y_2^2-a_2y_3^2)=a_0a_1a_2y_0^2+(a_1\gamma y_1)^2\in \operatorname{N}_{\Q_p(\theta_\textbf{a})/\Q_p}\Q_p(\theta_\textbf{a})^{\times}.
\end{align*}
Therefore by the properties of the Hilbert symbol we have
\begin{equation*}
    \inva{Q_+}=\inva{Q_-}+\inv_p((\theta_\textbf{a},-a_1))=\inva{Q_-}+\frac{1}{2}.\qedhere
\end{equation*}
\end{proof}
We now split into the subcases $p\equiv\pm 1 \mod 4$:
\begin{proposition}
\label{thm: p = 3 mod 4 and divdes two coefccients oddly implies surjective inv_p}
Let $p\equiv3\mod 4$ be a prime such that $\nu_p(a_i)=\nu_p(a_j)\in \Z\setminus 2\Z$ and $p\nmid a_k$. Then $\inv_p(\mathcal{A}(-))$ is a surjective if and only if 
\begin{equation*}
    \left(\frac{-a_ia_jp^{-\nu_p(a_ia_j)}}{p}\right)=-1.
\end{equation*}
\end{proposition}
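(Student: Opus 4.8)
The plan is to combine the preceding Proposition~\ref{prop: p odd divides a_0 and a_1 oddly and f smooth, then iff fro surjectivity of inv_p} with the local solubility criterion of Lemma~\ref{lem: local solubility of S}. Since $S$ is everywhere locally soluble (the standing assumption of this subsection), in particular $S(\Q_p)\neq\varnothing$. Because $\nu_p(a_i)=\nu_p(a_j)$ is odd while $\nu_p(a_k)=0$, the triple $\nu_p(a_0),\nu_p(a_1),\nu_p(a_2)$ cannot satisfy \eqref{case:a_0,a_1 have even class mod 4} or \eqref{case: a_0,a_1,a_3 have same class mod 4} of Lemma~\ref{lem: local solubility of S} (two of them would have to be $\equiv 0$ or $2\bmod 4$, respectively all three congruent mod $4$), so the hypothesis $p>33$ there is not needed (see the Remark following that Lemma) and we conclude that either \eqref{case: a_0 is a square in p-adics}, i.e.\ $a_k\in\Q_p^{\times 2}$, or \eqref{case: -a_0/a_1 is a 4th power in p-adics}, i.e.\ $-a_i/a_j\in\Q_p^{\times 4}$, holds.

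The elementary computation I would make first is the identity
\begin{equation*}
    \left(\frac{\theta_\textbf{a}p^{-\nu_p(\theta_\textbf{a})}}{p}\right)=\left(\frac{a_k}{p}\right)\left(\frac{-a_ia_jp^{-\nu_p(a_ia_j)}}{p}\right),
\end{equation*}
valid since $\theta_\textbf{a}=-a_ia_ja_k$, $p\nmid a_k$, and $\nu_p(\theta_\textbf{a})=\nu_p(a_ia_j)=2\nu_p(a_i)$. Suppose first $a_k\in\Q_p^{\times 2}$. After replacing $\mathcal{A}$ by a $p$-normalised representative (always possible by rescaling the auxiliary point $P\in Y_\textbf{a}(\Q)$, which changes $\mathcal{A}$ only by a constant algebra), Proposition~\ref{prop: p odd divides a_0 and a_1 oddly and f smooth, then iff fro surjectivity of inv_p} applies and gives that $\inv_p(\mathcal{A}(-))$ is surjective if and only if $\left(\frac{\theta_\textbf{a}p^{-\nu_p(\theta_\textbf{a})}}{p}\right)=-1$; since $\left(\frac{a_k}{p}\right)=1$, the identity turns this into $\left(\frac{-a_ia_jp^{-\nu_p(a_ia_j)}}{p}\right)=-1$, which is what we want.

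It remains to treat the case $a_k\notin\Q_p^{\times 2}$; then by the first paragraph we are forced into case \eqref{case: -a_0/a_1 is a 4th power in p-adics}, so write $-a_i/a_j=\delta^4$ with $\delta\in\Z_p^{\times}$. Now $-a_ia_jp^{-\nu_p(a_ia_j)}$ differs from $\delta^4$ by the square $a_j^2p^{-\nu_p(a_ia_j)}$, hence is a square and $\left(\frac{-a_ia_jp^{-\nu_p(a_ia_j)}}{p}\right)=1\neq-1$; so I must show that $\inv_p(\mathcal{A}(-))$ is \emph{not} surjective. For this, note $\theta_\textbf{a}=(\delta^2a_j)^2a_k$, so over $\Q_p$ we have $\mathcal{A}=(a_k,f/x_0^2)$; as $p\nmid a_k$ and $a_k\notin\Q_p^{\times 2}$, the field $\Q_p(\sqrt{a_k})$ is the unramified quadratic extension, whence $\inv_p(\mathcal{A}(Q))=\tfrac12$ precisely when $\nu_p(f(Q))$ is odd, and it suffices to prove $\nu_p(f(Q))\bmod 2$ is the same for all $Q\in S(\Q_p)$. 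A valuation count on the defining equation of $S$, using $p\mid a_i,a_j$, $p\nmid a_k$ and $a_k\notin\Q_p^{\times 2}$, shows that every point of $S(\Q_p)$ with coprime $\Z_p$-coordinates has $p\nmid x_i$, $p\nmid x_j$ and $p\mid x_k$, $p\mid w$. Feeding this, together with $a_ix_i^4+a_jx_j^4=a_j(x_j^2-\delta^2x_i^2)(x_j^2+\delta^2x_i^2)$ and the relation $a_iy_i^2+a_jy_j^2+a_ky_k^2=y_3^2$ coming from $P\in Y_\textbf{a}(\Q)$, into $f=a_iy_ix_i^2+a_jy_jx_j^2+a_ky_kx_k^2-y_3w$, one reduces modulo $p$ and finds that $\overline{f(Q)}$ is a nonzero scalar multiple of $\overline{x_j}^{\,2}\pm\overline{\delta}^{\,2}\overline{x_i}^{\,2}$; this is where $p\equiv 3\bmod 4$ is used, since for one sign the expression is automatically nonzero ($-1\notin\Q_p^{\times2}$), so $\nu_p(f(Q))=0$ for every $Q$ and $\inv_p(\mathcal{A}(-))\equiv 0$.

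The delicate point — and the main obstacle — is exactly this last computation in case \eqref{case: -a_0/a_1 is a 4th power in p-adics}: this is precisely the situation in which the reduction of $S$ modulo $p$ is a non-split cone and the clean tangent-plane argument underlying Proposition~\ref{prop: p odd divides a_0 and a_1 oddly and f smooth, then iff fro surjectivity of inv_p} is unavailable (indeed $Y_\textbf{a}(\Q_p)$ then has no point with $p\nmid t_k$), so the constancy of $\nu_p(f(Q))\bmod 2$ — in particular in the remaining sign case, where $\overline{f(Q)}$ can vanish — has to be extracted by hand from higher-order $p$-adic congruences linking the $\Q_p$-points of $S$ and of $Y_\textbf{a}$; alternatively, one may pass to a regular model of $S$ over $\Z_p$ and invoke the fact that an unramified Brauer class is constant on the $\Q_p$-points reducing into a geometrically connected component of the special fibre. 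Everything else is routine bookkeeping with Hilbert symbols and the results already established.
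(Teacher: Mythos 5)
Your overall structure agrees with the paper's: after WLOG $i=0,j=1,k=2$, split on whether $\left(\tfrac{a_k}{p}\right)=1$ (where Proposition~\ref{prop: p odd divides a_0 and a_1 oddly and f smooth, then iff fro surjectivity of inv_p} applies directly after re-normalising $P$) or $\left(\tfrac{a_k}{p}\right)=-1$ (where local solubility forces $-a_i/a_j\in\Q_p^{\times4}$ and one must show constancy). Your Legendre-symbol bookkeeping, the reduction $p\nmid x_i,x_j$, $p\mid x_k,w$, and the observation that over $\Q_p$ the algebra becomes $(a_k,\;f/x_0^2)$ with $\Q_p(\sqrt{a_k})/\Q_p$ unramified (so $\inv_p$ detects parity of $\nu_p(f(Q))$) are all correct and match the paper's setting.

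Where you part ways from the paper is the proof of constancy in the second case, and as you already flag, your direct argument has a gap. You show $\overline{f(Q)}$ is (up to a unit depending only on $P$, not on $Q$) equal to $\overline{x_j}^2+\epsilon\,\overline{\delta}^2\overline{x_i}^2$ with $\epsilon\in\{\pm1\}$ determined by the choice of $P$, and this is nonzero exactly when $\epsilon=+1$; the $\epsilon=-1$ case is left open. Two remarks here. First, this gap is actually closable within your own framework: replacing $P=[y_i:y_j:y_k:y_3]$ by $[y_i:-y_j:y_k:y_3]$ flips $\epsilon$ while changing $\mathcal{A}$ only by a constant algebra (so it does not affect whether $\inv_p(\mathcal{A}(-))$ is constant), and one can therefore always arrange $\epsilon=+1$; you did not make this observation, so as written the argument is incomplete. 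Second, the paper avoids the issue entirely and in a cleaner way: it first shows (via the same valuation count you performed) that every $Q\in S(\Q_p)$ reduces mod $p$ to one of the two points $\tilde R_\pm$ with $R_\pm=[1:\pm\gamma:0:0]$, then invokes \cite[Prop.~5.1]{BrightMartin2015BrotBadReduction} to conclude $\inv_p(\mathcal{A}(Q))\in\{\inv_p(\mathcal{A}(R_-)),\inv_p(\mathcal{A}(R_+))\}$, and finally notes that $f(R_+)=f(R_-)$ (because $f$ evaluated at $[1:\pm\gamma:0:0]$ only sees $\gamma^2$), so the two possible values coincide. This is essentially the ``regular model'' route you name as an alternative, but the crucial supplementary observation $f(R_+)=f(R_-)$ — which is what actually yields constancy — is missing from your sketch. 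So: right skeleton, right key lemma recognised, but the constancy step as you wrote it is not yet a proof; you need either the $P$-flip trick or the Bright-plus-$f(R_+)=f(R_-)$ argument that the paper uses.
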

\begin{proof}
 Without loss of generality assume $p$ divides $a_0$ and $a_1$ such that $\nu_p(a_0)\in\{1,3\}$. If $\left(\frac{a_2}{p}\right)=1$, then after possibly changing rational point $P$ so that $\mathcal{A}$ is $p$-normalised, the result follows by Proposition~\ref{prop: p odd divides a_0 and a_1 oddly and f smooth, then iff fro surjectivity of inv_p}. Thus assume $\left(\frac{a_2}{p}\right)=-1$. Since $S(\Q_p)\neq \varnothing$, by Lemma~\ref{lem: local solubility of S} we must have $\left(\frac{-a_0a_jp^{-\nu_p(a_ia_j)}}{p}\right)=1$. Thus it suffices to prove if $\left(\frac{a_2}{p}\right)=-1$, then $\inv_p(\mathcal{A}(-))$ is constant.\par
 By Lemma~\ref{lem: local solubility of S} there exists $\gamma\in \Z_p^{\times}$ such that $\gamma^4= -\frac{a_0}{a_1}$. Let $R_\pm=[1:\pm \gamma:0:0]\in S(\Q_p)$. For any $Q=[x_0:x_1:x_2:w]\in S(\Q_p)$, we must have that $p$ divides $x_2$ and $w$, and hence we have
\begin{equation*}
    a_0p^{-\nu_p(a_0)}x_0^4\equiv -a_1p^{-\nu_p(a_1)}x_1^4 \mod p,
\end{equation*}
 where $p$ does not divide $x_0$ or $x_1$. In particular, the reduction modulo $p$ of any $Q\in S(\Q_p)$, denoted $\Tilde{Q}$, is contained in the set $\{R_\pm\}$.  Let $\mathcal{S}$ be the integral model of $S$ over $\Z_p$ defined by the same equation, then we have $\mathcal{S}^{sm}(\Z_p)=S(\Z_p)$. Therefore by \cite[Prop.~5.1]{BrightMartin2015BrotBadReduction} we have
 \begin{equation*}
     \inva{Q}\in\{\inva{R_-},\inva{R_+}\},
 \end{equation*}
 for any $Q\in S(\Z_p)$. In particular since $f(R_+)=f(R_-)$, the invariant map is constant.
\end{proof}
\begin{proposition}
\label{thm: p odd divides two coefficents to the same odd power implies inv_p surjective}
 Let $p\equiv1\mod 4$ be a prime such that $\nu_p(a_i)=\nu_p(a_j)\in \Z\setminus 2\Z$ and $p\nmid a_k$. Then $\inv_p(\mathcal{A}(-))$ is constant if and only if 
 \begin{equation*}
     \left(\frac{-a_ia_jp^{-\nu_p(a_ia_j)}}{p}\right)=\left(\frac{a_k}{p}\right)=1.
 \end{equation*}
\end{proposition}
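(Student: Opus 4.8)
The plan is to mirror the proof of Proposition~\ref{thm: p = 3 mod 4 and divdes two coefccients oddly implies surjective inv_p}, splitting on the value of $\left(\frac{a_k}{p}\right)$, and to exploit that $p\equiv 1 \mod 4$ puts a primitive fourth root of unity into $\Q_p$. Take $(i,j,k)=(0,1,2)$ without loss of generality and write $\theta:=\theta_\textbf{a}=-a_0a_1a_2$. Since $\nu_p(a_0)=\nu_p(a_1)$ is odd, $\nu_p(\theta)=2\nu_p(a_0)$ is even, $\theta p^{-\nu_p(\theta)}$ is a $p$-adic unit, and
\[
\left(\frac{\theta p^{-\nu_p(\theta)}}{p}\right)=\left(\frac{-a_0a_1p^{-\nu_p(a_0a_1)}}{p}\right)\left(\frac{a_2}{p}\right).
\]
Because replacing the rational point $P$ used to construct $\mathcal{A}$ changes $\mathcal{A}$ only by an element of $\Br\Q$, and hence shifts $\inv_p(\mathcal{A}(-))$ by a global constant, whether $\inv_p(\mathcal{A}(-))$ is constant (equivalently, not surjective) does not depend on $P$; I shall pick $P$ conveniently in each of the two cases.

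Suppose first that $\left(\frac{a_2}{p}\right)=1$. Then $S$ everywhere locally soluble makes $Y$ everywhere locally soluble, so $Y(\Q)\neq\varnothing$ by Hasse-Minkowski, and $Y$, being a smooth quadric with a rational point, satisfies weak approximation; since $[0:0:1:\sqrt{a_2}]\in Y(\Q_p)$ by Hensel's Lemma, I may choose $P=[y_0:y_1:y_2:y_3]\in Y(\Q)$ with $a_2y_2\in\Z_p^\times$ and all $a_ny_n\in\Z_p$, so that $\mathcal{A}$ is $p$-normalised. Proposition~\ref{prop: p odd divides a_0 and a_1 oddly and f smooth, then iff fro surjectivity of inv_p} then shows $\inv_p(\mathcal{A}(-))$ is surjective exactly when $\left(\frac{\theta p^{-\nu_p(\theta)}}{p}\right)=-1$, and is identically $0$ otherwise; by the displayed identity and $\left(\frac{a_2}{p}\right)=1$ this is exactly the assertion that $\inv_p(\mathcal{A}(-))$ is constant iff $\left(\frac{-a_0a_1p^{-\nu_p(a_0a_1)}}{p}\right)=\left(\frac{a_2}{p}\right)=1$.

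Suppose now that $\left(\frac{a_2}{p}\right)=-1$. As $S(\Q_p)\neq\varnothing$, one of the four cases of Lemma~\ref{lem: local solubility of S} holds; the valuation hypotheses together with $\left(\frac{a_2}{p}\right)=-1$ exclude cases (1), (3) and (4), leaving case (2), namely $-a_0/a_1\in\Q_p^{\times 4}$, which in particular forces $\left(\frac{-a_0a_1p^{-\nu_p(a_0a_1)}}{p}\right)=1$. So the asserted equality $\left(\frac{-a_0a_1p^{-\nu_p(a_0a_1)}}{p}\right)=\left(\frac{a_2}{p}\right)=1$ fails, and it suffices to prove $\inv_p(\mathcal{A}(-))$ is surjective. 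Write $a_0=-a_1\gamma^4$ with $\gamma\in\Z_p^\times$ and fix $\zeta\in\Z_p^\times$ with $\zeta^2=-1$ (available because $p\equiv 1 \mod 4$); then $R_0:=[1:\gamma:0:0]$ and $R_1:=[1:\zeta\gamma:0:0]$ both lie in $S(\Q_p)$. Choosing $P\in Y(\Q)$ with $(y_2,y_3)\neq(0,0)$---possible since $Y(\Q)$ is infinite while $Y$ meets the line $\{t_2=t_3=0\}$ in at most two points---one computes, using $\gamma^4=-a_0/a_1$ and the defining equation of $Y$,
\[
f(R_0)f(R_1)=a_0\left(a_0y_0^2+a_1y_1^2\right)=a_0\left(y_3^2-a_2y_2^2\right).
\]
A short analysis of $\nu_p(y_2)$ and $\nu_p(y_3)$---whose only nontrivial subcase, $\nu_p(y_2)=\nu_p(y_3)$ finite, uses that $a_2$ is a unit which is not a square modulo $p$---shows $\nu_p(y_3^2-a_2y_2^2)$ is even, and since $\nu_p(a_0)$ is odd we conclude $\nu_p(f(R_0)f(R_1))$ is odd; in particular $f(R_0),f(R_1)\neq 0$. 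Finally, $\left(\frac{\theta p^{-\nu_p(\theta)}}{p}\right)=1\cdot(-1)=-1$ means $\Q_p(\sqrt{\theta})/\Q_p$ is the unramified quadratic extension, so $\inv_p((\theta,b))=\frac{1}{2}$ when $\nu_p(b)$ is odd and $0$ when $\nu_p(b)$ is even; as $x_0=1$ at $R_0$ and $R_1$ this gives $\inva{R_m}=\frac{1}{2}$ precisely when $\nu_p(f(R_m))$ is odd, and since $\nu_p(f(R_0))+\nu_p(f(R_1))$ is odd, exactly one of $\inva{R_0},\inva{R_1}$ equals $\frac{1}{2}$. Hence $\inv_p(\mathcal{A}(-))$ is surjective.

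The crux is the case $\left(\frac{a_2}{p}\right)=-1$, and it is here that the hypothesis $p\equiv 1 \mod 4$ is used: for $p\equiv 3 \mod 4$ one has only the two points $[1:\pm\gamma:0:0]$, on which $f$ takes a single value, and the invariant map is then in fact constant (this is the content of Proposition~\ref{thm: p = 3 mod 4 and divdes two coefccients oddly implies surjective inv_p}); the fourth root of unity $\zeta$ is exactly what produces a second point $R_1$ at which the invariant flips. The remaining items---that only case (2) of Lemma~\ref{lem: local solubility of S} can occur, the existence of $P$ with $f(R_0),f(R_1)\neq 0$, and the parity of $\nu_p(y_3^2-a_2y_2^2)$---are routine.
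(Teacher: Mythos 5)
Your proof is correct and follows essentially the same approach as the paper: the same case split on $\left(\frac{a_k}{p}\right)$, the same appeal to Proposition~\ref{prop: p odd divides a_0 and a_1 oddly and f smooth, then iff fro surjectivity of inv_p} after re-normalising $P$ when $a_k$ is a square, and the same two points $[1:\gamma:0:0]$, $[1:\zeta\gamma:0:0]$ when $a_k$ is a non-square. The only cosmetic difference is in the final step: the paper exhibits $-a_1f(R_+)f(R'_+)=a_0a_1a_2y_2^2-a_0a_1y_3^2$ as an explicit norm from $\Q_p(\sqrt{\theta_\textbf{a}})$ and uses $\inv_p((\theta_\textbf{a},-a_1))=\tfrac12$, whereas you show directly that $\nu_p\bigl(f(R_0)f(R_1)\bigr)$ is odd and invoke that $\Q_p(\sqrt{\theta_\textbf{a}})/\Q_p$ is unramified; these are equivalent computations of the same Hilbert symbol, and your version has the small merit of making explicit the need to choose $P$ with $(y_2,y_3)\neq(0,0)$.
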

\begin{proof}
Without loss of generality assume $i=0$, $j=1$ and $k=2$. Firstly if $\left(\frac{a_2}{p}\right)=1$, then after possibly changing rational point $P$ so that $\mathcal{A}$ is $p$-normalised, the result follows by Proposition~\ref{prop: p odd divides a_0 and a_1 oddly and f smooth, then iff fro surjectivity of inv_p}. Therefore assume $\left(\frac{a_2}{p}\right)=-1$, then by Lemma~\ref{lem: local solubility of S} we must have that $-\frac{a_0}{a_1}\in \Q_p^{\times4}$. Let $\gamma,\delta \in \Z_p^{\times}$ such that $\gamma^4=-\frac{a_0}{a_1}$ and $\delta^2=-1$, then we have four points $R_{\pm}=[1:\pm\gamma:0:0],R'_{\pm}=[1:\pm \delta\gamma:0:0]\in S(\Q_p)$. Furthermore, we have
\begin{align*}
    -a_1f(R_+)f(R'_+)=-a_1(a_0^2y_0^2+a_0a_1y_1^2)=a_0a_1a_2y_2^2-a_0a_1y_3^2\in \operatorname{N}_{\Q_p(\theta_\textbf{a})/\Q_p}\Q_p(\theta_\textbf{a})^{\times}.
\end{align*}
Thus we have
\begin{equation*}
    \inva{R_+}=\inva{R'_+}+\inv_p((\theta_\textbf{a},-a_1))=\inva{R'_+}+\frac{1}{2}.\qedhere
\end{equation*}
\end{proof}

\subsubsection{Changing $\mathcal{A}_\textbf{a}$ by $\textbf{u}\in(\Q^{\times})^{3}$}
In this section we analyse how the local invariant map changes when the surface $S_\textbf{a}$ changes.\\
Let $\textbf{u}=(u_0,u_1,u_2)\in(\Q^{\times})^{3}$ and let $\textbf{au}^2=(a_0u_0^2,a_1u_1^2,a_2u_2^2)$. By repeating the construction of $\mathcal{A}_\textbf{a}$ given in Proposition~\ref{prop: quaternion algebra definition from BrightM.J.2011TBoo} for $\textbf{au}^2$ we have
\begin{equation*}
    \mathcal{A}_{\textbf{au}^2}=\left(\theta_\textbf{a},\frac{u_0a_0y_0x_0^2+u_1a_1y_1x_1^2+u_2a_2y_2x_2^2-y_3w}{x_0^2}\right)\in \Br S_{\textbf{au}^2}.
\end{equation*}
The similarity between $\mathcal{A}_\textbf{a}$ and $\mathcal{A}_{\textbf{au}^2}$ allows us to express the associated invariant maps in terms of each other in certain cases (which we will use in \S\ref{sec: counting}):

\begin{lemma}
\label{lem: formula for when p divdies w_ij}
Let $p\equiv 3 \mod 4$, $\theta_\textbf{a}\not\in\Q_p^{\times2}$, $\nu_p(a_i)=\nu_p(a_{j})\in\Z\setminus 2\Z$ and $p\nmid a_k$. Assume that $-\frac{a_i}{a_j}\in \Q_p^{\times 4}$. Then $\inv_p(\mathcal{A}_{\textbf{a}}(-))$ is constant and for all $\textbf{u}\in(\Z_p^{\times})^{3}\cap (\Q^{\times})^{3}$ we have
\begin{equation}
\label{eq: invarian maps formula at p=3mod4}
    \inv_p(\mathcal{A}_{\textbf{au}^2}(-))=\inv_p(\mathcal{A}_{\textbf{a}}(-)) +\frac{\left(\frac{u_iu_{j}}{p}\right)-1}{4}.
\end{equation}
\end{lemma}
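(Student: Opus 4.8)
The plan is to reduce the statement to a computation with the Hilbert symbol, exploiting the explicit similarity between $\mathcal{A}_{\textbf{a}}$ and $\mathcal{A}_{\textbf{au}^2}$ recorded just before the lemma. First I would observe that under the hypotheses $p\equiv 3\bmod 4$, $\nu_p(a_i)=\nu_p(a_j)$ odd, $p\nmid a_k$, and $-a_i/a_j\in\Q_p^{\times 4}$, Lemma~\ref{lem: local solubility of S} (case~\eqref{case: -a_0/a_1 is a 4th power in p-adics}) guarantees $S(\Q_p)\neq\varnothing$, and the chain of arguments in the proof of Proposition~\ref{thm: p = 3 mod 4 and divdes two coefccients oddly implies surjective inv_p} applies: since $p\equiv 3\bmod 4$ exactly one of $\left(\tfrac{a_k}{p}\right)$ and $\left(\tfrac{-a_ia_jp^{-\nu_p(a_ia_j)}}{p}\right)$ can be $-1$, and because $-a_i/a_j$ is a fourth power we are forced into the case $\left(\tfrac{-a_ia_jp^{-\nu_p(a_ia_j)}}{p}\right)=1$, hence (by that proposition) $\inv_p(\mathcal{A}_{\textbf{a}}(-))$ is constant. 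The same holds for $\textbf{au}^2$ since $\textbf{u}\in(\Z_p^\times)^3$ does not change any $p$-adic valuations or the relevant Legendre symbols, so $\inv_p(\mathcal{A}_{\textbf{au}^2}(-))$ is also constant; it therefore suffices to evaluate both invariants at a single convenient point.

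Next I would pick the explicit point used before: let $\gamma\in\Z_p^\times$ with $\gamma^4=-a_i/a_j$ and take $R=[{\cdots}:1:\pm\gamma:{\cdots}:0:0]$ (entries $1$ in slot $i$, $\pm\gamma$ in slot $j$, $0$ elsewhere), which lies in $S_{\textbf{a}}(\Q_p)$; note $R$ also lies in $S_{\textbf{au}^2}(\Q_p)$ after replacing $\gamma$ by $\gamma u_i^{-1/2}u_j^{1/2}$ suitably, or more cleanly one rescales coordinates so the same form of point works. Evaluating $f_{\textbf{a}}$ and $f_{\textbf{au}^2}$ at this point gives $f_{\textbf{a}}(R)/x_i^2 = a_iy_i + a_jy_j\gamma^2$ (up to the chosen tangent hyperplane constants $y_n$) and the analogous $f_{\textbf{au}^2}(R)/x_i^2 = u_ia_iy_i + u_ja_jy_j\gamma'^2$. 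The invariant is then $\inv_p\big((\theta_{\textbf{a}}, f/x_i^2)\big) = \rho\big((\theta_{\textbf{a}}, f(R)/x_i^2)_p\big)$, and since $\theta_{\textbf{a}}\notin\Q_p^{\times 2}$ and $p\equiv 3\bmod 4$, the Hilbert symbol $(\theta_{\textbf{a}}, c)_p$ depends only on $\nu_p(c)$ together with the class of the unit part modulo squares when $\nu_p(\theta_{\textbf{a}})$ is odd; the bookkeeping then shows the difference of the two invariants is governed precisely by the ratio $f_{\textbf{au}^2}(R)/f_{\textbf{a}}(R)$, whose valuation/unit-class contributes exactly $\left(\tfrac{u_iu_j}{p}\right)$ after accounting for the $u_0u_1u_2$-type factors cancelling against the change $\theta_{\textbf{au}^2}=\theta_{\textbf{a}}\,(u_0u_1u_2)^2$.

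The key computational step is therefore: show $\inv_p(\mathcal{A}_{\textbf{au}^2}(R)) - \inv_p(\mathcal{A}_{\textbf{a}}(R)) = \rho\big((\theta_{\textbf{a}}, \lambda)_p\big)$ where $\lambda\in\Q_p^\times$ is a unit whose image in $\Q_p^\times/\Q_p^{\times 2}$ is determined by $u_iu_j$ (the $u_k$ and the square factors drop out), and then that $\rho((\theta_{\textbf{a}},\lambda)_p) = \tfrac14\big(\left(\tfrac{u_iu_j}{p}\right)-1\big)$, i.e. it is $0$ when $u_iu_j$ is a square mod $p$ and $\tfrac12$ otherwise — this uses $\nu_p(\theta_{\textbf{a}})$ being odd (because $\nu_p(a_i)+\nu_p(a_j)$ is even but $\nu_p(a_ia_ja_k)=\nu_p(a_i)+\nu_p(a_j)$ with... ) so that pairing a unit with $\theta_{\textbf{a}}$ detects exactly its quadratic residue symbol. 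I would verify the valuation parity of $\theta_{\textbf{a}}$ carefully here, as it is the linchpin: $\nu_p(\theta_{\textbf{a}}) = \nu_p(a_i)+\nu_p(a_j)+\nu_p(a_k)$, and with $\nu_p(a_i),\nu_p(a_j)$ both odd and $\nu_p(a_k)=0$ this sum is even, so one must instead use $\theta_{\textbf{a}}p^{-\nu_p(\theta_{\textbf{a}})}$ being a non-square unit and argue via $(\text{unit non-square},\,\text{unit})_p = \left(\tfrac{\text{unit}}{p}\right)$.

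The main obstacle I anticipate is precisely this last Hilbert-symbol reduction and the careful tracking of which unit classes survive: since $\mathcal{A}_{\textbf{au}^2}$ uses a possibly different base point $P'\in Y_{\textbf{au}^2}(\Q)$ than $\mathcal{A}_{\textbf{a}}$, one must either choose the base points compatibly or invoke the ``independent of the choice of $P$'' clause of Proposition~\ref{prop: quaternion algebra definition from BrightM.J.2011TBoo} to normalise, and then show the residual discrepancy is exactly the asserted term and not, say, off by a sign or by a $\left(\tfrac{u_k}{p}\right)$ factor. Once the base points are pinned down and $\theta_{\textbf{a}}p^{-\nu_p(\theta_{\textbf{a}})}$ is confirmed to be a non-square unit mod $p$, the computation is a short manipulation of Hilbert symbols together with the multiplicativity $\left(\tfrac{u_iu_j}{p}\right)=\left(\tfrac{u_i}{p}\right)\left(\tfrac{u_j}{p}\right)$, and the formula \eqref{eq: invarian maps formula at p=3mod4} follows. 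I would also double-check the edge case where $u_iu_j\equiv\Box$ forces the difference to vanish, which is consistent with $\mathcal{A}_{\textbf{au}^2}$ and $\mathcal{A}_{\textbf{a}}$ then differing by an element of $\Br\Q$.
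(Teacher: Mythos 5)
Your approach is genuinely different from the paper's. The paper dispatches the lemma in three lines: it first invokes Proposition~\ref{thm: p = 3 mod 4 and divdes two coefccients oddly implies surjective inv_p} for constancy (as you do), and then, rather than attempting any Hilbert-symbol computation, it pulls back along $\pi_{\textbf{au}^2}:X_{\textbf{au}^2}\to S_{\textbf{au}^2}$ using Remark~\ref{rem: pullback of algbra is algebra from X} (that $\pi_\textbf{a}^*\mathcal{A}_\textbf{a}=\mathcal{A}_\textbf{a}'$) and applies the corresponding result for diagonal quartic K3 surfaces, \cite[Prop.~3.8]{TimSantens}, where the formula is already established. Since both invariant maps are constant, the identity pushed forward from $X$-points to $S$-points is enough. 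This is a much shorter and more robust route than a from-scratch computation.

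Your proposed computation has a concrete gap in the third paragraph. You correctly note that $\nu_p(\theta_{\textbf{a}})=\nu_p(a_i)+\nu_p(a_j)+\nu_p(a_k)$ is \emph{even} (odd $+$ odd $+$ $0$), but your proposed fix — ``argue via $(\text{unit non-square},\,\text{unit})_p=\left(\tfrac{\text{unit}}{p}\right)$'' — is false. For an odd prime $p$, the Hilbert symbol $(a,b)_p$ is \emph{identically} $1$ whenever both arguments lie in $\Z_p^\times$; writing $a=p^\alpha u$, $b=p^\beta v$, one has $(a,b)_p=(-1)^{\alpha\beta\frac{p-1}{2}}\left(\tfrac{u}{p}\right)^\beta\left(\tfrac{v}{p}\right)^\alpha$, and with $\alpha=\beta=0$ this is $1$ regardless of whether $u$ is a square. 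Consequently, pairing $\theta_{\textbf{a}}$ (even valuation) against a unit can never detect the quadratic residue class of that unit; the symbol $(\theta_{\textbf{a}},\lambda)_p$ depends only on the parity of $\nu_p(\lambda)$. So the discrepancy $\left(\tfrac{u_iu_j}{p}\right)$ cannot arise in the way your sketch suggests; it must enter through a shift in the $p$-adic valuation of the relevant tangent-form evaluations when one compares the (necessarily different) evaluation points on $S_\textbf{a}$ and $S_{\textbf{au}^2}$ — note that $[1:\gamma:0:0]$ need not lift to $S_{\textbf{au}^2}(\Q_p)$ because $(u_i/u_j)^2$ need not be a fourth power. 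Tracking this correctly is exactly the delicate bookkeeping that the paper sidesteps by citing \cite[Prop.~3.8]{TimSantens}. If you want to keep a direct proof, you would have to redo that bookkeeping on $S_\textbf{a}$; as written your plan would produce $0$ for the correction term.
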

\begin{proof}
Without loss of generality assume $i=0$ and $j=1$, then by Proposition~\ref{thm: p = 3 mod 4 and divdes two coefccients oddly implies surjective inv_p} it suffices to prove that \eqref{eq: invarian maps formula at p=3mod4} holds.\par
Let $\mathcal{A}'_\textbf{a}$ and $\mathcal{A}'_{\textbf{au}^2}$ denote the algebras $\mathcal{A}$ and $\mathcal{A}_{\textbf{u}}$ in \cite[Prop.~3.8]{TimSantens}, then by \cite[Prop.~3.8]{TimSantens} we have
\begin{align*}
    \inv_p(\mathcal{A}_{\textbf{au}^2}(\pi_{\textbf{au}^2}(-)))&=\inv_p(\mathcal{A}'_{\textbf{au}^2}(-))\\
    &=\inv_p(\mathcal{A}'_{\textbf{a}}(-)) +\frac{\left(\frac{u_iu_{j}}{p}\right)-1}{4}\\
    &=\inv_p(\mathcal{A}_{\textbf{a}}(\pi_{\textbf{au}^2}(-))) +\frac{\left(\frac{u_iu_{j}}{p}\right)-1}{4}.
\end{align*}
In particular since both invariant maps are constant \eqref{eq: invarian maps formula at p=3mod4} holds.
\end{proof}

\begin{lemma}
\label{lem: formula for moving between A and A_u}   
Let $p$ be any place of $\QQ$. Let $\textbf{u}\in (\Q^{\times})^{3}$, and let $\textbf{v}\in (\Z_p^{\times})^{3}$ such that $u_n=v_n^2$ for all $n$. Then
\begin{equation*}
    \inv_p(\mathcal{A}_\textbf{a}(-))=\inv_p(\mathcal{A}_{\textbf{au}^2}(\phi(-))),
\end{equation*}
where $\phi:S_\textbf{a}(\Z_p) \rightarrow S_{\textbf{au}^2}(\Z_p)$ is the bijection given by $[x_0:x_1:x_2:w]\mapsto [x_0v_0^{-1}:x_1v_1^{-1}:x_2v_2^{-1}:w]$.
\end{lemma}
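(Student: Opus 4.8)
The plan is to reduce the identity to a direct substitution check at the level of the quaternion algebras, exploiting that $\mathbf{v}\in(\Z_p^\times)^3$ makes $\phi$ a well-defined bijection on $\Z_p$-points. First I would verify that $\phi$ is indeed a bijection $S_{\mathbf a}(\Z_p)\to S_{\mathbf a\mathbf u^2}(\Z_p)$: if $a_0x_0^4+a_1x_1^4+a_2x_2^4=w^2$ with all coordinates in $\Z_p$ and one a unit, then setting $x_n'=x_nv_n^{-1}$ gives $a_0u_0^2(x_0')^4+\cdots = a_0x_0^4+\cdots = w^2$, and since $v_n$ is a $p$-adic unit the new coordinates still lie in $\Z_p$ with one a unit; the inverse is $[x_0':x_1':x_2':w]\mapsto[x_0'v_0:x_1'v_1:x_2'v_2:w]$, so $\phi$ is a bijection. (A small caveat: one should also check $\phi$ is compatible with the normalised representatives — i.e. that a unit coordinate is sent to a unit coordinate — which is immediate since multiplication by a unit preserves $\Z_p^\times$.)

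Next I would compute $\mathcal A_{\mathbf a\mathbf u^2}$ evaluated at $\phi(Q)$ for $Q=[x_0:x_1:x_2:w]\in S_{\mathbf a}(\Z_p)$. Using the explicit formula for $\mathcal A_{\mathbf a\mathbf u^2}$ recalled just before the statement, the second slot is
\begin{equation*}
\frac{u_0a_0y_0(x_0v_0^{-1})^2+u_1a_1y_1(x_1v_1^{-1})^2+u_2a_2y_2(x_2v_2^{-1})^2-y_3w}{(x_0v_0^{-1})^2}.
\end{equation*}
Since $u_n=v_n^2$, we have $u_n(x_nv_n^{-1})^2=x_n^2$, so the numerator equals $a_0y_0x_0^2+a_1y_1x_1^2+a_2y_2x_2^2-y_3w=f_{\mathbf a}(Q)$, while the denominator is $v_0^{-2}x_0^2=u_0^{-1}x_0^2$. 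Hence
\begin{equation*}
\mathcal A_{\mathbf a\mathbf u^2}(\phi(Q))=\left(\theta_{\mathbf a},\,u_0^{-1}\cdot\frac{f_{\mathbf a}(Q)}{x_0^2}\right)=\left(\theta_{\mathbf a},u_0^{-1}\right)+\left(\theta_{\mathbf a},\frac{f_{\mathbf a}(Q)}{x_0^2}\right)
\end{equation*}
in $\Br\Q_p$, using bi-additivity of the quaternion symbol (and noting $\theta_{\mathbf a}=\theta_{\mathbf a\mathbf u^2}$ since the $u_n^2$ cancel in the product $-a_0a_1a_2$). Because $u_0=v_0^2$ is a square in $\Q_p^\times$, the symbol $(\theta_{\mathbf a},u_0^{-1})$ is trivial, so $\mathcal A_{\mathbf a\mathbf u^2}(\phi(Q))=\mathcal A_{\mathbf a}(Q)$ in $\Br\Q_p$, and applying $\inv_p$ gives the claim.

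This argument is essentially computational and I do not anticipate a genuine obstacle; the only points requiring mild care are (i) confirming that the formula for $\mathcal A_{\mathbf a\mathbf u^2}$ from Proposition~\ref{prop: quaternion algebra definition from BrightM.J.2011TBoo} is valid with the \emph{same} auxiliary rational point data $[y_0:y_1:y_2:y_3]$ — one may simply fix one choice of $P\in Y_{\mathbf a}(\Q)$ and use the corresponding point for $Y_{\mathbf a\mathbf u^2}$, since the class in $\Br S/\Br\Q$ is independent of the choice and $\inv_p$ is insensitive to adding constant algebras; and (ii) ensuring the evaluation $f_{\mathbf a}(Q)/x_0^2$ is a well-defined nonzero element of $\Q_p^\times$, which holds on the locus where $x_0\neq0$ and otherwise is handled by the standard trick of replacing $x_0^2$ by another coordinate (as in the proof of Proposition~\ref{prop: quaternion algebra definition from BrightM.J.2011TBoo}), the two representatives differing by a constant algebra that $\inv_p$ kills. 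Alternatively, the whole statement can be deduced from Remark~\ref{rem: pullback of algbra is algebra from X} together with the analogous identity for $X_{\mathbf a}$ in \cite{TimSantens}, exactly as in the proof of Lemma~\ref{lem: formula for when p divdies w_ij}; I would mention this as the quickest route if the reader prefers to avoid re-doing the computation.
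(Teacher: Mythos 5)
Your proof is correct and is exactly the substitution argument that the paper leaves implicit (its entire proof is ``The result is immediate''); you have simply written out the computation. Two small remarks: in the denominator you obtain $(x_0v_0^{-1})^2 = u_0^{-1}x_0^2$, so the second slot is $u_0\cdot f_{\mathbf a}(Q)/x_0^2$ rather than $u_0^{-1}\cdot f_{\mathbf a}(Q)/x_0^2$ — immaterial here since $(\theta_{\mathbf a},u_0)=(\theta_{\mathbf a},u_0^{-1})=0$ as $u_0$ is a square in $\Q_p^\times$. Also, the parenthetical in your point (i) that ``$\inv_p$ is insensitive to adding constant algebras'' is loosely phrased: adding $c\in\Br\Q$ shifts $\inv_p(\mathcal A(-))$ by the constant $\inv_p(c)$, which need not vanish; what actually matters is that the displayed formula for $\mathcal A_{\mathbf{au}^2}$ in the paper already fixes the auxiliary point on $Y_{\mathbf{au}^2}$ to be the one induced by the chosen $P\in Y_{\mathbf a}(\Q)$ (namely $[y_0/u_0:y_1/u_1:y_2/u_2:y_3]$), and your computation uses precisely that formula, so the constants match and no appeal to class-level independence is needed.
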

\begin{proof}
The result is immediate.
\end{proof}

\begin{lemma}
\label{lem: A is p-normalised if and only if A_u is p-normalised}   
Let $a_0,a_1,a_2,u_i,u_j\in \Z_p^{\times}$ and $u_k\in \Z_p$. Let $\textbf{u}=(u_0,u_1,u_2)$, then $\mathcal{A}_\textbf{a}$ is $p$-normalised if and only if $\mathcal{A}_{\textbf{au}^2}$ is $p$-normalised.
\end{lemma}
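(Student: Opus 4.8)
The plan is to express both $p$-normalisation conditions in terms of the single point $P=[y_0:y_1:y_2:y_3]\in Y_\textbf{a}(\Q)$ used to construct $\mathcal{A}_\textbf{a}$, and then to lean on the defining equation of the quadric $Y_\textbf{a}$. By Proposition~\ref{prop: quaternion algebra definition from BrightM.J.2011TBoo} and the explicit description of $\mathcal{A}_{\textbf{au}^2}$ recalled above, $\mathcal{A}_{\textbf{au}^2}$ is built from the image $P'=[y_0u_0^{-1}:y_1u_1^{-1}:y_2u_2^{-1}:y_3]$ of $P$ under the isomorphism $Y_\textbf{a}\xrightarrow{\sim}Y_{\textbf{au}^2}$, with $a_n':=a_nu_n^2$ for $n\leq 2$ and $a_3':=-1$. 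Setting $\alpha_n:=a_ny_n$ for $n\leq 2$ and $\alpha_3:=-y_3$, I would first record that $\mathcal{A}_\textbf{a}$ is $p$-normalised exactly when all $\alpha_n\in\Zp$ and some $\alpha_m\in\Zp^{\times}$; and since $a_n'y_n'=u_n\alpha_n$ for $n\leq 2$ and $a_3'y_3'=\alpha_3$, the algebra $\mathcal{A}_{\textbf{au}^2}$ is $p$-normalised exactly when $u_i\alpha_i,\,u_j\alpha_j,\,u_k\alpha_k,\,\alpha_3$ all lie in $\Zp$ and one of them is a unit.

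Placing these two conditions side by side, they agree almost formally at the indices $i,j,3$, because $u_i,u_j\in\Zp^{\times}$ forces $u_i\alpha_i$, $u_j\alpha_j$ and $\alpha_3$ to have the same $p$-adic valuations as $\alpha_i,\alpha_j,\alpha_3$. All the content sits at the index $k$, where $u_k$ is only assumed to lie in $\Zp$: I need to rule out that $u_k\alpha_k\in\Zp$ while $\alpha_k\notin\Zp$, and that $\alpha_k$ is the unique $p$-adic unit among the $\alpha_n$ while $u_k\notin\Zp^{\times}$. Both are settled by the equation of $Y_\textbf{a}$ rewritten as $a_ky_k^2=y_3^2-a_iy_i^2-a_jy_j^2$. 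Indeed, if $\alpha_i,\alpha_j,\alpha_3\in\Zp$ then, using $a_i,a_j\in\Zp^{\times}$, the right-hand side lies in $\Zp$, so $a_ky_k^2\in\Zp$, and since $a_k\in\Zp^{\times}$ this gives $y_k\in\Zp$, i.e. $\alpha_k\in\Zp$; and if $\alpha_i,\alpha_j,\alpha_3\in p\Zp$ then the right-hand side lies in $p^2\Zp$, so $y_k\in p\Zp$ and $\alpha_k=a_ky_k$ is not a unit, whence if $\mathcal{A}_\textbf{a}$ is $p$-normalised then (as some $\alpha_m$ is a unit) at least one of $\alpha_i,\alpha_j,\alpha_3$ is already a unit.

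Granting those two observations, each implication becomes a single line. For the forward implication: all $\alpha_n\in\Zp$ gives all $u_n\alpha_n\in\Zp$, and a unit among $\alpha_i,\alpha_j,\alpha_3$ transfers to the corresponding $u_i\alpha_i$, $u_j\alpha_j$ or $\alpha_3$, so $\mathcal{A}_{\textbf{au}^2}$ is $p$-normalised. For the converse: $u_i\alpha_i,u_j\alpha_j,\alpha_3\in\Zp$ give $\alpha_i,\alpha_j,\alpha_3\in\Zp$, the rewritten quadric equation then yields $\alpha_k\in\Zp$, and a unit among $u_i\alpha_i,u_j\alpha_j,u_k\alpha_k,\alpha_3$ forces a unit among the $\alpha_n$ (at the index $k$ one also uses that $\alpha_k\in\Zp$, just established, so that $u_k\alpha_k$ being a unit makes both $u_k$ and $\alpha_k$ units), so $\mathcal{A}_\textbf{a}$ is $p$-normalised. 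The only thing requiring care throughout is precisely this bookkeeping at the index $k$---the one place where the hypothesis $u_k\in\Zp$ rather than $u_k\in\Zp^{\times}$ is relevant---and I do not expect a deeper obstacle, since the quadric relation together with $a_0,a_1,a_2\in\Zp^{\times}$ is exactly what makes that case harmless.
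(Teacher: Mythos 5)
Your proof is correct and follows essentially the same route as the paper's: both directions ultimately reduce to the quadric relation $a_k y_k^2 = y_3^2 - a_i y_i^2 - a_j y_j^2$ together with $a_0,a_1,a_2 \in \Z_p^\times$. If anything, your write-up is a bit more complete than the paper's, which in the converse direction asserts $\mathcal{A}_\textbf{a}$ is $p$-normalised once some $u_0y_0, y_1, y_2, y_3$ is a unit without explicitly recording that the quadric equation also forces $y_0\in\Z_p$; you make this step explicit.
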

\begin{proof}
Without loss of generality we may assume $u_0\in \Z_p$. If $\mathcal{A}_\textbf{a}$ is $p$-normalised and $\mathcal{A}_{\textbf{au}^2}$ is not $p$-normalised, then $p$ divides $y_1, y_2$ and $y_3$, but not $y_0$. However we have
\begin{equation*}
    0=\nu_p(a_0y_0^2)=\nu_p(-a_1y_1^2-a_2y_2^2+y_3^2)\geq 2,
\end{equation*}
a contradiction. If $\mathcal{A}_{\textbf{au}^2}$ is $p$-normalised, then $p$ does not divide either $u_0y_0, y_1, y_2$ or $y_3$. Therefore $\mathcal{A}_\textbf{a}$ must also be $p$-normalised. 
\end{proof}

\section{Counting}
\label{sec: counting}
\subsection{Set-up}
Throughout this section we will closely follow \S5 in \cite{TimSantens}. To make the similarities between the papers clearer we will use the same notation as used in \cite{TimSantens} when appropriate to do so. Furthermore, unless explicitly stated otherwise, we let $\{i,j,k\}=\{0,1,2\}$. We will write $\prod_i$ to mean the product over all $i\in\{0,1,2\}$. We will write $\prod_{i<j}$ to mean the product over all $i$ and $j$ in $\{0,1,2\}$ such that $i$ is less than $j$. Furthermore, we will write $\prod_{i\neq j}$ to mean the product over all $i$ in $\{0,1,2\}$ such that $i$ is not equal to $j$.\par
Let $S$ be the set of all primes less than $98$, and let
\begin{equation*}
    \textbf{$\Phi$}(T):= \left\{\textbf{A}=(A_0,A_1,A_2)\in (\Z\setminus\{0\})^3: \begin{array}{l}
       \abs{A_i}\leq T \text{ for all } i,\\
         p\mid A_1A_2A_3 \implies \left[ p \in S \text{ or } p^3\mid A_1A_2A_3 \right]  
    \end{array} \right\}.
\end{equation*}
For \textbf{A} $\in$ \textbf{$\Phi$}$(T)$ we define $m_\textbf{A}:= \operatorname{rad}(A_0A_1A_2\prod_{p\in S}p)$ and $\theta_\textbf{A}:=-A_0A_1A_2$. Let \textbf{$\Omega$} denote the set of all $3$-tuples of cosets of $\left(\Z/8m_\textbf{A}\Z\right)^{\times4}$ in $\left(\Z/8m_\textbf{A}\Z\right)^{\times}$, and let
\begin{equation*}
    \Psi_\textbf{A}:= \{\textbf{M}=(M_0,M_1,M_2)\in \textbf{$\Omega$}: M_0M_1M_2=\gamma^2\left(\Z/8m_\textbf{A}\Z\right)^{\times4}, \text{ for some } \gamma \in \left(\Z/8m_\textbf{A}\Z\right)^{\times}\}.
\end{equation*}
For any choice of $(\textbf{A}, \textbf{M})\in$  \textbf{$\Phi$}$(T)\times\Psi_\textbf{A}$ and $T\in \RR_{\geq0}$, we define
\begin{align*}
N_{\textbf{A},\textbf{M}}(T):= \left\{ \begin{array}{l} (\textbf{u},\textbf{v},\textbf{w})\\
\textbf{u}=(u_0,u_1,u_2)\in \NN^3\\
\textbf{v}=(v_{01},v_{02},v_{12})\in \NN^3\\
\textbf{w}=(w_{01},w_{02},w_{12})\in \NN^3
\end{array} : \begin{array}{l}
    \abs{A_iu_i^2\prod_{j\neq i}v_{ij}w_{ij}}\leq T, \text{ for all }i,     \\
    \mu\left(m_{\textbf{A}}\prod_iu_i\prod_{j\neq i}v_{ij}w_{ij}\right)^2=1,\\
    p\mid v_{ij} \implies \theta_\textbf{A} \in \Q_p^{\times2}, p\mid w_{ij} \implies \theta_\textbf{A} \notin \Q_p^{\times2},\\
     u_i^2\prod_{j\neq i}v_{ij}w_{ij} (\mod 8m_\textbf{A}) \in M_i \text{ for all } i,\\
     S_{\textbf{a}}(\Q_p)\neq \varnothing \text{ for } p\nmid m_\textbf{A}
  \end{array}\right\}.
\end{align*}
Now we define $a_i:=A_iu_i^2\prod_{j\neq i}v_{ij}w_{ij}$ and let $\textbf{a}:=(a_0,a_1,a_2)$; these $a_i$ will be the coefficients of the surfaces $S_{\textbf{a}}$ that we will be counting. We define $t_{ij}:=v_{ij}w_{ij}$. Lastly, let
   \begin{equation*}
       M_\textbf{A}:=\begin{cases}
           \frac{1}{2}, &\text{if } \theta_\textbf{A}\in-\Q^{\times 2},\\
           \frac{3}{8}, &\text{if } \theta_\textbf{A}\not\in \Q^{\times 2}.
       \end{cases}
   \end{equation*} 
This will show up in the power of the $\log T$ term of the asymptotic formulas given in Theorem~\ref{thm: main theorem split into squares and non squares}.
\begin{remark}
\label{rem: a completely determines A, M. u, v, w}
 If we are given a tuple $\textbf{a}$ such that $a_i:=A_iu_i^2\prod_{j\neq i}v_{ij}w_{ij}$ for all $i$, where $(\textbf{u},\textbf{v},\textbf{w})\in \NAM(T)$ for some $\textbf{A}, \textbf{M}$ and $T$, then the conditions on $\NAM(T)$ uniquely determine \textbf{A}, \textbf{M}, \textbf{u}, \textbf{v} and \textbf{w}. In particular for any surface $S_{\textbf{a}}$ there is at most one element in one set $N_{\textbf{A},\textbf{M}}(T)$ corresponding to this surface. \par
 Similarly, given $t_{ij}:=v_{ij}w_{ij}$ for some $(\textbf{u},\textbf{v},\textbf{w})\in \NAM(T)$, the conditions on $\NAM(T)$ uniquely determine $v_{ij}$ and $w_{ij}$. In particular if we write $\textbf{t}=(t_{01},t_{02},t_{12})$, then for each $(\textbf{u},\textbf{v},\textbf{w})\in \NAM(T)$ there exists a unique tuple $(\textbf{u},\textbf{t})$ associated to it.  
\end{remark}
\begin{remark}
If $\abs{A_i}>T$ for any $i$, then clearly $N_{\textbf{A},\textbf{M}}(T)=\varnothing$ (hence our choice of \textbf{$\Phi$}$(T)$).   
\end{remark}
\begin{remark}
For any $(\textbf{u},\textbf{v},\textbf{w})\in N_{\textbf{A},\textbf{M}}(T)$ we must have $\prod_iu_i^2\prod_{j\neq i}v_{ij}^2w_{ij}^2\in M_0M_1M_2$. Therefore if $M_0M_1M_2$ is not represented by a square element of $\left(\Z/8m_\textbf{A}\Z\right)^{\times}$, then $N_{\textbf{A},\textbf{M}}(T)=\varnothing$ (hence our choice of $\Psi_\textbf{A}$).     
\end{remark}

\subsection{Reductions}
\subsubsection{Reducing $N^{\Br}_{\neq\pm\Box}(T)$ to $\NAM^{\Br}(T)$}
In this section we will prove that to find asymptotic formulae for $\#N^{\Br}_{=-\Box}(T)$ and $\#N^{\Br}_{\neq\pm\Box}(T)$ it suffices to find an asymptotic formula for another set (namely $\#\NAM^{\Br}(T)$).\\
Let 
\begin{equation*}
    N^{\Br}_{\textbf{A}, \textbf{M}}(T):=\{(\textbf{u},\textbf{v},\textbf{w})\in N_{\textbf{A},\textbf{M}}(T): S_{\textbf{a}} \text{ has a Brauer-Manin obstruction}\},
\end{equation*}
then we have the following reduction:
\begin{lemma}
\label{lem: reducing N^Br to sum of N^Br_A,M}
We have
\begin{equation*}
    \#N^{\Br}_{=\Box}(T)=\mathop{\sum}_{\substack{(\textbf{A},\textbf{M})\in \textbf{$\Phi(T)$}\times\Psi_\textbf{A}\\ \theta_\textbf{A} \in (\Q^{\times})^2}}  \#\NAM^{\Br}(T),
\end{equation*}
\begin{equation*}
    \#N^{\Br}_{=-\Box}(T)=\mathop{\sum}_{\substack{(\textbf{A},\textbf{M})\in \textbf{$\Phi(T)$}\times\Psi_\textbf{A}\\ \theta_\textbf{A} \in -(\Q^{\times})^2}}  \#\NAM^{\Br}(T),
\end{equation*}
\begin{equation*}
    \#N^{\Br}_{\neq\pm\Box}(T)=\mathop{\sum}_{\substack{(\textbf{A},\textbf{M})\in \textbf{$\Phi(T)$}\times\Psi_\textbf{A}\\ \theta_\textbf{A} \notin\pm (\Q^{\times})^2}}  \#\NAM^{\Br}(T).
\end{equation*}
\end{lemma}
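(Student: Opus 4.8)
The plan is to derive all three identities from one bijection that is compatible with the square-class of $\theta$. Write $N^{\Br}(T)$ for the set of $\textbf{a}\in(\Z\setminus\{0\})^3$ with $\abs{a_i}\le T$ for every $i$ and such that $S_{\textbf{a}}$ has a Brauer--Manin obstruction; it is partitioned into $N^{\Br}_{=\Box}(T)$, $N^{\Br}_{=-\Box}(T)$ and $N^{\Br}_{\neq\pm\Box}(T)$ according to the square-class of $\theta_{\textbf{a}}=-a_0a_1a_2$. First I would introduce
\[
\Xi\colon\ \bigsqcup_{(\textbf{A},\textbf{M})\in\Phi(T)\times\Psi_{\textbf{A}}}N^{\Br}_{\textbf{A},\textbf{M}}(T)\ \longrightarrow\ N^{\Br}(T),\qquad (\textbf{A},\textbf{M},\textbf{u},\textbf{v},\textbf{w})\longmapsto\textbf{a},
\]
where $a_i:=A_iu_i^2\prod_{j\neq i}v_{ij}w_{ij}$. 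That $\Xi$ is well defined and lands in $N^{\Br}(T)$ is immediate: $a_i\in\Z\setminus\{0\}$, the size constraint in the definition of $N_{\textbf{A},\textbf{M}}(T)$ gives $\abs{a_i}\le T$, and membership in $N^{\Br}_{\textbf{A},\textbf{M}}(T)$ is precisely the requirement that $S_{\textbf{a}}$ have a Brauer--Manin obstruction. Injectivity of $\Xi$ is exactly Remark~\ref{rem: a completely determines A, M. u, v, w}. So the substance of the proof is surjectivity of $\Xi$.

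For surjectivity, fix $\textbf{a}\in N^{\Br}(T)$, so $S_{\textbf{a}}$ is everywhere locally soluble and has a Brauer--Manin obstruction. The crucial point is that, by Lemma~\ref{thm: p odd unqie implies no BM}, there is no prime $p\notin S$ (in particular $p\neq 2$) with $\nu_p(a_0a_1a_2)=1$, because such a prime would divide exactly one of the $a_i$, once, and neither of the other two. I would then define the core by $\varepsilon_i:=\operatorname{sign}(a_i)$ and
\[
A_i:=\varepsilon_i\prod_{p\in S}p^{\nu_p(a_i)}\prod_{\substack{p\notin S\\ p^3\mid a_0a_1a_2}}p^{\nu_p(a_i)},
\]
so that $a_i/A_i$ is a positive integer supported on primes $p\notin S$ with $\nu_p(a_0a_1a_2)=2$; at each such $p$ the triple $(\nu_p(a_0),\nu_p(a_1),\nu_p(a_2))$ is a permutation of $(2,0,0)$ or of $(1,1,0)$. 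In the first case I put $p$ into $u_i$; in the second, with the two positive valuations at indices $i$ and $j$, I put $p$ into $v_{ij}$ if $\theta_{\textbf{a}}\in\Q_p^{\times2}$ and into $w_{ij}$ otherwise. This produces $\textbf{u},\textbf{v},\textbf{w}\in\NN^3$ with $a_i=A_iu_i^2\prod_{j\neq i}v_{ij}w_{ij}$ and with $m_{\textbf{A}},u_0,u_1,u_2,v_{01},w_{01},v_{02},w_{02},v_{12},w_{12}$ pairwise coprime and squarefree.

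The remaining step is the routine verification that this data is admissible and is sent to $\textbf{a}$ by $\Xi$. One checks that $\textbf{A}\in\Phi(T)$ (here $\abs{A_i}\le\abs{a_i}\le T$, and any $p\notin S$ dividing $A_0A_1A_2$ divides $a_0a_1a_2$, hence $A_0A_1A_2$, to order at least $3$); that $\textbf{M}:=\big(u_i^2\prod_{j\neq i}v_{ij}w_{ij}\bmod 8m_{\textbf{A}}\big)_{i}$ lies in $\Psi_{\textbf{A}}$ (its entries are units modulo $8m_{\textbf{A}}$ because $2\in S$ divides $m_{\textbf{A}}$, and their product is the square-class of $u_0u_1u_2\prod_{i<j}v_{ij}w_{ij}$); and that $(\textbf{u},\textbf{v},\textbf{w})\in N_{\textbf{A},\textbf{M}}(T)$, where the size, squarefreeness and congruence conditions hold by construction, the condition $S_{\textbf{a}}(\Q_p)\neq\varnothing$ for $p\nmid m_{\textbf{A}}$ holds because $S_{\textbf{a}}$ is everywhere locally soluble, and the conditions relating the primes dividing $v_{ij}$, resp.\ $w_{ij}$, to $\theta_{\textbf{A}}$ follow since $\theta_{\textbf{a}}/\theta_{\textbf{A}}=\big(u_0u_1u_2\prod_{i<j}v_{ij}w_{ij}\big)^2$ is a square, so $\theta_{\textbf{a}}\in\Q_p^{\times2}\iff\theta_{\textbf{A}}\in\Q_p^{\times2}$ for all $p$. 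As $S_{\textbf{a}}$ has a Brauer--Manin obstruction, in fact $(\textbf{u},\textbf{v},\textbf{w})\in N^{\Br}_{\textbf{A},\textbf{M}}(T)$, and $\Xi$ maps it to $\textbf{a}$.

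Finally, since $\theta_{\textbf{a}}$ and $\theta_{\textbf{A}}$ differ by a square, $\Xi$ restricts to a bijection between $\bigsqcup_{\theta_{\textbf{A}}\in(\Q^{\times})^2}N^{\Br}_{\textbf{A},\textbf{M}}(T)$ and $N^{\Br}_{=\Box}(T)$, between $\bigsqcup_{\theta_{\textbf{A}}\in-(\Q^{\times})^2}N^{\Br}_{\textbf{A},\textbf{M}}(T)$ and $N^{\Br}_{=-\Box}(T)$, and between $\bigsqcup_{\theta_{\textbf{A}}\notin\pm(\Q^{\times})^2}N^{\Br}_{\textbf{A},\textbf{M}}(T)$ and $N^{\Br}_{\neq\pm\Box}(T)$; taking cardinalities yields the three formulas. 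I expect the main obstacle to be the surjectivity step, and within it the verification that $\textbf{A}\in\Phi(T)$: a prime $p\notin S$ with $\nu_p(a_0a_1a_2)=1$ cannot be absorbed into any $u_i^2$ (wrong parity of valuation), nor into a $v_{ij}w_{ij}$ (that would force $p$ to divide a second coordinate), nor into $A_i$ (that would make $p$ divide $A_0A_1A_2$ exactly once, violating the cube condition in $\Phi(T)$) --- and it is precisely to exclude such primes that one restricts to surfaces with a Brauer--Manin obstruction and invokes Lemma~\ref{thm: p odd unqie implies no BM}.
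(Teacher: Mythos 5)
Your proof is correct and takes essentially the same approach as the paper: map each $(\textbf{A},\textbf{M},\textbf{u},\textbf{v},\textbf{w})$ to $\textbf{a}$, invoke Remark~\ref{rem: a completely determines A, M. u, v, w} for injectivity of this assignment, and use Lemma~\ref{thm: p odd unqie implies no BM} to show that the only $\textbf{a}$ with $\abs{a_i}\le T$ that fail to be in the image are those with a prime $p\notin S$ such that $\nu_p(a_0a_1a_2)=1$, which are absent from the left-hand side. You are more explicit about the surjectivity step --- writing down $A_i$, sorting each prime into $u_i$, $v_{ij}$ or $w_{ij}$, and checking each condition in the definition of $N_{\textbf{A},\textbf{M}}(T)$ --- whereas the paper states the bijection in one sentence, but the underlying argument is identical.
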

\begin{proof}
 As in the above exposition, given $(\textbf{u},\textbf{v},\textbf{w})\in N_{\textbf{A},\textbf{M}}^{\Br}(T)$, for some $(\textbf{A},\textbf{M})\in$ \textbf{$\Phi(T)$}$\times\Psi_\textbf{A}$, we obtain a tuple $\textbf{a}=(a_0,a_1,a_2)$ associated to it. In particular we can indeed view the right-hand side as a subset of the left-hand side. Furthermore as in Remark~\ref{rem: a completely determines A, M. u, v, w}, any such tuple $\textbf{a}=(a_0,a_1,a_2)$ uniquely determines \textbf{A}, \textbf{M}, \textbf{u}, \textbf{v} and \textbf{w}, and hence each set on the right-hand side is disjoint from each other. Lastly, the only tuples \textbf{a} that may be counted in the left-hand side but are not counted in the right-hand side are those tuples such that $S_\textbf{a}$ is everywhere locally soluble and there exists a prime $p\not\in S$ such that $\nu_p(a_0a_1a_2)=1$. However by Lemma~\ref{thm: p odd unqie implies no BM} such surfaces have no Brauer-Manin obstruction. In particular the left-hand side does not count these tuples either.
\end{proof}

\begin{lemma}
\label{lem: theta_a a square bounded}
If $\theta_\textbf{A}\in \Q^{\times 2}$, then for $T>2$ we have
\begin{equation*}
    \#\NAM^{\Br}(T)\ll \frac{T^\frac{3}{2}(\log T)}{\abs{\theta_\textbf{A}}^{\frac{1}{2}}}.
\end{equation*}
\end{lemma}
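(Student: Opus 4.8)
The plan is to bound $\#\NAM^{\Br}(T)$ by an elementary divisor sum. First I would exploit that $\theta_{\textbf{A}}\in\Q^{\times2}$ is a square in every completion $\Q_p$, so no prime can divide any $w_{ij}$; hence $w_{ij}=1$ for all $i<j$, $t_{ij}=v_{ij}$, and the coefficients of the surfaces counted are $a_i=A_iu_i^2v_{ij}v_{ik}$ with $\{i,j,k\}=\{0,1,2\}$. Moreover $\theta_{\textbf{a}}=-a_0a_1a_2=\theta_{\textbf{A}}\,(u_0u_1u_2v_{01}v_{02}v_{12})^2$ is again a square, so the quaternion algebra $\mathcal{A}_{\textbf{a}}=(\theta_{\textbf{a}},f_{\textbf{a}}/x_0^2)$ is split and cannot obstruct anything. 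Combining this with the determination of $\Br S_{\textbf{a}}/\Br\Q$ in \cite{On_the_arithmetic_of_del_Pezzo_surfaces_of_degree_2} and the constructions of Proposition~\ref{prop: quaternion algebra definition from BrightM.J.2011TBoo}, any Brauer--Manin obstruction on such an $S_{\textbf{a}}$ is produced by one of the algebras $\mathcal{B}_{ijk,\sqrt{\theta_{\textbf{a}}}}$; in particular, for every tuple counted by $\NAM^{\Br}(T)$ the surface $S_{\textbf{a}}$ is everywhere locally soluble and $\inv_p(\mathcal{B}_{ijk,\sqrt{\theta_{\textbf{a}}}}(-))$ is constant at every prime $p$.

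Next set $N:=u_0u_1u_2v_{01}v_{02}v_{12}$. The condition $\mu\big(m_{\textbf{A}}\prod_i u_i\prod_{j\neq i}v_{ij}w_{ij}\big)^2=1$ forces $N$ to be squarefree, coprime to $m_{\textbf{A}}$, with its six factors pairwise coprime; and $\abs{a_i}\leq T$ for all $i$ gives $\abs{\theta_{\textbf{A}}}\,N^2=\abs{a_0a_1a_2}\leq T^3$, hence $N\leq T^{3/2}\abs{\theta_{\textbf{A}}}^{-1/2}$ (if this is $<1$ the set is empty and there is nothing to prove). For a fixed squarefree value of $N$ I would count the admissible $(\textbf{u},\textbf{v})$ with $u_0u_1u_2v_{01}v_{02}v_{12}=N$ producing a surface with a Brauer--Manin obstruction: each prime $p\mid N$ divides exactly one of the six variables, so a priori there are $6^{\omega(N)}$ possibilities, and the work is to cut this down. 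At a prime $p\mid v_{ij}$ (note $p\notin S$, so $p>33$) Lemma~\ref{lem: local solubility of S} already forces $a_k$ to be a square mod $p$ or $-a_i/a_j$ a fourth power mod $p$, and the constancy of $\inv_p(\mathcal{B}_{ijk,\sqrt{\theta_{\textbf{a}}}}(-))$ pins down the quartic residue behaviour of $-a_i/a_j$ at $p$ further still; at a prime $p\mid u_i$ one checks that $\mathcal{B}_{ijk,\sqrt{\theta_{\textbf{a}}}}$ imposes no new condition, so such primes only contribute a bounded multiplicative constant. The output I would extract is a bound of the form $C\cdot 2^{\omega(N)}=C\cdot d(N)$, with $C$ absolute, for the number of such $(\textbf{u},\textbf{v})$.

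Granting this, the lemma follows: summing over $N$ gives
\[
\#\NAM^{\Br}(T)\;\leq\;\sum_{N\leq T^{3/2}\abs{\theta_{\textbf{A}}}^{-1/2}}C\,d(N)\;\ll\;T^{3/2}\abs{\theta_{\textbf{A}}}^{-1/2}\log T \qquad (T>2),
\]
using $\sum_{N\leq X}d(N)=X\log X+O(X)$ together with $\log X\leq\tfrac{3}{2}\log T$ (the cases $T^{3/2}\abs{\theta_{\textbf{A}}}^{-1/2}<2$ being immediate for a large enough implied constant). The step I expect to be the main obstacle is the per-prime bound of the second paragraph: one must verify that requiring both local solubility at $p$ and constancy of the relevant $\mathcal{B}$-invariant leaves only $O(1)$ admissible residue patterns at each prime dividing a $v_{ij}$ (so the branching there effectively drops from $6$ to $2$) and that nothing is lost at the primes dividing the $u_i$. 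Since §\ref{subsec: Computing the local invariant maps} only computes the local invariants of $\mathcal{A}_{\textbf{a}}$, the cleanest route is to transfer the needed statements for $\mathcal{B}_{ijk,\sqrt{\theta_{\textbf{a}}}}$ from the diagonal quartic $X_{\textbf{a}}$ via the pullback of Remark~\ref{rem: pullback of algbra is algebra from X} and the corresponding results of \cite{TimSantens}, being careful that a Brauer--Manin obstruction on $S_{\textbf{a}}$ genuinely forces these $\mathcal{B}$-invariants to be locally constant; alternatively, the same conclusion can be reached by running the count as a direct Selberg--Delange estimate for the associated sum rather than through the crude divisor bound.
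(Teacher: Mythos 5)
Your overall strategy (reduce to a divisor sum over $N=u_0u_1u_2v_{01}v_{02}v_{12}$) is genuinely different from the paper's, which instead covers $\NAM^{\Br}(T)$ by three explicit subsets and bounds each by a volume integral. Unfortunately there is a real gap in your key claim, precisely at the step you flag as the ``main obstacle.''

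You assert that ``at a prime $p\mid u_i$ one checks that $\mathcal{B}_{ijk,\sqrt{\theta_{\textbf{a}}}}$ imposes no new condition, so such primes only contribute a bounded multiplicative constant.'' This is backwards: these primes impose the \emph{strongest} possible condition. Indeed, the paper's proof hinges on \cite[Lem.~3.5]{TimSantens} (pulled back via Remark~\ref{rem: pullback of algbra is algebra from X}): if $p>97$, $\nu_p(a_k)\equiv 2\bmod 4$, $p\nmid a_ia_j$, and none of $a_n,a_na_m$ equals $\pm1,\pm2$ in $\Q^\times/\Q^{\times 2}$, then $\inv_p(\mathcal{B}_{ij3,\sqrt{\theta_{\textbf{A}}}}(-))$ is \emph{surjective}. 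Since under these generic hypotheses $\Br S_\textbf{a}/\Br\Q\cong\Z/2\Z$ is generated by $\mathcal{B}$, surjectivity at a single prime kills the Brauer--Manin obstruction outright. So the primes $p\nmid m_\textbf{A}$ dividing some $u_i$ are not ``free''; generically they must not exist at all. This is exactly what produces the paper's case~(3), ``$u_i=1$ for all $i$.'' With your reading, the branching at each prime retains at least the three choices of which $u_i$ it divides, which alone gives $\sum_{N\leq X}\tau_3(N)\asymp X(\log X)^2$ and hence a bound $\asymp T^{3/2}(\log T)^2/\abs{\theta_\textbf{A}}^{1/2}$ --- worse than the target by a factor of $\log T$. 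Getting the per-$N$ count down to $\ll 2^{\omega(N)}$ requires the surjectivity input, not its negation.

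Two smaller points. First, you implicitly restrict to the generic case; the paper separately bounds the degenerate subsets where $a_n$ or $a_na_m=\pm 1,\pm 2$ in $\Q^\times/\Q^{\times 2}$ (needed because the magma computation of $\Br S_\textbf{a}/\Br\Q$ only gives $\Z/2\Z$ under that genericity), using a $t_{k\ell}=1$ argument. Second, even after fixing $u_i=1$, the paper does \emph{not} pass to a one-variable divisor sum in $N$: it keeps the three constraints $\abs{A_i}\prod_{j\neq i}t_{ij}\leq T$ separately and uses the volume bound $\int\prod dt_{ij}\ll T(\log T)^2/\abs{\theta_\textbf{A}}$, which it then relaxes to $T^{3/2}\log T/\abs{\theta_\textbf{A}}^{1/2}$. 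If you collapse to $N\leq T^{3/2}\abs{\theta_\textbf{A}}^{-1/2}$ and count decompositions $N=v_{01}v_{02}v_{12}$ with $\tau_3(N)$, you again lose a power of $\log T$; so even granting the (unsubstantiated) cut of the $v$-branching to $2$, you would need to justify it rather carefully, and the cleanest route really does seem to be the paper's split into the three cases with the surjectivity fact doing the heavy lifting.
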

\begin{proof}
Let $\mathcal{B}_{ij3,\sqrt{\theta_\textbf{A}}}$ be as in Proposition~\ref{prop: quaternion algebra definition from BrightM.J.2011TBoo}, then from Remark~\ref{rem: pullback of algbra is algebra from X} we have $\pi_\textbf{a}^*\mathcal{B}_{ij3,\sqrt{\theta_\textbf{A}}}= \mathcal{B}_{ij3,\sqrt{\theta_\textbf{A}}}'$, where $\mathcal{B}_{ij3,\sqrt{\theta_\textbf{A}}}'$ is the algebra defined in \cite[Prop.~3.1.(ii)]{TimSantens}. In particular if we let $p>97$ be prime, $a_n,a_na_m\neq \pm1,\pm2 \in \Q^{\times}/\Q^{\times 2}$ for all $n\neq m\in\{0,1,2\}$, $\nu_p(a_k)\equiv 2\mod 4$ and $p\nmid a_ia_j$, then by \cite[Lem.~3.5]{TimSantens} we have that $\inv_p(\mathcal{B}_{ij3,\sqrt{\theta_\textbf{A}}}'(-))$ is surjective. Thus $\inv_p(\mathcal{B}_{ij3,\sqrt{\theta_\textbf{A}}}(-))$ is surjective, and hence induces no Brauer-Manin obstruction. Furthermore from the magma code used in the proof of \cite[Thm.~2]{On_the_arithmetic_of_del_Pezzo_surfaces_of_degree_2} (which can be found in \cite{magmacode}) we know that under these hypothesis $\Br S_\textbf{a}/\Br \Q \cong \Z/2\Z$, and hence it is generated by $\mathcal{B}_{ij3,\sqrt{\theta_\textbf{A}}}$. Therefore there is no Brauer-Manin obstruction in this case. Thus we can bound $\NAM^{\Br}(T)$ by the three subsets of $\NAM(T)$ defined by
\begin{enumerate}
    \item $a_ia_j= \pm1,\pm2 \in \Q^{\times}/\Q^{\times 2}$, for some $i\neq j$;
    \item $a_k= \pm1,\pm2 \in \Q^{\times}/\Q^{\times 2}$, for some $k$;
    \item $u_i=1$ for all $i$.
\end{enumerate}
Consider the subset of $\NAM(T)$ such that $a_ia_j=\pm1,\pm2 \in \Q^{\times}/\Q^{\times 2}$ for some $i\neq j$. We may assume $i=0$ and $j=1$. Suppose $t_{k\ell}\neq 1$ for some $(k,\ell)\neq (0,1)$, then there exists $p \not\in S$ prime such that $p\mid t_{k\ell}$. However $\nu_p(a_0a_1)=1$, which contradicts our assumption. Thus $t_{k\ell}=1$ for all $(k,\ell)\neq (0,1)$, and hence we can bound this set by
\begin{equation*}
    \ll \mathop{\sum\sum}_{\substack{t_{01},u_k\\ \abs{A_k}u_k^2\prod_{i\neq k}t_{ik}\leq T}}1\ll \frac{T^\frac{3}{2}}{\abs{\theta_\textbf{A}}^\frac{1}{2}}\mathop{\sum}_{\substack{t_{01}\leq T}} \frac{1}{t_{01}}\ll \frac{T^\frac{3}{2}\log T}{\abs{\theta_\textbf{A}}^\frac{1}{2}}.
\end{equation*}
Now consider the subset of $\NAM^{\Br}(T)$ such that $a_i=\pm1,\pm2 \in \Q^{\times}/\Q^{\times 2}$ for some $i$. We may assume $i=0$. Suppose $t_{k\ell}\neq 1$ for some $(k,\ell)\neq (1,2)$, then there exists $p \not\in S$ prime such that $p\mid t_{k\ell}$. However $\nu_p(a_0)=1$, which contradicts our assumption. Thus $t_{k\ell}\neq 1$ for all $(k,\ell)\neq (1,2)$, and hence we can bound this set by
\begin{equation*}
    \ll \mathop{\sum\sum}_{\substack{t_{12},u_k\\ \abs{A_k}u_k^2\prod_{i\neq k}t_{ik}\leq T}}1\ll \frac{T^\frac{3}{2}}{\abs{\theta_\textbf{A}}^\frac{1}{2}}\mathop{\sum}_{\substack{t_{12}\leq T}} \frac{1}{t_{12}}\ll \frac{T^\frac{3}{2}\log T}{\abs{\theta_\textbf{A}}^\frac{1}{2}}.
\end{equation*}
For the third set, by \cite[Lem.~4.17]{TimSantens} we can bound this set by
\begin{align*}
    \ll\mathop{\sum}_{\substack{t_{ij}\\\abs{A_i}\prod_{i\neq j}t_{ij}\leq T}}1&\ll\int^{\abs{A_i}\prod_{i\neq j}t_{ij}\leq T}_{t_{ij}\geq \frac{1}{2}}\prod_{i<j}dt_{ij}.
\end{align*}
Thus we have
\begin{align*}
    \int^{\abs{A_i}\prod_{i\neq j}t_{ij}\leq T}_{t_{ij}\geq \frac{1}{2}}\prod_{i<j}dt_{ij}&\ll \frac{T}{\abs{\theta_\textbf{A}}}\int^{t_{02}t_{12}\leq T}_{t_{02},t_{12}\geq \frac{1}{2}}\frac{1}{t_{02}t_{12}}dt_{02}dt_{12}\\
    &\ll \frac{T (\log T)^2}{\abs{\theta_\textbf{A}}}\\
    &\ll\frac{T^\frac{3}{2}\log T}{\abs{\theta_\textbf{A}}^\frac{1}{2}}.\qedhere
\end{align*}
\end{proof}
Since we have now dealt with the case $\theta_\textbf{A}\in \Q^{\times 2}$, \textbf{we will assume $\theta_\textbf{A}\not\in \Q^{\times 2}$ for the rest of the paper, except in the proof of Theorem~\ref{thm: main theorem split into squares and non squares}}.
\subsubsection{Reducing $\NAM^{\Br}(T)$ to $\NAM^{\mathcal{A}}(T)$}
In this subsection we will prove that $\mathcal{A}_\textbf{a}$ induces the Brauer-Manin obstruction on the majority of surfaces in $\NAM^{\Br}(T)$. To do this we need the following result, which we will prove in \S\ref{subsec: The error terms}:
\begin{definition}
   \begin{equation*}
       \NAM^{loc}(T):=\left\{ (\textbf{u},\textbf{v},\textbf{w})\in \NAM(T) :  p \mid w_{ij} \implies \left[ p\equiv 3\mod 4 \text{ and } -\frac{a_i}{a_{j}}\in\Z_p^{\times 2}\right]\right\}.
   \end{equation*}
      \begin{equation*}
       \NAM^{\mathcal{A}}(T):=\left\{ (\textbf{u},\textbf{v},\textbf{w})\in \NAM(T) :  S_{\textbf{a}} \text{ has a Brauer-Manin obstruction induced by } \mathcal{A}_\textbf{a}\right\}.
   \end{equation*}
\end{definition}
\begin{lemma}
\label{lem: bound for downward open sets for reduction}
For all $\epsilon,C>0$, $T>2$ and $k\in\{0,1,2\}$ we have
\begin{equation}
\label{eq: first downward open set}
  \#\left\{(\textbf{u},\textbf{v},\textbf{w})\in \NAM^{loc}(T) : w_{ij}\leq (\log T^\frac{3}{2})^C \text{ for all } \{i,j\}\right\} \ll_{\epsilon,C} \frac{T^\frac{3}{2}(\log T)^{\frac{5}{2}M_\textbf{A}}}{\abs{\theta_\textbf{A}}^{\frac{1}{2}-\epsilon}},  
\end{equation}
\begin{equation}
\label{eq: second downward open set}
    \#\left\{(\textbf{u},\textbf{v},\textbf{w})\in \NAM^{loc}(T) : u_i\leq (\log T^\frac{3}{2})^C \text{ for all } i\neq k \right\} \ll_{\epsilon,C} \frac{T^\frac{3}{2}(\log T)^{\frac{5}{2}M_\textbf{A}}}{\abs{\theta_\textbf{A}}^{\frac{1}{2}-\epsilon}}.
\end{equation}
\end{lemma}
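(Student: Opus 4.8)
The two bounds concern subsets of $\NAM^{loc}(T)$ cut out by forcing a prescribed family of the counting variables to be ``small'', that is, at most $(\log T^{3/2})^{C}$; such constraints are downward closed, and the plan is to adapt the treatment of the analogous downward open sets in \cite[\S5]{TimSantens}. The mechanism is the same in both cases: freeze the small variables, bound the number of ways to complete them to an element of $\NAM^{loc}(T)$, and observe that each frozen variable contributes only a factor $O(\log\log T)$ where when free it would contribute a genuine positive power of $\log T$; summing over the (polylogarithmically many) choices of frozen variables then yields a power of $\log T$ strictly smaller than the one governing $\#\NAM^{loc}(T)$.

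The structural input used to bound the completions is the identity $|a_0a_1a_2| = |\theta_{\textbf{A}}|\,(u_0u_1u_2\prod_{i<j}v_{ij}w_{ij})^{2}$, which turns the box constraints $|a_i|\le T$ $(i=0,1,2)$ into product bounds on the variables: multiplying the three constraints and taking a square root gives $u_0u_1u_2\prod_{i<j}v_{ij}w_{ij}\le T^{3/2}/|\theta_{\textbf{A}}|^{1/2}$, so in particular $\prod_{i<j}v_{ij}\le T^{3/2}/(|\theta_{\textbf{A}}|^{1/2}u_0u_1u_2\prod_{i<j}w_{ij})$. To count the $v_{ij}$ and $w_{ij}$ subject to their prime-factorization restrictions---$p\mid v_{ij}\Rightarrow\theta_{\textbf{A}}\in\Q_p^{\times 2}$, and, for tuples in $\NAM^{loc}(T)$, the stronger requirement that every $p\mid w_{ij}$ be $\equiv3\bmod4$, inert in $\Q(\sqrt{\theta_{\textbf{A}}})$, and satisfy $-a_i/a_j\in\Z_p^{\times 2}$---one uses the estimates for integers with all prime factors in a fixed Chebotarev set, as in \cite[\S4]{TimSantens}; in particular $\sum_{v\le X}1/v\ll(\log X)^{1/2}$ over such $v$, and the corresponding sum over the $w_{ij}$ is smaller still. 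The remaining conditions---squarefreeness, coprimality, the congruences $u_i^{2}\prod_{j\ne i}v_{ij}w_{ij}\in M_i\pmod{8m_{\textbf{A}}}$, and the $\NAM^{loc}$ condition on $-a_i/a_j$---only reduce the counts, by factors whose size is controlled by $m_{\textbf{A}}$; since $m_{\textbf{A}}=\operatorname{rad}(A_0A_1A_2\prod_{p\in S}p)\ll_{\epsilon}|\theta_{\textbf{A}}|^{1+\epsilon}$, this loss is absorbed into the factor $|\theta_{\textbf{A}}|^{\epsilon}$ allowed by the statement.

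For \eqref{eq: first downward open set} one freezes $\textbf{w}=(w_{01},w_{02},w_{12})$ with each $w_{ij}\le(\log T^{3/2})^{C}$; for each such $\textbf{w}$ one bounds the number of completions $(\textbf{u},\textbf{v})$ by summing first over the $v_{ij}$ (using the product bound above and the density-$\tfrac12$ estimate), then over $u_i\le(T/|A_i|)^{1/2}$ via $\sum 1/u_i\ll\log T$, and finally over the frozen $w_{ij}$ using $\sum_{w_{ij}\le(\log T^{3/2})^{C}}1/w_{ij}\ll_{C}\log\log T$. Each of the three frozen $w_{ij}$, which contributes a positive power of $\log T$ to $\#\NAM^{loc}(T)$ when free, thus contributes only $O(\log\log T)$; tracking the exponents exactly as in the computation of $\#\NAM^{loc}(T)$ (cf.\ \cite[\S5]{TimSantens}) shows that the resulting power of $\log T$ is at most $\tfrac52M_{\textbf{A}}$ (in fact somewhat less), and $(\log\log T)^{O(1)}\ll_{\epsilon}(\log T)^{\epsilon}$ is absorbed as before. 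The argument for \eqref{eq: second downward open set} is the same but easier: the $u_i$ carry no prime-factorization restriction, so a free $u_i$ contributes a full power $\log T$ while a frozen one contributes only $\sum_{u_i\le(\log T^{3/2})^{C}}1/u_i\ll_{C}\log\log T$; freezing two of the three $u_i$ therefore produces a power of $\log T$ well below $\tfrac52M_{\textbf{A}}$, and the claimed bound follows at once.

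The main obstacle is the bookkeeping of the powers of $\log T$ through the coupled constraints $|a_i|\le T$, which do not separate the three variables $v_{ij}$, together with the two ``twisted'' local conditions---the congruences modulo $8m_{\textbf{A}}$ and the $\NAM^{loc}$ condition $-a_i/a_j\in\Z_p^{\times 2}$ at primes dividing $w_{ij}$---which must be fed into the Chebotarev counting estimates so as to extract the correct densities while keeping the dependence on $m_{\textbf{A}}$ polynomial. Since the target exponent $\tfrac52M_{\textbf{A}}$ is deliberately not sharp, no optimal constants are required; the real content is organizational, namely arranging the nested summation so that the frozen variables are summed outermost and are seen to cost only $\log\log T$ apiece.
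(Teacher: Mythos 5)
Your proposal has a genuine gap, and it occurs precisely at the point you wave away as ``only reducing the counts''. The conditions in $\NAM^{loc}(T)$ at primes $p\mid v_{ij}$ and $p\mid w_{ij}$ are not merely the fixed Chebotarev-style condition on $\theta_{\textbf A}$; the local-solubility requirement built into $\NAM(T)$ (via Lemma~\ref{lem: conditions for local solublity in counting}) also forces $a_k\in\Z_p^{\times 2}$ when $p\mid v_{ij}$, and the $\NAM^{loc}$ condition forces $-a_i/a_j\in\Z_p^{\times 2}$ when $p\mid w_{ij}$. These conditions depend on the \emph{other} variables in the tuple, and dropping them roughly doubles the density of admissible primes: for $\theta_{\textbf A}\in-\Q^{\times 2}$, the true density for the $v_{ij}$ (resp.\ $w_{ij}$) primes is $1/4$, but after dropping the cross-dependent condition you only have the condition $\theta_{\textbf A}\in\Q_p^{\times 2}$ (resp.\ $p\equiv3\bmod4$), of density $1/2$. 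The resulting $\sum_{v\le X}1/v\ll(\log X)^{1/2}$ over three coupled variables produces a factor $(\log T)^{3/2}$, and your bound for \eqref{eq: first downward open set} becomes $\ll T^{3/2}(\log T)^{3/2}(\log\log T)^{O(1)}/|\theta_{\textbf A}|^{1/2}$. But $\tfrac{3}{2}=3M_{\textbf A}>\tfrac{5}{2}M_{\textbf A}=\tfrac{5}{4}$, so this is too large; indeed it is the same order as $\#\NAM^{loc}(T)$ itself, so freezing $\textbf w$ small has bought you nothing. The situation is worse still when $\theta_{\textbf A}\notin\pm\Q^{\times 2}$: there $3M_{\textbf A}=\tfrac{9}{8}$, and your bound of $(\log T)^{3/2}$ exceeds even the size of the full set. (Keeping the conditions doesn't help either, because they are not conditions on a fixed Chebotarev class: the splitting condition on $a_k$ or $-a_i/a_j$ changes as $\textbf u,\textbf v,\textbf w$ vary, so the direct Chebotarev estimate you invoke does not apply.)

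The paper resolves exactly this difficulty \emph{before} summing: it converts the joint indicator $\omega(v_{ij};\theta_{\textbf A},a_k)$ (resp.\ $\gamma(w_{ij};\theta_{\textbf A},a_k)$) into a Dirichlet-convolution form via Lemmas~\ref{lem: lambda formula} and \ref{lem: omega formula}, which separates the variable-dependent part into an oscillating Jacobi symbol, and then applies the bilinear character-sum estimate \cite[Thm.~4.22]{TimSantens} inside Lemma~\ref{lem: formula for downward-closed subset N^(loc,U)(T)} to kill the oscillation, leaving the single-argument multiplicative weights $\alpha(-;\theta_{\textbf A})$, $\beta(-;\theta_{\textbf A})$ whose means are $\tfrac14$ (or $\tfrac18$), not $\tfrac12$. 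Only after this reduction does one sum over the surviving variables; then the $d_{ij}$ (i.e.\ the remnant of $v_{ij}$) contribute $(\log T)^{3/4}$, the frozen $h_{ij}$ contribute $(\log\log T)^{O(1)}$, and the bound $\tfrac{5}{2}M_{\textbf A}$ comes out. The character-sum step is indispensable: without it there is no way to account for the cross-dependent condition while keeping the right power of $\log T$, and ``drop it for an upper bound'' is not good enough. A minor additional slip: your intermediate line ``$\sum 1/u_i\ll\log T$'' is not the quantity you want when summing an indicator over $u_i$; the sum over each free $u_i$ should produce a factor $\asymp T^{1/2}/(|A_i|\prod_{j\ne i}v_{ij}w_{ij})^{1/2}$, not a logarithm.
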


\begin{lemma}
\label{lem: reducing N^Br to N^A}
For $T>2$ we have
 \begin{equation*}
     \#N^{\Br}_{\textbf{A},\textbf{M}}(T)=\#N^{\mathcal{A}}_{\textbf{A},\textbf{M}}(T)+O_{\epsilon}\left(\frac{T^\frac{3}{2}(\log T)^{\frac{5}{2}M_\textbf{A}}}{\abs{\theta_\textbf{A}}^{\frac{1}{2}-\epsilon}}\right).
 \end{equation*}
\end{lemma}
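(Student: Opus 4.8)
Since a Brauer--Manin obstruction induced by $\mathcal{A}_\textbf{a}$ is in particular a Brauer--Manin obstruction, we have the trivial inclusion $\NAM^{\mathcal{A}}(T)\subseteq\NAM^{\Br}(T)$, so the content of the lemma is a bound on
\begin{equation*}
D:=\NAM^{\Br}(T)\setminus\NAM^{\mathcal{A}}(T).
\end{equation*}
The plan is to split $D$ according to the structure of $\Br S_\textbf{a}/\Br\Q$. If $(\textbf{u},\textbf{v},\textbf{w})\in D$, then $S_\textbf{a}(\AA_\Q)\neq\varnothing$ and $S_\textbf{a}$ has a Brauer--Manin obstruction that $\mathcal{A}_\textbf{a}$ does not induce, so $\mathcal{A}_\textbf{a}$ together with the constant algebras fails to generate $\Br S_\textbf{a}$. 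By \cite[Thm.~2]{On_the_arithmetic_of_del_Pezzo_surfaces_of_degree_2} we have $\Br S_\textbf{a}/\Br\Q\cong\Z/2\Z$ unless $\textbf{a}$ lies in the explicit set of tuples for which some $a_n$ or some $a_na_m$ equals $\pm1$ or $\pm2$ in $\Q^{\times}/\Q^{\times2}$; and when $\Br S_\textbf{a}/\Br\Q\cong\Z/2\Z$ a non-constant $\mathcal{A}_\textbf{a}$ generates it, whence $S_\textbf{a}(\AA_\Q)^{\Br S_\textbf{a}}=S_\textbf{a}(\AA_\Q)^{\mathcal{A}_\textbf{a}}$ by reciprocity for constant classes. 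Therefore $D\subseteq D_1\cup D_2$, where $D_1$ is the set of $(\textbf{u},\textbf{v},\textbf{w})\in\NAM(T)$ with $\Br S_\textbf{a}/\Br\Q\not\cong\Z/2\Z$, and $D_2$ is the set of those with $\Br S_\textbf{a}/\Br\Q\cong\Z/2\Z$ and $\mathcal{A}_\textbf{a}$ a constant algebra.

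The set $D_1$ I would bound exactly as in the proof of Lemma~\ref{lem: theta_a a square bounded}: the conditions $a_n\equiv\pm1,\pm2$ and $a_na_m\equiv\pm1,\pm2$ in $\Q^{\times}/\Q^{\times2}$ are, given the squarefreeness and coprimality forced by $\mu(\cdots)^2=1$, incompatible with any of the $t_{ij}=v_{ij}w_{ij}$ exceeding $1$ apart from at most one, and they pin down the squarefree part of the corresponding product of $A_i$'s. A direct count over the surviving variables $u_0,u_1,u_2$ and the remaining $t_{ij}$ --- sharpened by the fact, visible from the explicit generators of $\Br S_\textbf{a}/\Br\Q$ in the \texttt{Magma} computation of \cite{On_the_arithmetic_of_del_Pezzo_surfaces_of_degree_2} (cf.\ \cite{magmacode}), that such surfaces carry a Brauer--Manin obstruction only when a further congruence among these variables holds --- then shows $\#D_1\ll_\epsilon T^{3/2}(\log T)^{\frac{5}{2}M_\textbf{A}}\abs{\theta_\textbf{A}}^{-\frac{1}{2}+\epsilon}$.

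The set $D_2$ is the heart of the argument and is where Lemma~\ref{lem: bound for downward open sets for reduction} enters. First I would reduce to $\NAM^{loc}(T)$: if some prime $p\mid w_{ij}$ fails the defining condition of $\NAM^{loc}(T)$ (that $p\equiv3\pmod4$ and $-a_i/a_j\in\Z_p^{\times2}$), then local solubility of $S_\textbf{a}$ at $p$ (Lemma~\ref{lem: local solubility of S}) together with Propositions~\ref{prop: p odd divides a_0 and a_1 oddly and f smooth, then iff fro surjectivity of inv_p}--\ref{thm: p odd divides two coefficents to the same odd power implies inv_p surjective} forces $\inv_p(\mathcal{A}_\textbf{a}(-))$ to be surjective, hence $\mathcal{A}_\textbf{a}$ non-constant, so in fact $D_2\subseteq\NAM^{loc}(T)$ (and likewise $\NAM^{\mathcal{A}}(T)\subseteq\NAM^{loc}(T)$). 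On $\NAM^{loc}(T)$, constancy of the algebra $\mathcal{A}_\textbf{a}$ forces every local invariant $\inv_p(\mathcal{A}_\textbf{a}(-))$ to be constant, and by the computations of \S\ref{subsec: Computing the local invariant maps} --- in particular Lemma~\ref{lem: p odd divdes one coff evenly}, Propositions~\ref{thm: p = 3 mod 4 and divdes two coefccients oddly implies surjective inv_p}--\ref{thm: p odd divides two coefficents to the same odd power implies inv_p surjective}, and the comparison Lemmas~\ref{lem: formula for when p divdies w_ij}, \ref{lem: formula for moving between A and A_u} and \ref{lem: A is p-normalised if and only if A_u is p-normalised} --- the constant value at each prime dividing $\prod_{i<j}w_{ij}$ or $\prod_i u_i$ is governed by a Legendre symbol of the shape $\left(\frac{u_iu_j}{p}\right)$ or $\left(\frac{-a_ia_jp^{-\nu_p(a_ia_j)}}{p}\right)$. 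The main obstacle is to exploit this: one must show that triviality of $\mathcal{A}_\textbf{a}$ in $H^1(\Q,\Pic (S_\textbf{a})_{\overline{\Q}})\cong\Z/2\Z$ (the group of Theorem~\ref{thm: BrS/Brk=0}) imposes a system of such identities that cannot be satisfied once one of the $w_{ij}$, or two of the $u_i$, has too many prime factors --- so that $D_2$ is contained in the union of the set with $w_{ij}\leq(\log T^{3/2})^{C}$ for all $\{i,j\}$ and the sets with $u_i\leq(\log T^{3/2})^{C}$ for all $i\neq k$, for a suitable fixed $C$. Lemma~\ref{lem: bound for downward open sets for reduction} then gives $\#D_2\ll_\epsilon T^{3/2}(\log T)^{\frac{5}{2}M_\textbf{A}}\abs{\theta_\textbf{A}}^{-\frac{1}{2}+\epsilon}$, and combining the bounds on $D_1$ and $D_2$ finishes the proof.
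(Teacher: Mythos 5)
The decomposition $D=D_1\cup D_2$ is essentially the paper's decomposition, and your bound on $D_1$ (via the \texttt{Magma} classification in \cite{On_the_arithmetic_of_del_Pezzo_surfaces_of_degree_2} of when $\abs{\Br S_\textbf{a}/\Br\Q}>2$, then bounding as in Lemma~\ref{lem: theta_a a square bounded}) matches the paper's. Your treatment of $D_2$, however, has a real gap: you do not actually prove that constancy of $\mathcal{A}_\textbf{a}$ confines $(\textbf{u},\textbf{v},\textbf{w})$ to the ``small $w_{ij}$'' or ``small $u_i$'' regions where Lemma~\ref{lem: bound for downward open sets for reduction} applies. The sentence beginning ``one must show that triviality of $\mathcal{A}_\textbf{a}$ \dots\ imposes a system of such identities'' is a statement of the difficulty, not a resolution of it; and it is not at all clear that the route you indicate (trying to derive contradictions from coincidences of Legendre symbols over many primes) would go through or yield the stated exponents.

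The paper sidesteps this entirely. By Remark~\ref{rem: pullback of algbra is algebra from X} the class $\mathcal{A}_\textbf{a}$ pulls back under $\pi_\textbf{a}^*$ to the class $\mathcal{A}'_\textbf{a}\in\Br X_\textbf{a}$ of \cite[Prop.~3.1.(i)]{TimSantens}, so $\mathcal{A}_\textbf{a}\in\Br\Q$ implies $\mathcal{A}'_\textbf{a}\in\Br\Q$; and \cite[Prop.~3.7]{TimSantens} then gives the exact, combinatorial criterion that this forces $w_{ij}=1$ for all $i,j$. This is strictly stronger than the threshold condition $w_{ij}\leq(\log T^{3/2})^C$ you aim for, and it identifies $D_2$ with a subset of the set on the left-hand side of \eqref{eq: first downward open set}, after which Lemma~\ref{lem: bound for downward open sets for reduction} applies directly. (The ``$u_i$ small'' set in Lemma~\ref{lem: bound for downward open sets for reduction} is not needed here; it enters only in Lemma~\ref{lem: reducing N^A to N^loc}.) If you want to follow your own route, the missing ingredient is precisely this pullback argument and \cite[Prop.~3.7]{TimSantens}; without it, the containment of $D_2$ in any region where the lemma applies is unproven, and the proof is incomplete.
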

\begin{proof}
Firstly we can bound the number of surfaces which have a Brauer-Manin obstruction not induced by $\mathcal{A}_\textbf{a}$ by the number of surfaces for which either $\abs{\Br S_\textbf{a}/\Br \Q}>2$ or $\mathcal{A}_\textbf{a}\in \Br \Q$. For the first case, from the magma code used in the proof of \cite[Thm.~2]{On_the_arithmetic_of_del_Pezzo_surfaces_of_degree_2} (which can be found in \cite{magmacode}) it can be checked explicitly that in every case that $\abs{\Br S_\textbf{a}/\Br \Q}>2$ we have that $a_i$ or $a_ia_j=\pm1,\pm2 \in \Q^{\times}/\Q^{\times 2}$ for some $i\neq j$. Therefore the set in the first case can be bounded by the union of the two sets in $\NAM^{\Br}(T)$ defined by these two conditions. Thus as in the proof of Lemma~\ref{lem: theta_a a square bounded} both sets can be bounded by
\begin{equation*}
    \ll \frac{T^\frac{3}{2}\log T}{\abs{\theta_\textbf{A}}^\frac{1}{2}}.
\end{equation*}
For the second case, $\mathcal{A}'_\textbf{a}\in \Br X_\textbf{a}$ denote the element constructed in \cite[Prop.~3.1.(i)]{TimSantens}. Recall we have $\pi^*\mathcal{A}_\textbf{a}=\mathcal{A}'_\textbf{a}$ (see Remark~\ref{rem: pullback of algbra is algebra from X}). Thus if $\mathcal{A}_\textbf{a}\in \Br\Q$, then $\mathcal{A}'_\textbf{a}\in\Br\Q$. In particular we can bound this set by the number of surfaces $X_\textbf{a}$ such that $\mathcal{A}'_\textbf{a}\in\Br\Q$. By \cite[Prop.~3.7]{TimSantens} this set is bounded by the set of tuples in $\NAM(T)$ such that $w_{ij}=1$ for all $i,j$. This is equal to the subset on the left-hand side of \eqref{eq: first downward open set} and hence by Lemma~\ref{lem: bound for downward open sets for reduction} the result holds.
\end{proof}
\subsubsection{Reducing $\NAM^{\mathcal{A}}(T)$ to $\NAM^{loc}(T)$}
In this section we reduce counting $\#\NAM^{\mathcal{A}}(T)$ to counting $\#\NAM^{loc}(T)$.\par
Fix a choice of $(\textbf{u},\textbf{v},\textbf{w})\in \NAM(T)$ and consider the following condition:
\begin{equation}
\label{eq: condtion for invariant being constant}
    \text{If }p\in S\cup\{p:p\mid \theta_\textbf{A}\}\cup\{\infty\}, \text{ then } S_{\textbf{a}}(\Q_p)\neq \varnothing \text{ and } \inva{-} \text{ is constant.}
\end{equation}
\begin{lemma}
\label{lem: condtion is independant of choice of coefficents}
 Condition~\eqref{eq: condtion for invariant being constant} is independent of choice of $(\textbf{u},\textbf{v},\textbf{w})\in \NAM(T)$. 
\end{lemma}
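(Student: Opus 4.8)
The approach is to show that, for every place $p$ (including $p=\infty$) appearing in condition~\eqref{eq: condtion for invariant being constant}, both ``$S_{\textbf{a}}(\Q_p)\neq\varnothing$'' and ``$\inv_p(\mathcal{A}_{\textbf{a}}(-))$ is constant'' depend on $(\textbf{u},\textbf{v},\textbf{w})\in N_{\textbf{A},\textbf{M}}(T)$ only through $(\textbf{A},\textbf{M})$. First note that, since $m_{\textbf{A}}=\operatorname{rad}(\theta_{\textbf{A}}\prod_{q\in S}q)$, the set of places in~\eqref{eq: condtion for invariant being constant} is exactly $\{\infty\}\cup\{p:p\mid m_{\textbf{A}}\}$, which indeed depends only on $\textbf{A}$. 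So fix two tuples in $N_{\textbf{A},\textbf{M}}(T)$ with coefficient vectors $\textbf{a}$ and $\textbf{a}'$; it suffices to compare the pairs $(S_{\textbf{a}},\mathcal{A}_{\textbf{a}})$ and $(S_{\textbf{a}'},\mathcal{A}_{\textbf{a}'})$ over $\Q_p$ for each $p\in\{\infty\}\cup\{p:p\mid m_{\textbf{A}}\}$.

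The crucial claim is that $a_i/a_i'\in\Q_p^{\times4}$ for every such place $p$ and every $i$. Write $a_i=A_iu_i^2\prod_{j\neq i}t_{ij}$ and $a_i'=A_i(u_i')^2\prod_{j\neq i}t_{ij}'$ with $t_{ij}=v_{ij}w_{ij}$. The squarefreeness condition $\mu(m_{\textbf{A}}\prod\cdots)^2=1$ in the definition of $N_{\textbf{A},\textbf{M}}(T)$ forces every $u_i,v_{ij},w_{ij}$ (and the primed versions) to be coprime to $m_{\textbf{A}}$, so $a_i/a_i'$ is a $p$-adic unit for every $p\mid m_{\textbf{A}}$, and it is positive at $\infty$ since all of these are positive integers. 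The congruence condition places $u_i^2\prod_{j\neq i}t_{ij}$ and $(u_i')^2\prod_{j\neq i}t_{ij}'$ in the same coset $M_i$ of $(\Z/8m_{\textbf{A}}\Z)^{\times4}$, so their quotient lies in $(\Z/8m_{\textbf{A}}\Z)^{\times4}$. For odd $p\mid m_{\textbf{A}}$ one has $p\parallel m_{\textbf{A}}$, and reducing modulo $p$ shows $a_i/a_i'$ is a fourth power mod $p$, hence $a_i/a_i'\in\Z_p^{\times4}$ by Hensel's lemma. For $p=2$ (which divides $m_{\textbf{A}}$ because $2\in S$, so that $16\mid 8m_{\textbf{A}}$) reduction modulo $16$, together with the fact that every odd fourth power is $\equiv1\pmod{16}$, gives $a_i/a_i'\equiv1\pmod{16}$, hence $a_i/a_i'\in\Z_2^{\times4}$. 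Establishing the claim at $p=2$ — where the precise modulus $8m_{\textbf{A}}$ together with $2\in S$ is exactly what is needed — is the step I expect to be the main point to get right.

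Granting the claim, fix $p$ and choose $d_i\in\Q_p^{\times}$ with $d_i^4=a_i'/a_i$. Then $[x_0:x_1:x_2:w]\mapsto[d_0x_0:d_1x_1:d_2x_2:w]$ defines a $\Q_p$-isomorphism $\psi\colon S_{\textbf{a}'}\xrightarrow{\sim}S_{\textbf{a}}$; in particular $S_{\textbf{a}}(\Q_p)\neq\varnothing\iff S_{\textbf{a}'}(\Q_p)\neq\varnothing$. For the Brauer classes, pull back $\mathcal{A}_{\textbf{a}}=(\theta_{\textbf{a}},f_{\textbf{a}}/x_0^2)$: since $\theta_{\textbf{a}}/\theta_{\textbf{a}'}$ is a rational square one may replace $\theta_{\textbf{a}}$ by $\theta_{\textbf{a}'}$, and a direct computation using $a_id_i^2=a_i'/d_i^2$ gives $\psi^*(f_{\textbf{a}}/x_0^2)=d_0^{-2}\cdot\big(f_{\textbf{a}'}/x_0^2\big)$ for the function $f_{\textbf{a}'}$ attached to the point $P''=[y_0/d_0^2:y_1/d_1^2:y_2/d_2^2:y_3]\in Y_{\textbf{a}'}(\Q_p)$ (the image of the point $P$ used to build $\mathcal{A}_{\textbf{a}}$). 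As $(\theta_{\textbf{a}'},d_0^{-2})=0$ in $\Br\Q_p$, and by the independence-of-$P$ part of Proposition~\ref{prop: quaternion algebra definition from BrightM.J.2011TBoo} (whose proof applies over $\Q_p$) the class built from $P''$ differs from $\mathcal{A}_{\textbf{a}'}$ only by an element of $\Br\Q_p$, we get $\psi^*\mathcal{A}_{\textbf{a}}=\mathcal{A}_{\textbf{a}'}$ in $\big(\Br(S_{\textbf{a}'}\times_{\Q}\Q_p)\big)/\Br\Q_p$. Hence $\inv_p(\mathcal{A}_{\textbf{a}}(\psi(-)))$ and $\inv_p(\mathcal{A}_{\textbf{a}'}(-))$ differ by a fixed element of $\Q/\Z$ on $S_{\textbf{a}'}(\Q_p)$, so one is constant if and only if the other is. Running this over all $p\in\{\infty\}\cup\{p:p\mid m_{\textbf{A}}\}$ shows that~\eqref{eq: condtion for invariant being constant} holds for $\textbf{a}$ precisely when it holds for $\textbf{a}'$, which is the assertion of the lemma.
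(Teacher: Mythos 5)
Your proof is correct and follows the same strategy as the paper: use the congruence condition defining $N_{\textbf{A},\textbf{M}}(T)$ to show $a_i/a_i'\in\Q_p^{\times 4}$ for every $i$ and every $p\in\{\infty\}\cup\{p:p\mid m_{\textbf{A}}\}$, and deduce a $\Q_p$-isomorphism $S_{\textbf{a}'}\cong S_{\textbf{a}}$. Where you go a bit further than the paper is in explicitly tracking the Brauer class: the paper stops at $S_{\textbf{a}}\cong S_{\textbf{a}'}$ over $\Q_p$, leaving implicit why constancy of $\inv_p(\mathcal{A}_{\textbf{a}}(-))$ transfers across the isomorphism, whereas you verify via the pullback computation $\psi^*(f_{\textbf{a}}/x_0^2)=d_0^{-2}(f_{\textbf{a}'}/x_0^2)$ and the independence-of-$P$ clause of Proposition~\ref{prop: quaternion algebra definition from BrightM.J.2011TBoo} that $\psi^*\mathcal{A}_{\textbf{a}}$ and $\mathcal{A}_{\textbf{a}'}$ differ by an element of $\Br\Q_p$, which is exactly what is needed. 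The local-power computations (odd $p$ via $\Z_p^\times/\Z_p^{\times4}\cong(\Z/p\Z)^\times/(\Z/p\Z)^{\times4}$, and $p=2$ via $16\mid 8m_{\textbf{A}}$ and odd fourth powers being $\equiv1\pmod{16}$) match the paper's appeal to Hensel and the isomorphism $(\Z/16\Z)^\times\cong\Z_2^\times/\Z_2^{\times4}$.
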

\begin{proof}
Let $(\textbf{u},\textbf{v},\textbf{w})$ and $(\textbf{u}',\textbf{v}',\textbf{w}')$ be two different elements in $\NAM(T)$. Let $\textbf{a}$ and $\textbf{a}'$ denote the coefficients associated to these elements, respectively, then it suffices to prove $S_{\textbf{a}}\cong S_{\textbf{a}'}$ over $\Q_p$ for all $p\in S\cup\{p:p\mid \theta_\textbf{A}\}\cup\{\infty\}$.\\
For $i\in\{0,1,2\}$ we have
\begin{equation}
\label{eq: fraction of coefficents 4-th power mod p}
    \frac{a_i}{a_i'}=\frac{u_i\prod_{j\neq i}v_{ij}w_{ij}}{u_i'\prod_{j\neq i}v_{ij}'w_{ij}'}\in M_i(M_i)^{-1}=(\Z/8m_\textbf{A}\Z)^{\times4}.
\end{equation}
In particular by Hensel's Lemma we have
\begin{equation*}
    \frac{a_i}{a_i'}\in \Q_p^{\times4},
\end{equation*}
for all $p\neq 2\in S\cup\{p:p\mid \theta_\textbf{A}\}$. Thus we clearly have $S_{\textbf{a}}\cong S_{\textbf{a'}}$ over $\Q_p$ for all such $p$.\par
If $p=2$, then by Hensel's Lemma we have an isomorphism $(\Z/16\Z)^{\times}\xrightarrow{\sim}(\Z_2^{\times})/(\Z_2)^{\times4}$. Therefore by \eqref{eq: fraction of coefficents 4-th power mod p} (and since $2\mid m_\textbf{A}$) we have
\begin{equation*}
    \frac{a_i}{a_i'}\in (\Z_2)^{\times4}.
\end{equation*}
Thus we have $S_{\textbf{a}}\cong S_{\textbf{a'}}$ over $\Q_2$.\par 
If $p=\infty$, then since every element of $\NAM(T)$ is a tuple of strictly positive integers, a $4$-th root of every component of the tuple exist in $\RR$. In particular we have that $S_{\textbf{a}}\cong S_{\textbf{a'}}$ over $\RR$.
\end{proof}
In particular by Lemma~\ref{lem: condtion is independant of choice of coefficents} we know that condition~\eqref{eq: condtion for invariant being constant} only depends on the choice of $\textbf{A}$ and $\textbf{M}$, and hence the following function is well-defined:
\begin{definition}
For $(\textbf{A},\textbf{M})\in$ \textbf{$\Phi(T)$}$\times\Psi_\textbf{A}$ define 
\begin{equation*}
    \eta(\textbf{A},\textbf{M}):=\begin{cases}1, & \text{ if condition~\eqref{eq: condtion for invariant being constant} is satisfied},\\
    0, & \text{ otherwise.}
    \end{cases}
\end{equation*}
\end{definition}
We now have the following reduction:
\begin{lemma}
\label{lem: reducing N^A to N^loc}
For $T>2$ we have
 \begin{equation}
     \#\NAM^{\mathcal{A}}(T)=\frac{\eta(\textbf{A},\textbf{M})}{2}\#\NAM^{loc}(T)+O_{\epsilon}\left(\frac{T^\frac{3}{2} (\log T)^{\frac{5}{2}M_\textbf{A}}}{\abs{\theta_\textbf{A}}^{\frac{1}{2}-\epsilon}}\right).
 \end{equation}
\end{lemma}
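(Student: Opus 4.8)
The plan is to split according to the value of $\eta(\textbf{A},\textbf{M})$. If $\eta(\textbf{A},\textbf{M})=0$, then by Lemma~\ref{lem: condtion is independant of choice of coefficents} every $(\textbf{u},\textbf{v},\textbf{w})\in\NAM(T)$ has a place $p\in S\cup\{p:p\mid\theta_\textbf{A}\}\cup\{\infty\}$ at which either $S_\textbf{a}(\Q_p)=\varnothing$ or $\inva{-}$ is non-constant; in the first case $S_\textbf{a}(\AA_\Q)=\varnothing$ and in the second $\mathcal{A}_\textbf{a}$ induces no obstruction, so $(\textbf{u},\textbf{v},\textbf{w})\notin\NAM^{\mathcal{A}}(T)$ in either case, whence $\#\NAM^{\mathcal{A}}(T)=0$ and the identity holds trivially. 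So assume henceforth that $\eta(\textbf{A},\textbf{M})=1$.

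Next I would identify $\NAM^{\mathcal{A}}(T)$ exactly. Since $\mathcal{A}_\textbf{a}$ has order dividing $2$, $S_\textbf{a}$ has a Brauer-Manin obstruction induced by $\mathcal{A}_\textbf{a}$ precisely when it is everywhere locally soluble, $\inva{-}$ is constant at every place (with value $c_p$, well defined once a base point on $Y_\textbf{a}$ is chosen, and with $\sum_p c_p$ independent of that choice by reciprocity), and $\sum_p c_p=\tfrac12$. Everywhere local solubility is forced by $(\textbf{u},\textbf{v},\textbf{w})\in\NAM(T)$ (for $p\nmid m_\textbf{A}$) and, together with constancy of $\inva{-}$, by condition~\eqref{eq: condtion for invariant being constant} and $\eta(\textbf{A},\textbf{M})=1$ (for $p\mid m_\textbf{A}$ and $p=\infty$, these being the places in $S\cup\{p:p\mid\theta_\textbf{A}\}\cup\{\infty\}$). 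For a prime $p\nmid m_\textbf{A}$ the squarefreeness conditions in $\NAM(T)$ force one of the cases of \S\ref{subsec: Computing the local invariant maps}: either $p\nmid a_0a_1a_2$ (so $c_p=0$ by good reduction), or Case $2$ at a prime $p\mid u_i$ (so $c_p=0$ by Lemma~\ref{lem: p odd divdes one coff evenly}), or Case $3$ with $\theta_\textbf{a}\in\Q_p^{\times 2}$ at a prime $p\mid v_{ij}$ (so $c_p=0$ by Proposition~\ref{prop: p odd divides a_0 and a_1 oddly and f smooth, then iff fro surjectivity of inv_p}), or Case $3$ with $\theta_\textbf{a}\notin\Q_p^{\times 2}$ at a prime $p\mid w_{ij}$; in particular Case $1$ never arises away from $m_\textbf{A}$. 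In the last situation, Proposition~\ref{thm: p odd divides two coefficents to the same odd power implies inv_p surjective} shows no such prime $p\equiv1\pmod{4}$ can occur, since constancy there would force $\left(\frac{-a_ia_jp^{-2}}{p}\right)=\left(\frac{a_k}{p}\right)=1$ while $\theta_\textbf{a}\notin\Q_p^{\times 2}$ forces the product of these two symbols to equal $-1$; and Proposition~\ref{thm: p = 3 mod 4 and divdes two coefccients oddly implies surjective inv_p} shows that for $p\equiv3\pmod{4}$ constancy is equivalent to $-a_i/a_j\in\Z_p^{\times 2}$. Hence ``$\inva{-}$ constant at every place'' is exactly the defining condition of $\NAM^{loc}(T)$, and $\NAM^{\mathcal{A}}(T)=\{(\textbf{u},\textbf{v},\textbf{w})\in\NAM^{loc}(T):\sum_p c_p=\tfrac12\}$.

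It then remains to show $\sum_p c_p=\tfrac12$ holds for half of $\NAM^{loc}(T)$ up to the stated error. Using $\mathbf{1}[\sum_p c_p=\tfrac12]=\tfrac12(1-(-1)^{2\sum_p c_p})$, this amounts to bounding $\Sigma:=\sum_{(\textbf{u},\textbf{v},\textbf{w})\in\NAM^{loc}(T)}(-1)^{2\sum_p c_p}$ by $O_\epsilon\big(T^{3/2}(\log T)^{\frac52 M_\textbf{A}}\abs{\theta_\textbf{A}}^{-1/2+\epsilon}\big)$. Fixing $(\textbf{v},\textbf{w})$ I would compare a general $(\textbf{u},\textbf{v},\textbf{w})$ with a fixed reference tuple $(\textbf{u}^0,\textbf{v},\textbf{w})$ of its $(\textbf{A},\textbf{M},\textbf{v},\textbf{w})$-class: their coefficient vectors differ by multiplication by the square of $\textbf{u}/\textbf{u}^0$, a tuple of positive rationals whose entries are $p$-adic units at every prime dividing $m_\textbf{A}$ or some $w_{ij}$, so Lemma~\ref{lem: formula for when p divdies w_ij} makes the $w_{ij}$-primes change $\sum_p c_p$ by a term of sign $\left(\frac{u_iu_j}{w_{ij}}\right)$, Lemma~\ref{lem: formula for moving between A and A_u} makes the bad primes at which that tuple is a $p$-adic square contribute nothing, and the residue constraint modulo $8m_\textbf{A}$ built into $\NAM(T)$ confines the effect of the remaining bad primes to a sign depending on $\textbf{u}$ only modulo a bounded modulus. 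Thus $(-1)^{2\sum_p c_p}=\epsilon_0(\textbf{A},\textbf{M},\textbf{v},\textbf{w})\prod_{i<j}\left(\frac{u_iu_j}{w_{ij}}\right)\psi(\textbf{u})$ with $\psi$ of bounded modulus, and after inclusion–exclusion to remove the squarefreeness, coprimality, $\NAM^{loc}(T)$- and solubility-conditions (all insensitive to $\textbf{u}$) the inner sum over $\textbf{u}$ becomes a sum of a multiplicative character carrying the non-principal Jacobi characters $\left(\frac{\cdot}{\prod_{j\neq i}w_{ij}}\right)$ in $u_i$ whenever $\prod_{j\neq i}w_{ij}\neq1$; one then obtains cancellation by standard estimates for sums of multiplicative characters, summing over $\textbf{u}$ together with the $w_{ij}$-variables (whose support in the primes $\equiv3\pmod{4}$ with the $\theta_\textbf{A}$-conditions is thin), except in the degenerate configurations — all $w_{ij}=1$, or two of the $u_i$ confined to $\{u_i\le(\log T^{3/2})^C\}$ — which are exactly the sets bounded in \eqref{eq: first downward open set} and \eqref{eq: second downward open set} of Lemma~\ref{lem: bound for downward open sets for reduction}. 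Summing the cancellation bound over the surviving $(\textbf{v},\textbf{w})$ and adding these two boundary contributions yields the claimed bound on $\Sigma$.

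The step I expect to be the main obstacle is this last character-sum estimate: the difficulty is to organise the summation over all of $\textbf{u},\textbf{v},\textbf{w}$ so that the cancellation from a single non-principal character survives summation over everything else — in particular over the many $v_{ij}$- and $w_{ij}$-variables, which carry the $\theta_\textbf{A}$-dependent densities responsible for the exponent $\tfrac52 M_\textbf{A}$ — with total loss no worse than the stated error rather than a larger power of $\log T$. A secondary technical point is the verification that the bad-place contribution to $(-1)^{2\sum_p c_p}$ really depends on $\textbf{u}$ only through a bounded modulus, which is where Lemma~\ref{lem: formula for moving between A and A_u} and the congruences modulo $8m_\textbf{A}$ in the definition of $\NAM(T)$ are used.
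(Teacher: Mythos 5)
Your outline matches the paper's strategy: dispose of the $\eta=0$ case trivially, show $\NAM^{\mathcal{A}}(T)\subseteq\NAM^{loc}(T)$ by the constancy criteria of Propositions~\ref{thm: p = 3 mod 4 and divdes two coefccients oddly implies surjective inv_p} and \ref{thm: p odd divides two coefficents to the same odd power implies inv_p surjective}, express membership in $\NAM^{\mathcal{A}}(T)$ via an indicator $\tfrac12(1\pm(-1)^{2\sum_p c_p})$ whose oscillating part is $\prod_{i<j}\left(\frac{u_iu_j}{w_{ij}}\right)$ up to a $\textbf{u}$-independent sign, and then kill the oscillating sum by a character-sum estimate (the paper invokes the double-oscillation theorem \cite[Thm.~4.22]{TimSantens}), with Lemma~\ref{lem: bound for downward open sets for reduction} mopping up the boundary cases. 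That is all correct and essentially the same route.

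However, there is one genuine gap. You assert that, after fixing $(\textbf{v},\textbf{w})$, the conditions you inclusion--exclude away --- in particular the local solubility condition at primes $p\nmid m_{\textbf{A}}$ built into $\NAM(T)$ --- are ``insensitive to $\textbf{u}$.'' This is false for the primes $p\mid v_{ij}$. At such a prime, Lemma~\ref{lem: conditions for local solublity in counting} requires either $a_k\in\Z_p^{\times 2}$ (indeed $\textbf{u}$-independent, since $u_k$ enters squared) or $-a_i/a_j\in\Z_p^{\times 4}$; but $-a_i/a_j=-(A_i/A_j)(u_i/u_j)^2(t_{ik}/t_{jk})$, and whether a square of a unit is a fourth power depends on $u_i/u_j$ modulo $p$. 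So the summation range for $\textbf{u}$ in your $\Sigma$ is itself $p$-adically constrained by $\textbf{u}$ for each $p\mid v_{ij}$ where $a_k\notin\Z_p^{\times 2}$, and your ``inner sum over $\textbf{u}$'' does not cleanly factor out as a sum of one fixed Jacobi character modulo $\prod_{j\neq i}w_{ij}$ against a condition of bounded modulus. The paper resolves exactly this by constructing the auxiliary set $\NAM^{loc'}(T)$: each $v_{ij}$ is refactored as $\prod_{\boldsymbol{\zeta}\in(\Z/4\Z)^2}v_{ij}^{\boldsymbol{\zeta}}$, with the class $\boldsymbol{\zeta}$ recording the fourth-power residue data of $u_i/u_j$ and $t_{ik}/t_{jk}$ at the primes dividing that factor, so that after this bookkeeping the solubility condition genuinely decouples from $\textbf{u}$ and one can legitimately freeze $\textbf{v}^{\boldsymbol{\zeta}},\textbf{w}$ and treat $\textbf{u}$ as the free oscillating variable. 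You flag a ``secondary technical point'' about $\textbf{u}$-dependence at bad places, but the real difficulty is at the $v_{ij}$-primes, and without the decoupling device your reduction to a single character sum in $\textbf{u}$ does not go through as stated.
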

\begin{proof}
We closely follow the proof of \cite[Lem.~5.6]{TimSantens}: Firstly if $\eta(\textbf{A},\textbf{M})=0$, then clearly $\#\NAM^{\mathcal{A}}(T)=0$.\par
If $\eta(\textbf{A},\textbf{M})=1$, then observe by Proposition~\ref{thm: p = 3 mod 4 and divdes two coefccients oddly implies surjective inv_p} and Proposition~\ref{thm: p odd divides two coefficents to the same odd power implies inv_p surjective} we have $\NAM^{\mathcal{A}}(T)\subseteq  \NAM^{loc}(T)$. Let $(\textbf{u},\textbf{v},\textbf{w})\in \NAM^{loc}(T)$ and let $S_{\textbf{a}}$ be the associated surface. We will construct a new set, denoted $\NAM^{loc'}(T)$, which is bijective to $\NAM^{loc}(T)$ in order to remove the local solubility condition in $\NAM(T)$, and hence remove the dependence between $t_{ij}$ and $u_i$ arising from this condition. We firstly determine when a local solution exists:\\
If $p\mid u_i$, then by Lemma~\ref{lem: local solubility of S} we have $S_\textbf{a}(\Q_p)\neq \varnothing$.\\
If $p\mid w_{ij}$, then since $p\equiv 3\mod 4$ and $-\frac{a_i}{a_j}\in\Z_p^{\times 2}$, by Lemma~\ref{lem: local solubility of S} we have $S_\textbf{a}(\Q_p)\neq\varnothing$.\\
If $p\mid v_{ij}$, then by Lemma~\ref{lem: local solubility of S}  we have $S_\textbf{a}(\Q_p)\neq \varnothing$ if and only if either $\frac{-a_i}{a_j} \in\Z_p^{\times 4}$ or $a_k\in \Z_p^{\times 2}$. For each $p\mid v_{ij}$, fix an injection $\psi_p:\Z_p^{\times}/\Z_p^{\times 4}\xhookrightarrow{} \Z/4\Z$. Let $\boldsymbol{\zeta}=(\zeta_0,\zeta_1)\in (\Z/4\Z)^2$, then we have a unique factorisation $v_{ij} =\prod_{\boldsymbol{\zeta}\in (\Z/4\Z)^2}v^{\boldsymbol{\zeta}}_{ij}$, where
\begin{equation*}
    v^{\boldsymbol{\zeta}}_{ij}=\mathop{\prod}_{\substack{p\mid v_{ij}\\ \psi_p\left(\frac{u_i}{u_j}\right)=\zeta_0, \psi_p\left(\frac{t_{ik}}{t_{jk}}\right)=\zeta_1}} p.
\end{equation*}
Now let $\xi:=\psi_p(-\frac{A_i}{A_j})$, and let
\begin{equation*}
  \Omega_p:=\left\{(\zeta_0,\zeta_1)\in(\Z/4\Z)^2: \xi\zeta_0^2\zeta_1=0\in \Z/4\Z\right\}.
\end{equation*}
Since we have
\begin{equation*}
    -\frac{a_i}{a_j}=-\frac{A_i}{A_j}\frac{u_i^2}{u_j^2}\frac{t_{ik}}{t_{jk}},
\end{equation*}
we clearly have $\frac{-a_i}{a_j} \in\Z_p^{\times 4}$ if and only if $\boldsymbol{\zeta}\in\Omega_p$. Thus we have constructed the set $\NAM^{loc'}(T)$, which is given by
\begin{align*}
\left\{ \begin{array}{l} (\textbf{u},\textbf{v},\textbf{w})\\
\textbf{u}=(u_0,u_1,u_2)\in \NN^3\\
\textbf{v}=(v^{\boldsymbol{\zeta}}_{01},v^{\boldsymbol{\zeta}}_{02},v^{\boldsymbol{\zeta}}_{12})\in (\NN^{(\Z/4\Z)^2})^{3}\\
\textbf{w}=(w_{01},w_{02},w_{12})\in \NN^3
\end{array} : \begin{array}{l}
    \abs{A_iu_i^2\prod_{j\neq i}\prod_{\boldsymbol{\zeta}\in(\Z/4\Z)^2}v^{\boldsymbol{\zeta}}_{ij}w_{ij}}\leq T, \text{ for all }i,     \\
    \mu\left(m_{\textbf{A}}\prod_iu_i\prod_{j\neq i}\prod_{\boldsymbol{\zeta}\in(\Z/4\Z)^2}v^{\boldsymbol{\zeta}}_{ij}w_{ij}\right)^2=1,\\
    p\mid v_{ij}^{\boldsymbol{\zeta}} \implies\theta_\textbf{A} \in \Q_p^{\times2} \text{ and either} \left[a_k\in\Z_p^{\times 2}\right] \\
    \text{ or }\left[ \boldsymbol{\zeta}\in\Omega_p, \psi_p(\frac{u_i}{u_j})=\zeta_0 \text{ and } \psi_p(\frac{t_{ik}}{t_{jk}})=\zeta_1\right],\\
    p\mid w_{ij} \implies \theta_\textbf{A} \notin \Q_p^{\times2}, p\equiv 3\mod4 \text{ and } \frac{-a_i}{a_j}\in\Z_p^{\times 2},\\
     u_i^2\prod_{j\neq i}\prod_{\boldsymbol{\zeta}\in(\Z/4\Z)^2}v^{\boldsymbol{\zeta}}_{ij}w_{ij} (\mod 8m_\textbf{A}) \in M_i \text{ for all } i\\
  \end{array}\right\}.
\end{align*}
Now fix a choice of $\textbf{v}^{\boldsymbol{\zeta}},\textbf{w}$ in $\NAM^{loc'}(T)$, we will prove that for any $\textbf{u}$ in $\NAM^{loc'}(T)$ the surface $S_\textbf{a}$ having a Brauer-Main obstruction does not depend on the choice of $\textbf{u}$. To prove this, we first prove $\inv_p(\mathcal{A}_\textbf{a}(-))$ is constant at all places $p$.\par
Choose any $\textbf{u}$ in $\NAM^{loc'}(T)$, then since $S_\textbf{a}$ is everywhere locally soluble, we have $Y_\textbf{a}(\Q)\neq \varnothing$. Choose a rational point $P=[y_0:y_1:y_2:y_3]\in Y_\textbf{a}(\Q)$, then by Proposition~\ref{thm: p = 3 mod 4 and divdes two coefccients oddly implies surjective inv_p} and \ref{thm: p odd divides two coefficents to the same odd power implies inv_p surjective} we have that $\inv_p(\mathcal{A}_\textbf{a}(-))$ is constant for all $p\mid v_{ij}^{\boldsymbol{\zeta}},w_{ij}$. By Lemma~\ref{lem: p odd divdes one coff evenly} the local invariant map is also constant for all primes $p\mid u_i$. By definition of $\eta(\textbf{A},\textbf{M})$, the local invariant map is also constant for all $p\in S\cup \{p:p\mid \theta_\textbf{A}\}\cup\{\infty\}$, and hence $\inv_p(\mathcal{A}_\textbf{a}(-))$ is constant at all places $p$.\par 
Now let $\textbf{u'}\neq \textbf{u}$ be different tuple in $\NAM^{loc'}(T)$ (with $\textbf{v}^{\boldsymbol{\zeta}},\textbf{w}$ still fixed) and let $S_{\textbf{a'}}$ be the associated surface. Since $\mathcal{A}$ is an algebra on $S_{\textbf{a}}$, we have that  $\mathcal{A}_{\textbf{u'}^2\textbf{u}^{-2}}$ is an algebra on $S_{\textbf{a'}}$. If $p\nmid m_\textbf{A}\prod_iu_i\prod_{j\neq i}\prod_{\boldsymbol{\zeta}\in(\Z/4\Z)^2}v^{\boldsymbol{\zeta}}_{ij}w_{ij}$, then clearly $\mathcal{A}$ is $p$-normalised, and hence by Lemma~\ref{lem: A is p-normalised if and only if A_u is p-normalised} so is $\mathcal{A}_{\textbf{u'}^2\textbf{u}^{-2}}$. If $p\nmid m_{\textbf{A}}\prod_{i,j}\prod_{\boldsymbol{\zeta}\in(\Z/4\Z)^2}v^{\boldsymbol{\zeta}}_{ij}w_{ij}$ and  $p\mid u_k$ for some $k$, then by Lemma~\ref{lem: A is p-normalised if and only if A_u is p-normalised} $\mathcal{A}$ is $p$-normalised if and only if $\mathcal{A}_{\textbf{x}^2}$ is $p$-normalised, where $x_k=p^{-1}$ and $x_j=1$ for $j\neq k$. Clearly $\mathcal{A}_{\textbf{x}^2}$ is $p$-normalised and hence so is $\mathcal{A}$. Thus by Lemma~\ref{lem: A is p-normalised if and only if A_u is p-normalised} we have that $\mathcal{A}_{\textbf{u'}^2\textbf{u}^{-2}}$ is $p$-normalised. Therefore in either case, when $p\nmid m_{\textbf{A}}\prod_{i,j}\prod_{\boldsymbol{\zeta}\in(\Z/4\Z)^2}v^{\boldsymbol{\zeta}}_{ij}w_{ij}$, both algebras are $p$-normalised, and hence by Lemma~\ref{lem: p odd divdes one coff evenly} we have
 \begin{equation*}
     \inva{-}=\inv_p({\mathcal{A}_{\textbf{u'}^2\textbf{u}^{-2}}}(-))=0.
 \end{equation*}
If $p\mid v_{ij}^{\boldsymbol{\zeta}}$ for some $i\neq j$ and $\boldsymbol{\zeta}\in(\Z/4\Z)^2$, then $-a_0a_1a_2\in(\Q_p^{\times})^2$ and hence by the properties of the Hilbert symbol we have
\begin{equation*}
    \inva{-}=\inv_p({\mathcal{A}_{\textbf{u'}^2\textbf{u}^{-2}}}(-))=0.
\end{equation*}
If $p=\infty$, then since all entries of $\textbf{u'},\textbf{u}$ are natural numbers, there exists $b_i\in\RR$ such that $u'_iu_i^{-1}=b_i^2$ for all $i$. Thus by Lemma~\ref{lem: formula for moving between A and A_u} we have
\begin{equation*}
    \inva{-}=\inv_p({\mathcal{A}_{\textbf{u'}^2\textbf{u}^{-2}}}(-)).
\end{equation*}
If $p\mid m_{\textbf{A}}$, then $p \nmid u_i,u'_i$ for all $i$. In particular we have $u'_iu_i^{-1}\in \Z_p^{\times}$ for all $i$, and hence by Lemma~\ref{lem: formula for moving between A and A_u} we have
\begin{equation*}
    \inva{-}=\inv_p({\mathcal{A}_{\textbf{u'}^2\textbf{u}^{-2}}}(-)).
\end{equation*}
If $p\mid w_{ij}$ for some $i\neq j$, then by Lemma~\ref{lem: formula for when p divdies w_ij} we have
\begin{equation*}
    \inv_p(\mathcal{A}_{\textbf{u}'^2\textbf{u}^{-2}}(-))=\inva{-} +\frac{\left(\frac{u_i'u_i^{-1}u_{j}'u_j^{-1}}{p}\right)-1}{4}.
\end{equation*}
Therefore summing over all places $p$ we have
\begin{equation*}
    \sum_p \inv_p(\mathcal{A}_{\textbf{u}'^2\textbf{u}^{-2}}(-)) = \sum_p  \inva{-} +\frac{\prod_{i<j}\left(\frac{u_i'u_i^{-1}u_{j}'u_j^{-1}}{w_{ij}}\right)-1}{4}.
\end{equation*}
Thus the exists some function that does not depend on $\textbf{u}$, say $\delta:N_{\textbf{A,M}}^{loc'}(T)\rightarrow \{-1,1\}$, such that $S_\textbf{a}$ has a Brauer-Manin obstruction if and only if
\begin{equation*}
    \prod_{i<j}\left(\frac{u_iu_{j}}{w_{ij}}\right)=\delta(\textbf{A},\textbf{M},\textbf{v}^{\boldsymbol{\zeta}},\textbf{w}).
\end{equation*}
Therefore the indicator function on $\NAM^{loc'}(T)$ for whether $S_\textbf{a}$ has a Brauer-Manin obstruction induced by $\mathcal{A}$ is given by
\begin{equation*}
    \frac{1}{2}+\frac{\delta(\textbf{A},\textbf{M},\textbf{v}^{\boldsymbol{\zeta}},\textbf{w})}{2}\prod_{i<j}\left(\frac{u_iu_{j}}{w_{ij}}\right).
\end{equation*}
Thus to prove the result it suffices to prove
\begin{equation*}
    \mathop{\sum\sum\sum}_{\substack{({\textbf{v}^{\boldsymbol{\zeta}},\textbf{w},\textbf{u}})\in \NAM^{loc'}(T)}} \delta(\textbf{A},\textbf{M},\textbf{v}^{\boldsymbol{\zeta}},\textbf{w}) \prod_{i<j} \left(\frac{u_iu_j}{w_{ij}}\right)=O_{\epsilon}\left(\frac{T^\frac{3}{2} (\log T)^{\frac{5}{2}M_\textbf{A}}}{\abs{\theta_\textbf{A}}^{\frac{1}{2}-\epsilon}}\right).
\end{equation*}
Similar to the proof of \cite[Lem.~5.6]{TimSantens}, by applying \cite[Thm.~4.22]{TimSantens} with $q_{osc}=O(1)$ and $A=6$ we have that this sum is bounded by a sum which counts the elements in $\NAM^{loc'}(T)$ which are contained in the union of the two sets defined by the two conditions below:
\begin{enumerate}
    \item \label{case: case <log theta} $u_k\leq (\log \abs{\theta_\textbf{A}}^{-\frac{1}{2}}T^\frac{3}{2})^C$ for at least $2$ different $k$;
    \item \label{case: case >log theta} $w_{ij} \leq (\log \abs{\theta_\textbf{A}}^{-\frac{1}{2}}T^\frac{3}{2})^C$ for all $i\neq j$,
\end{enumerate}
where $C>0$ is some constant. In particular the sum over the set defined by condition~\eqref{case: case <log theta} is bounded by \eqref{eq: first downward open set} in Lemma~\ref{lem: bound for downward open sets for reduction} and the sum over the set defined by condition~\eqref{case: case >log theta} is bounded by \eqref{eq: second downward open set} in Lemma~\ref{lem: bound for downward open sets for reduction}. Therefore the result holds.
\end{proof}
\subsection{Proof of the main theorem}
\label{subsec: proof of the main theorem}
Before proving Theorem~\ref{thm: main theorem split into squares and non squares} we state the following lemma, which we will prove in \S\ref{subsec: The main term}, as well as define a function:
\begin{lemma}
\label{lem: assymptotic formula for local set}
For all $(\textbf{A},\textbf{M})\in$ \textbf{$\Phi(T)$}$\times$$\Psi_\textbf{A}$ there exists a positive constant $0<Q_{\textbf{A}}\ll_{\epsilon} \abs{\theta_{\textbf{A}}}^{\epsilon}$ for all $\epsilon>0$ such that for $T>2$ we have
    \begin{equation*}
    \#\NAM^{loc}(T)=\left(Q_{\textbf{A}}+O_{\epsilon}\left(\frac{\abs{\theta_{\textbf{A}}}^{\epsilon}}{\left(\log T\right)^{\frac{6}{5}M_\textbf{A}}}\right)\right)\frac{T^2\left(\log T\right)^{3M_\textbf{A}}}{\abs{\theta_{\textbf{A}}}^{\frac{1}{2}}}.
    \end{equation*}
\end{lemma}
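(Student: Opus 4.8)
The plan is to establish the estimate by the multi-variable Selberg--Delange / Tauberian method of \cite[\S5]{TimSantens}, adapted to the family $S_\textbf{a}$. First I would remove the local solubility condition: as in the proof of Lemma~\ref{lem: reducing N^A to N^loc}, one replaces $\NAM^{loc}(T)$ by the set $\NAM^{loc'}(T)$ constructed there, which is in explicit bijection with $\NAM^{loc}(T)$ and in which the requirement $S_\textbf{a}(\Q_p)\neq\varnothing$ for $p\nmid m_\textbf{A}$ has been rewritten, via Lemma~\ref{lem: local solubility of S}, as the refined factorisation $v_{ij}=\prod_{\boldsymbol{\zeta}}v_{ij}^{\boldsymbol{\zeta}}$ together with purely local congruence conditions on the primes dividing each $v_{ij}^{\boldsymbol{\zeta}}$ and $w_{ij}$. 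It is then enough to count tuples $(\textbf{u},\textbf{v}^{\boldsymbol{\zeta}},\textbf{w})$ subject to: the box conditions $|A_iu_i^2\prod_{j\neq i}v_{ij}w_{ij}|\le T$; joint squarefreeness and coprimality of $m_\textbf{A}$ and the $u_i,v_{ij},w_{ij}$; the congruences modulo $8m_\textbf{A}$ defining $\textbf{M}$; and, for every prime $p$ dividing a given $v_{ij}^{\boldsymbol{\zeta}}$ or $w_{ij}$, the associated splitting condition, expressed through $\theta_\textbf{A}$, the residue of $p$ modulo $4$, and the square class of $-a_i/a_j$.

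Next I would fix $\textbf{v}^{\boldsymbol{\zeta}}$ and $\textbf{w}$ and carry out the inner sum over $\textbf{u}$ first. The crucial observation is that, writing $t_{ij}:=v_{ij}w_{ij}$, one has $-a_i/a_j\equiv-A_iA_jt_{ik}t_{jk}\pmod{\Q^{\times 2}}$, which is independent of $\textbf{u}$; hence the $\textbf{w}$-conditions impose no restriction on $\textbf{u}$, and each $u_i$ simply ranges over squarefree integers coprime to $m_\textbf{A}$ and to the other variables, lying in a fixed union of residue classes modulo $8m_\textbf{A}$, and bounded by $\bigl(T/(|A_i|\prod_{j\neq i}v_{ij}w_{ij})\bigr)^{1/2}$. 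Counting squarefree integers in arithmetic progressions with a power-saving error (uniform in the modulus up to a factor $|\theta_\textbf{A}|^{\epsilon}$), multiplying over $i=0,1,2$ and expanding, produces a main term of the form
\[
c(\textbf{A},\textbf{v},\textbf{w})\cdot\frac{1}{\prod_{i<j}v_{ij}w_{ij}}\cdot\frac{T^{3/2}}{|\theta_\textbf{A}|^{1/2}},
\]
where $c(\textbf{A},\textbf{v},\textbf{w})$ is a product of local densities, plus cross terms of strictly smaller order which I would bound crudely, as in \cite[Lem.~4.17]{TimSantens}.

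It then remains to sum this expression over $\textbf{v}^{\boldsymbol{\zeta}}$ and $\textbf{w}$ in the region cut out by the box conditions. The $\textbf{w}$-admissibility still couples each $w_{ij}$ to $t_{ik},t_{jk}$ through the Jacobi symbol $\left(\frac{-A_iA_jt_{ik}t_{jk}}{w_{ij}}\right)$ (note that $p\equiv 3\bmod 4$ forces squares to coincide with fourth powers in $\Z_p^{\times}$, which is precisely what secures local solubility at $p\mid w_{ij}$); I would expand this symbol multiplicatively, turning the sum into a finite combination of Dirichlet series in the variables $v_{ij}^{\boldsymbol{\zeta}}$ and $w_{ij}$, one of them principal and the others twisted by non-trivial quadratic characters. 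The pole order of the principal series is governed by the densities of the admissible primes --- each $v_{ij}$ sees the primes with $\theta_\textbf{A}\in\Q_p^{\times2}$ subject to the local-solubility refinement, and each $w_{ij}$ a thinner set cut out by $p\equiv 3\bmod 4$, $\theta_\textbf{A}\notin\Q_p^{\times2}$ and a quadratic condition. A careful computation of these densities, which behaves differently according to whether $\theta_\textbf{A}\in-\Q^{\times2}$ (where the relation $a_k\equiv a_ia_j\pmod{\Q^{\times2}}$ enlarges the admissible sets, giving $M_\textbf{A}=\tfrac12$) or $\theta_\textbf{A}\notin\pm\Q^{\times2}$ (giving $M_\textbf{A}=\tfrac38$), produces the exponent $3M_\textbf{A}$ on $\log T$. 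Feeding the principal series into \cite[Thm.~4.22]{TimSantens} (with $q_{osc}=O(1)$) extracts the main term $Q_\textbf{A}$ and the stated $(\log T)^{-\frac{6}{5}M_\textbf{A}}$ saving in the error, while the character-twisted series are oscillatory and, by the same theorem, contribute only to the error term.

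I expect the main obstacle to be the simultaneous control of these quadratic-character sums and of the dependence on $\textbf{A}$: one must separate the genuine main term from the oscillatory contributions and pin down the pole order (hence the exact value of $M_\textbf{A}$ in each case), while tracking every implied constant as an explicit function of $\textbf{A}$ --- carrying the dependence on $m_\textbf{A}$ through the progression counts, the Euler products and the Tauberian step, and controlling the local factors at the primes dividing $\theta_\textbf{A}$ --- so as to obtain the uniform bounds $0<Q_\textbf{A}\ll_{\epsilon}|\theta_\textbf{A}|^{\epsilon}$ together with the matching uniformity of the error term.
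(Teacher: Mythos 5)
Your overall strategy --- sum over $\textbf{u}$ first using the independence of the local conditions from $\textbf{u}$, then evaluate the remaining sums over $\textbf{v},\textbf{w}$ via a Jacobi-symbol expansion fed into the oscillatory/Tauberian machinery of \cite[Thm.~4.22]{TimSantens}, tracking the densities that produce the exponent $3M_\textbf{A}$ --- is the same method the paper uses, through equations \eqref{eq: N_A,M^U(T) is a sum first eq}--\eqref{eq: final form of N^loc(T) in terms of S(1,T)} and culminating in Lemma~\ref{lem: assymptotic for S_A(1,T)}.

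The one step that does not match, and would cause trouble as stated, is the opening reduction to $\NAM^{loc'}(T)$. The conditions defining $\NAM^{loc'}(T)$ in the proof of Lemma~\ref{lem: reducing N^A to N^loc} include $\psi_p(u_i/u_j)=\zeta_0$ for $p\mid v^{\boldsymbol\zeta}_{ij}$, so the $\boldsymbol\zeta$-refinement of $v_{ij}$ is itself a function of $\textbf{u}$; you cannot ``fix $\textbf{v}^{\boldsymbol\zeta},\textbf{w}$ and sum over $\textbf{u}$'' since the factorisation moves with $\textbf{u}$. That machinery was built for the Brauer--Manin argument (to compare $\inva{-}$ across different $\textbf{u}$) and is a genuine detour for mere counting. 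What you actually need is exactly your ``crucial observation'': since $-a_i/a_j\equiv a_k\pmod{\Z_p^{\times 2}}$ for $p\mid v_{ij}$, the condition $-a_i/a_j\in\Z_p^{\times 4}$ is subsumed by $a_k\in\Z_p^{\times 2}$, which is $\textbf{u}$-free. This is precisely Lemma~\ref{lem: conditions for local solublity in counting}, which the paper uses directly, encoding the two $\textbf{u}$-free conditions by the indicator functions $\gamma,\omega$ and their decompositions $\alpha,\beta$ (Lemmas~\ref{lem: lambda formula}, \ref{lem: omega formula}), entirely bypassing $\NAM^{loc'}(T)$. With that repair, the remaining outline (characters on $\Gamma_\textbf{A}^{\vee}$ for the $\textbf{M}$-condition and the reduction to $\chi\in\Gamma_\textbf{A}^{\vee}[2]$, M\"obius inversion for the squarefreeness, then the $\textbf{u}$-count and the $\textbf{t}$-sum via partial summation and \cite[Lem.~4.17]{TimSantens}) is correct in spirit, though you leave unaddressed the bookkeeping in Lemma~\ref{lem: chi is a 2-torsion element reduction} and the uniform error propagation in Lemma~\ref{lem: assymptotic for S_A(1,T)} that is needed to get both $Q_\textbf{A}\ll_\epsilon|\theta_\textbf{A}|^\epsilon$ and the $(\log T)^{-\frac{6}{5}M_\textbf{A}}$ saving simultaneously.
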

\begin{definition}
Let $\tau_n(m)$ denote the multiplicative function defined on primes by
    \begin{equation*}
        \tau_n(p^k)= \begin{pmatrix}
            n+k-1 \\
             k 
        \end{pmatrix}.
    \end{equation*}
We will use the divisor bound in the proof of Theorem~\ref{thm: main theorem split into squares and non squares}, namely that for all $\epsilon>0$ we have $\tau_n(m)\ll_{n,\epsilon} m^{\epsilon}$.
\end{definition}

\begin{proof}[Proof of Theorem~\ref{thm: main theorem split into squares and non squares}]
We proceed as in the proof of \cite[Thm.~1.2]{TimSantens}. Firstly by Lemma~\ref{lem: reducing N^Br to sum of N^Br_A,M} and Lemma~\ref{lem: theta_a a square bounded} we have
\begin{equation*}
 \#N^{\Br}_{=\Box}(T)=\mathop{\sum}_{\substack{(\textbf{A},\textbf{M})\in \textbf{$\Phi(T)$}\times\Psi_\textbf{A}\\ \theta_\textbf{A} \in \Q^{\times2}}}  O\left(\frac{T^\frac{3}{2}\log T}{\abs{\theta_\textbf{A}}^{\frac{1}{2}}}\right).   
\end{equation*}
By Lemma~\ref{lem: reducing N^Br to sum of N^Br_A,M},~\ref{lem: reducing N^Br to N^A},~\ref{lem: reducing N^A to N^loc} and \ref{lem: assymptotic formula for local set} we have
\begin{equation*}
  \#N^{\Br}_{=-\Box}(T)=T^\frac{3}{2} (\log T)^\frac{3}{2}\mathop{\sum}_{\substack{(\textbf{A},\textbf{M})\in \textbf{$\Phi(T)$}\times\Psi_\textbf{A}\\ \theta_\textbf{A} \in -\Q^{\times 2}}} \frac{\eta(\textbf{A}, \textbf{M})}{2\abs{\theta_\textbf{A}}^{\frac{1}{2}}}\left(Q_\textbf{A}+O_{\epsilon}\left(\frac{\abs{\theta_\textbf{A}}^{\epsilon}}{(\log T)^\frac{3}{5}}\right)\right),      
\end{equation*}
\begin{equation*}
   \#N^{\Br}_{\neq\pm\Box}(T)=T^\frac{3}{2} (\log T)^\frac{9}{8}\mathop{\sum}_{\substack{(\textbf{A},\textbf{M})\in \textbf{$\Phi(T)$}\times\Psi_\textbf{A}\\ \theta_\textbf{A} \not\in \pm\Q^{\times2}}}  \frac{\eta(\textbf{A}, \textbf{M})}{2\abs{\theta_\textbf{A}}^{\frac{1}{2}}}\left(Q_\textbf{A}+O_{\epsilon}\left(\frac{\abs{\theta_\textbf{A}}^{\epsilon}}{(\log T)^\frac{9}{20}}\right)\right).     
\end{equation*}
Thus to prove this result it suffices to show the sums
\begin{equation*}
\mathop{\sum}_{\substack{(\textbf{A},\textbf{M})\in \textbf{$\Phi(T)$}\times\Psi_\textbf{A}\\ \theta_\textbf{A} \in \Q^{\times2}}}   \abs{\theta_\textbf{A}}^{-\frac{1}{2}},  
\end{equation*}
\begin{equation}
\label{eq: first sum for main theorem}
  \mathop{\sum}_{\substack{(\textbf{A},\textbf{M})\in \textbf{$\Phi(T)$}\times\Psi_\textbf{A}\\ \theta_\textbf{A} \in -\Q^{\times 2}}} \frac{\eta(\textbf{A}, \textbf{M})Q_\textbf{A}}{2\abs{\theta_\textbf{A}}^{\frac{1}{2}}},
\end{equation}
\begin{equation}
\label{eq: second sum for main theorem}
 \mathop{\sum}_{\substack{(\textbf{A},\textbf{M})\in \textbf{$\Phi(T)$}\times\Psi_\textbf{A}\\ \theta_\textbf{A} \not\in \pm\Q^{\times2}}}  \frac{\eta(\textbf{A}, \textbf{M})Q_\textbf{A}}{2\abs{\theta_\textbf{A}}^{\frac{1}{2}}},  
\end{equation}
converge and that the latter two converge to a positive constant.\par
Firstly assume the sums converge, then to prove the sums~\eqref{eq: first sum for main theorem} and \eqref{eq: second sum for main theorem} converge to a positive constant it suffices to prove $\eta(\textbf{A},\textbf{M})\neq 0$ for some $(\textbf{A},\textbf{M})\in$ \textbf{$\Phi(T)$}$\times\Psi_\textbf{A}$. In particular in both cases it suffices to find a surface which is everywhere locally soluble and $\mathcal{A}$ induces a Bauer-Manin obstruction. The examples constructed in \cite[Ex.~6]{On_the_arithmetic_of_del_Pezzo_surfaces_of_degree_2} and \cite[Ex.~4]{On_the_arithmetic_of_del_Pezzo_surfaces_of_degree_2} give such surfaces for $\theta_\textbf{A}\not\in\pm\Q^{\times 2}$ and $\theta_\textbf{A}\in-\Q^{\times 2}$, respectively.\par
Now since $0<Q_{\textbf{A}}\ll_{\epsilon} \abs{\theta_{\textbf{A}}}^{\epsilon}$, by taking an upper bound of each sum, in order to prove each sum converges it suffices to prove the sum
\begin{equation*}
    \mathop{\sum}_{\substack{(\textbf{A},\textbf{M})\in \textbf{$\Phi(T)$}\times\Psi_\textbf{A}\\
    \abs{\theta_\textbf{A}}\geq T}}\abs{\theta_\textbf{A}}^{-\frac{1}{2}+\epsilon},
\end{equation*}
tends to zero as $T\rightarrow \infty$ for sufficiently small $\epsilon>0$. For each $\textbf{A}\in \Phi(T)$ the number of $\textbf{M}\in\Psi_\textbf{A}$ can be bounded by the number of tuples $(n_0,n_1,n_2)$ of cosets of $(\Z/8m_\textbf{A}\Z)^{\times 4}$ in $(\Z/8m_\textbf{A}\Z)^{\times}$. The number of such cosets is $\ll\tau_4(\abs{\theta_\textbf{A}})$, and hence there are $\ll\tau_4(\abs{\theta_\textbf{A}})^3\ll_{\epsilon}\abs{\theta_\textbf{A}}^{\epsilon}$ possible $\textbf{M}\in \Psi_\textbf{A}$. Therefore we have
\begin{equation*}
    \mathop{\sum}_{\substack{(\textbf{A},\textbf{M})\in \textbf{$\Phi(T)$}\times\Psi_\textbf{A}\\
    \abs{\theta_\textbf{A}}\geq T}}\abs{\theta_\textbf{A}}^{-\frac{1}{2}+\epsilon}\ll_{\epsilon}\mathop{\sum}_{\substack{\textbf{A}\in \textbf{$\Phi(T)$}\\
    \abs{\theta_\textbf{A}}\geq T}}\abs{\theta_\textbf{A}}^{-\frac{1}{2}+2\epsilon}.
\end{equation*}
Now for any $\textbf{A}\in \Phi(T)$ we can write $\theta_\textbf{A}=gh_3^3h_4^4h_5^5$, where if $p\mid g$, then $p\in S$ and $\nu_p(g)\leq 2$ (and hence there are finitely many choices for $g$). For fixed $h_3,h_4,h_5\in\Z\setminus\{0\}$, there are $\tau_3(gh_3^3h_4^4h_5^5)\ll_{\epsilon}\abs{gh_3^3h_4^4h_5^5}^{\epsilon}$ possible tuples $\textbf{A}\in \Phi(T)$ such that $\theta_\textbf{A}=gh_3^3h_4^4h_5^5$. In particular we have
\begin{align*}
   \mathop{\sum}_{\substack{\textbf{A}\in \textbf{$\Phi(T)$}\\
    \abs{\theta_\textbf{A}}\geq T}}\abs{\theta_\textbf{A}}^{-\frac{1}{2}+2\epsilon} \ll \mathop{\sum}_{\substack{g,h_3,h_4,h_5\\ \abs{gh_3^3h_4^4h_5^5}>T}} \abs{gh_3^3h_4^4h_5^5}^{-\frac{1}{2}+3\epsilon}&\ll_{\epsilon}\mathop{\sum}_{\substack{g,h_3,h_4,h_5\\ \abs{gh_3^3h_4^4h_5^5}>T}} (h_3)^{-\frac{1}{2}+9\epsilon}(h_4^4h_5^5)^{-\frac{1}{2}+3\epsilon}\\
    &\ll \mathop{\sum}_{\substack{h_4,h_5}} \left(\frac{h_4^{\frac{4}{3}}h_5^{\frac{5}{3}}}{T^\frac{1}{3}}\right)^{\frac{1}{2}-9\epsilon}\left(h_4^4h_5^5\right)^{-\frac{1}{2}+3\epsilon}\\
    &\ll T^{-\frac{1}{6}+3\epsilon}\mathop{\sum}_{\substack{h_4,h_5}}h_4^{-2+\frac{2}{3}+12\epsilon-12\epsilon}h_5^{-\frac{5}{2}+\frac{5}{6}+15\epsilon-15\epsilon}\\
    &\ll T^{-\frac{1}{6}+3\epsilon}.
\end{align*}
Therefore taking $\epsilon<\frac{1}{18}$ we see that this sum tends to zero as $T\rightarrow \infty$, and hence the result holds.
\end{proof}
\subsection{The error terms}
\label{subsec: The error terms}
In this section we will convert $\#\NAM^{loc}(T)$ into a sum and simplify it. To do this, we will construct functions that will give us the conditions on $\NAM^{loc}(T)$. We split this section into subsections which will deal with each condition individually.

\subsubsection{Converting $\#\NAM^{loc}(T)$ into a sum}
\begin{definition}
Let $m\in\NN$. We say a subset $U\subseteq \NN^m$ is \textbf{downward-closed} if $(x_n)_{n=1}^{m}\in U$ implies $\{(y_n)_{n=1}^m\in \NN^m: y_n\leq x_n, \text{ for all } 1\leq n\leq m\}\subseteq U$.     
\end{definition}
Let $U$ be a downward-closed subset of $\N_{\textbf{A},\textbf{M}}(T)$, and let $\NAM^{loc,U}(T):=\NAM^{loc}(T)\cap U$, then we have
\begin{equation}
\label{eq: N_A,M^U(T) is a sum first eq}
    \#\NAM^{loc,U}(T)=\mathop{\sum\sum\sum}_{\substack{(\textbf{u},\textbf{v},\textbf{w}})\in \NAM^{loc,U}(T)}1.
\end{equation}

\subsubsection{Dealing with \textbf{M} condition on $\NAM^{loc}(T)$}
Let 
\begin{equation*}
    \Gamma_\textbf{A}:=(\Z/8m_\textbf{A})^{\times}/(\Z/8m_\textbf{A})^{\times 4},
\end{equation*}
then we can write the condition induced by \textbf{M} on $\NAM^{loc,U}(T)$ as the sum of characters
\begin{equation*}
    \frac{1}{\abs{\Gamma_\textbf{A}}^3}\prod_i\sum_{\chi_i\in \Gamma_\textbf{A}^{\vee}}\overline{\chi_i}(M_i)\chi_i\left(u_i^2\prod_{j\neq i} v_{ij}w_{ij}\right).
\end{equation*}
Substituting this into \eqref{eq: N_A,M^U(T) is a sum first eq} we have
\begin{equation}
\label{eq: N_A,M^loc(T) second simplified form (M condition removed)}
     \#\NAM^{loc,U}(T)=\frac{1}{\abs{\Gamma_\textbf{A}}^3}\sum_{\boldsymbol{\chi}\in (\Gamma_\textbf{A}^{\vee})^3}S_\textbf{A}^U(\boldsymbol{\chi}, T)\prod_i\overline{\chi_i}(M_i),
\end{equation}
where
\begin{equation}
\label{eq: S_A(X,T) first simplified form (M condition removed)}
    S_\textbf{A}^U(\boldsymbol{\chi}, T):=\mathop{\sum\sum\sum}_{\substack{(\textbf{u},\textbf{v},\textbf{w}})\in \mathop{\bigsqcup}_{\substack{N\in\Omega}}N_{\textbf{A,\textbf{N}}}^{loc,U}(T)}\prod_i\chi_i\left(u_i^2\prod_{j\neq i} v_{ij}w_{ij}\right).
\end{equation}
We will let $S_\textbf{A}(\boldsymbol{\chi},T):=S_\textbf{A}^{\NAM^{loc}(T)}(\boldsymbol{\chi}, T)$.
\begin{remark}
\label{rem: extending chi_i to all of Z/8m_AZ}
For $\chi_i\in\Gamma_\textbf{A}^{\vee}$ we can extend $\chi_i$ to the map on $(\Z/8m_\textbf{A})/(\Z/8m_\textbf{A})^{4}$ defined by
\begin{equation*}
    \tilde{\chi_i}(x)=\begin{cases}
        \chi_i(x), &\text{ if } x\in \Gamma_\textbf{A},\\
        0, &\text{ otherwise}.
    \end{cases}
\end{equation*}
By definition, if $u_i\prod_{j\neq i} v_{ij}w_{ij}$ is not coprime to $m_\textbf{A}$ for some $i$, then
\begin{equation*}
  \Tilde{\chi_\ell}\left(u_i^2\prod_{j\neq i} v_{ij}w_{ij}\right)=0,
\end{equation*}
for all $\ell$. In particular, the condition that $u_i\prod_{j\neq i} v_{ij}w_{ij}$ is coprime to $m_\textbf{A}$ for all $i$ is contained in this condition induced by $\textbf{M}$. By abuse of notation we will denote $\Tilde{\chi_i}$ by $\chi_i$. 
\end{remark}
\subsubsection{Dealing with local solubility condition on $\NAM^{loc}(T)$}
We now deal with the existence of $\Q_p$-points when $p\nmid m_\textbf{A}$. We first determine criteria for when a $\Q_p$-point exists:
\begin{lemma}
\label{lem: conditions for local solublity in counting}
The condition $S_\textbf{a}(\Q_p)\neq \varnothing$ for $p\nmid m_\textbf{A}$ in $\NAM^{loc}(T)$ is equivalent to the conditions
\begin{enumerate}
    \item \label{con: 1} $p\mid w_{ij} \implies \left[p\equiv 3\mod 4, \frac{-a_i}{a_j}\in\Q_p^{\times 2} \text{ and } \theta_\textbf{A}\not\in\Q_p^{\times 2} \right]$;
    \item \label{con: 2} $p\mid v_{ij} \implies \left[ a_k\in \Z_p^{\times 2}\right]$.
\end{enumerate}
\end{lemma}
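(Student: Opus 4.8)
The plan is to verify the equivalence one prime at a time. Fix a tuple $(\textbf{u},\textbf{v},\textbf{w})$ satisfying all the defining conditions of $\NAM^{loc}(T)$ except possibly local solubility away from $m_\textbf{A}$; I would show that for each fixed prime $p\nmid m_\textbf{A}$ one has $S_\textbf{a}(\Q_p)\neq\varnothing$ if and only if criteria \eqref{con: 1} and \eqref{con: 2} are met at $p$, and then conjoin over all such $p$ (noting that $v_{ij}$ and $w_{ij}$ are automatically coprime to $m_\textbf{A}$, so \eqref{con: 1} and \eqref{con: 2} never impose anything at primes dividing $m_\textbf{A}$). For such a $p$: it lies outside $S$, hence $p>33$ and Lemma~\ref{lem: local solubility of S} applies, and $p\nmid A_0A_1A_2$; by the squarefreeness condition in the definition of $N_{\textbf{A},\textbf{M}}(T)$ the integers $m_\textbf{A},u_0,u_1,u_2,v_{01},v_{02},v_{12},w_{01},w_{02},w_{12}$ are squarefree and pairwise coprime, so $p$ divides at most one of the last nine; since $\theta_\textbf{a}=\theta_\textbf{A}\,(u_0u_1u_2v_{01}v_{02}v_{12}w_{01}w_{02}w_{12})^2$ we have $\theta_\textbf{a}\equiv\theta_\textbf{A}\pmod{\Q_p^{\times2}}$; and from $\theta_\textbf{a}=-a_0a_1a_2$ we get $-a_i/a_j\equiv -a_ia_j\equiv\theta_\textbf{a}\,a_k\pmod{\Q_p^{\times2}}$ for every labelling $\{i,j,k\}=\{0,1,2\}$.

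Next I would run the case split according to which of the nine integers $p$ divides. If $p$ divides none of them, or divides some $u_i$, then at least two of $\nu_p(a_0),\nu_p(a_1),\nu_p(a_2)$ vanish, so the third criterion of Lemma~\ref{lem: local solubility of S} holds with $s=0$ and $S_\textbf{a}(\Q_p)\neq\varnothing$; on the other side the hypotheses of \eqref{con: 1} and \eqref{con: 2} are empty at $p$, so the equivalence holds at $p$. If $p\mid v_{ij}$ for one pair $\{i,j\}$, then $\nu_p(a_i)=\nu_p(a_j)=1$ and $\nu_p(a_k)=0$, while $\theta_\textbf{A}\in\Q_p^{\times2}$ (from the definition of $N_{\textbf{A},\textbf{M}}(T)$) forces $\theta_\textbf{a}\in\Q_p^{\times2}$ and hence $-a_i/a_j\equiv a_k\pmod{\Q_p^{\times2}}$. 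A valuation count rules out the third and fourth criteria of Lemma~\ref{lem: local solubility of S}; the first criterion can hold only through $a_k$ and is then equivalent to $a_k\in\Z_p^{\times2}$; the second can hold only through $-a_i/a_j$, and it forces $-a_i/a_j\in\Q_p^{\times2}$, i.e.\ $a_k\in\Z_p^{\times2}$. Thus $S_\textbf{a}(\Q_p)\neq\varnothing\iff a_k\in\Z_p^{\times2}$, which is precisely what \eqref{con: 2} requires at $p$ (and \eqref{con: 1} is empty at $p$).

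The remaining case is $p\mid w_{ij}$ for one pair, where again $\nu_p(a_i)=\nu_p(a_j)=1$, $\nu_p(a_k)=0$, and the definitions of $N_{\textbf{A},\textbf{M}}(T)$ and $\NAM^{loc}(T)$ give $\theta_\textbf{A}\notin\Q_p^{\times2}$, $p\equiv3\bmod4$, and $-a_i/a_j\in\Z_p^{\times2}$. The key point is that for $p\equiv3\bmod4$ one has $(\Z_p^{\times})^4=(\Z_p^{\times})^2$, since $\gcd(4,p-1)=2$; hence $-a_i/a_j\in(\Z_p^{\times})^2=(\Z_p^{\times})^4$, the second criterion of Lemma~\ref{lem: local solubility of S} holds, and $S_\textbf{a}(\Q_p)\neq\varnothing$ unconditionally in this case. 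Meanwhile \eqref{con: 1} at $p$ reads exactly ``$p\equiv3\bmod4$, $-a_i/a_j\in\Q_p^{\times2}$, $\theta_\textbf{A}\notin\Q_p^{\times2}$'', all of which we have just recorded, and \eqref{con: 2} is empty at $p$. Conjoining the pointwise equivalences over all $p\nmid m_\textbf{A}$ then yields the lemma. The only delicate part is the $p$-adic valuation bookkeeping in the two cases where $p$ divides some $v_{ij}$ or $w_{ij}$, together with the identity $(\Z_p^{\times})^4=(\Z_p^{\times})^2$ for $p\equiv3\bmod4$; there is no genuine obstacle beyond keeping the case analysis straight.
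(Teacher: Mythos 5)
Your proposal is correct and follows essentially the same structure as the paper's proof: both fix a prime $p\nmid m_\textbf{A}$, use the squarefreeness condition in $N_{\textbf{A},\textbf{M}}(T)$ to reduce to the cases where $p$ divides none of the nine auxiliary integers, some $u_i$, some $v_{ij}$, or some $w_{ij}$, and then invoke Lemma~\ref{lem: local solubility of S} in each case. The only cosmetic difference is in the $w_{ij}$ case: you observe directly that $(\Z_p^\times)^4=(\Z_p^\times)^2$ when $p\equiv3\bmod4$, whereas the paper phrases the same fact as ``one of $\pm\sqrt{-a_i/a_j}$ lies in $\Z_p^{\times2}$''; these are equivalent.
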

\begin{proof}
We split into cases depending on which variable a prime $p\nmid m_\textbf{A}$ divides:\\
If $p\nmid m_\textbf{A}\prod_iu_i\prod_{i\neq j}v_{ij}w_{ij}$, then by Lemma~\ref{lem: local solubility of S} we have that $S_\textbf{a}(\Q_p)\neq \varnothing$.\\
If $p\mid u_k$, then by Lemma~\ref{lem: local solubility of S} we have that $S_\textbf{a}(\Q_p)\neq \varnothing$.\\
If $p\mid w_{ij}$, $p\equiv 3\mod 4$ and $\frac{-a_i}{a_j}\in \Z_p^{\times 2}$, then either $\pm \sqrt{\frac{-a_i}{a_j}} \in \Z_p^{\times 2}$, and hence by Lemma~\ref{lem: local solubility of S} we have that $S_\textbf{a}(\Q_p)\neq\varnothing$. In particular the condition $S_\textbf{a}(\Q_p)\neq\varnothing$ when $p\mid w_{ij}$ is contained in the condition $p\mid w_{ij} \implies \left[p\equiv 3\mod 4, \frac{-a_i}{a_j}\in\Q_p^{\times 2} \text{ and } \theta_\textbf{A}\not\in\Q_p^{\times 2} \right]$.\\
If $p\mid v_{ij}$, then since $\theta_\textbf{A}\in\Q_p^{\times 2}$ we have $\frac{-a_i}{a_j}=a_k \in \Z_p^{\times}/\Z_p^{\times 2}$. Thus by Lemma~\ref{lem: local solubility of S} for a $\Q_p$-point to exist, we require the additional condition that $p\mid v_{ij} \implies \left[ a_k\in \Z_p^{\times 2}\right]$.
\end{proof}
We now construct functions to encapsulate conditions \eqref{con: 1} and \eqref{con: 2} in Lemma~\ref{lem: conditions for local solublity in counting}:
\begin{definition}
Let $\gamma(w;x,y)$ be the indicator function for the property
\begin{equation*}
    w \text{ square-free, } p\mid w \implies p\equiv3 \mod 4 \text{ and } x,y\not\in\Q_p^{\times 2},
\end{equation*}
and let $\omega(v;x,y)$ be the indicator function for the property
\begin{equation*}
    v \text{ square-free, } p\mid v \implies x,y\in\Q_p^{\times 2}.
\end{equation*}    
\end{definition}
By Lemma~\ref{lem: conditions for local solublity in counting} we know that $\omega(v_{ij};\theta_\textbf{A},a_k)$ and $\gamma(w_{ij};\theta_\textbf{A},a_k)$ encode the conditions for local solubility arising from $\NAM^{loc}(T)$. Before substituting these functions into \eqref{eq: S_A(X,T) first simplified form (M condition removed)} we first prove they can both be written in terms of simpler $S$-frobenian multiplicative functions (\cite[Def.~2.7]{LoughranDaniel2023Fmfa}), the definition of which we recall below:
\begin{definition}
\label{def: frob mult functions}
Let $S$ be a finite set of primes of $\QQ$. We say a multiplicative function $f: \NN \rightarrow\CC$ is a \textbf{S-frobenian multiplicative function} if it satisfies the following:
\begin{itemize}
    \item There exists a constant $N\in\NN$ such that $\abs{f(p^k)}\leq N^k$, for all primes $p$ and $k\in\NN$;
    \item For all $\epsilon>0$ there exists a constant $C_\epsilon\in\NN$ such that $\abs{f(n)}\leq C_\epsilon n^\epsilon$, for all $n\in \NN$;
    \item There exists a Galois extension $L/\Q$ and a class function $\phi:\Gal(L/\QQ)\rightarrow\CC$ such that every prime that ramifies is contained in $S$ and for every prime $p\notin S$ we have
    \begin{equation*}
        f(p)=\phi(\operatorname{Frob}_p).
    \end{equation*}
\end{itemize}
Given such a S-frobenian multiplicative function $f$, we define its \textbf{mean} as
\begin{equation*}
    \frac{1}{\abs{\Gal(L/\QQ)}}\sum_{\gamma\in\Gal(L/\QQ)}\phi(\gamma).
\end{equation*}
\end{definition}
\begin{definition}
Let $\alpha(w;x)$ and $\beta(v;x)$ be the multiplicative functions that take the following values on primes:
 \begin{equation*}
    \alpha(p;x)=\begin{cases}
        \frac{1}{2}, \text{ if } p\equiv 3\mod 4 \text{ and } x\not\in\Q_p^{\times 2},\\
        0, \text{ otherwise,}
    \end{cases}
\end{equation*}
and
\begin{equation*}
    \beta(p;x)=\begin{cases}
        \frac{1}{2}, \text{ if } x\in\Q_p^{\times 2},\\
        0, \text{ otherwise.}
    \end{cases}
\end{equation*}   
\end{definition}
\begin{lemma}
Let $x\in \Z\setminus\{0\}$ such that $x$ is not a square, then $\alpha(-;x)$ is a $\{p:p\mid 2x\}-$frobenian multiplicative function of mean $\frac{1}{8}$ when $x\not\in\pm\Q^{\times 2}$ and mean $\frac{1}{4}$ when $-x\in\Q^{\times 2}$.    
\end{lemma}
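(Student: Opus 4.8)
The plan is to follow the three bullet points of Definition~\ref{def: frob mult functions} directly, producing the Galois extension and class function by hand. Take $L:=\Q(i,\sqrt{x})$; this is Galois over $\Q$, and since $\Q(i)/\Q$ ramifies only at $2$ while $\Q(\sqrt{x})/\Q$ ramifies only at primes dividing the squarefree part of $x$ (hence dividing $x$), the only primes ramifying in $L/\Q$ lie in $S=\{p:p\mid 2x\}$. For any prime $p\notin S$ we have $p$ odd, $p\nmid x$, and $p$ unramified in $L$, so $\Frob_p$ is defined and its images in $\Gal(\Q(i)/\Q)$ and $\Gal(\Q(\sqrt{x})/\Q)$ are the respective Frobenius classes; the first is nontrivial exactly when $p\equiv 3\bmod 4$, and the second is nontrivial exactly when $p$ is inert in $\Q(\sqrt{x})$, which (as $x\in\Z_p^{\times}$) is equivalent by Hensel's lemma to $x\notin\Q_p^{\times 2}$. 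Define $\phi\colon\Gal(L/\Q)\to\CC$ by $\phi(\gamma)=\tfrac12$ when $\gamma$ restricts nontrivially to both $\Q(i)$ and $\Q(\sqrt{x})$ and $\phi(\gamma)=0$ otherwise; since $\Gal(L/\Q)$ is abelian this is a class function, and by construction $\alpha(p;x)=\phi(\Frob_p)$ for all $p\notin S$.

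The remaining two conditions of Definition~\ref{def: frob mult functions} are immediate: $0\le\alpha(p^k;x)\le 1$ for all $p$ and $k$, so one may take $N=1$ in the first and $C_\epsilon=1$ in the second. It then remains only to compute the mean $\tfrac{1}{|\Gal(L/\Q)|}\sum_{\gamma\in\Gal(L/\Q)}\phi(\gamma)$, which splits into the two cases of the statement according to the structure of $\Gal(L/\Q)$.

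If $x\notin\pm\Q^{\times 2}$, then neither $\sqrt{x}$ nor $\sqrt{-x}$ lies in $\Q$, so $i\notin\Q(\sqrt{x})$; hence $\Q(i)$ and $\Q(\sqrt{x})$ are linearly disjoint over $\Q$, $\Gal(L/\Q)\cong\Gal(\Q(i)/\Q)\times\Gal(\Q(\sqrt{x})/\Q)\cong(\Z/2\Z)^2$, and exactly one of its four elements has both components nontrivial, giving mean $\tfrac14\cdot\tfrac12=\tfrac18$. If instead $-x\in\Q^{\times 2}$, write $x=-m^2$, so $\sqrt{x}=im\in\Q(i)$ and $\Q(\sqrt{x})=\Q(i)$, whence $L=\Q(i)$ and $\Gal(L/\Q)\cong\Z/2\Z$; here $\phi$ takes the value $\tfrac12$ on the unique nontrivial element and $0$ elsewhere, giving mean $\tfrac12\cdot\tfrac12=\tfrac14$. (The excluded case $x\in\Q^{\times 2}$ would make $\alpha(-;x)$ identically zero.) There is no substantive obstacle here: the only points demanding any care are the identification for $p\notin S$ of $p$-adic squareness of $x$ with the Frobenius condition in $\Gal(\Q(\sqrt{x})/\Q)$, which relies on $p\nmid x$ forcing $x\in\Z_p^{\times}$, and the routine verification that $L/\Q$ ramifies only at primes of $S$.
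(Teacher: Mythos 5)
Your proof is correct and follows essentially the same route as the paper's: both take $L=\Q(i,\sqrt{x})$, define the class function supported on the unique element acting nontrivially on both $\Q(i)$ and $\Q(\sqrt{x})$, and read off the mean as $\tfrac{1}{2|\Gal(L/\Q)|}$, splitting into the cases $\Gal(L/\Q)\cong(\Z/2\Z)^2$ and $\Gal(L/\Q)\cong\Z/2\Z$. Your version simply spells out a few steps (ramification, linear disjointness, the $N=1$, $C_\epsilon=1$ bounds) that the paper leaves implicit.
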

\begin{proof}
Clearly $\alpha(-;x)$ is multiplicative and satisfies the first two properties given in Definition~\ref{def: frob mult functions}. Let $L_1:=\Q(\sqrt{-1})$, $L_2:=\Q(\sqrt{x})$, and let $L:=L_1L_2$. Furthermore, let $<\sigma>:=\Gal(L_1/\Q)$ and let $<\tau>:=\Gal(L_2/\Q)$. Clearly all primes that ramify in $L$ are contained in $S:=\{p:p\mid 2x\}$. For $p\notin S$ prime, we have that $p\equiv3\mod 4$ and $x\notin \Q_p^{\times 2}$ if and only if $p$ is inert in $L_1$ and $L_2$. Furthermore for any such $p\notin S$, we have that $\operatorname{Frob}_p=(\sigma,\tau)$, when $x\not\in\pm\Q^{\times 2}$, and $\operatorname{Frob}_p=\sigma$, when $-x\in\Q^{\times 2}$. Therefore in the case $x\not\in\pm\Q^{\times 2}$, define the class function $\phi:\Gal(L/\Q)\rightarrow\CC$ by $\phi((\sigma,\tau))=\frac{1}{2}$, and $\phi(\delta)=0$, for all $\delta\neq (\sigma,\tau)\in\Gal(L/\Q)$. In the case $-x\in\Q^{\times 2}$, define the class function $\phi:\Gal(L/\Q)\rightarrow\CC$ by $\phi(\sigma)=\frac{1}{2}$, and $\phi(1)=0$. By definition we have that the mean of $\alpha(-;x)$ is $\frac{1}{2\abs{\Gal(L/\Q)}}$, and hence the result follows.  
\end{proof}

\begin{lemma}
 Let $x\in \Z\setminus\{0\}$ such that $x$ is not a square, then $\beta(-;x)$ is a $\{p:p\mid 2x\}-$frobenian multiplicative function of mean $\frac{1}{4}$ when $x\not\in\pm\Q^{\times 2}$ or when $x\in-\Q^{\times 2}$.       
\end{lemma}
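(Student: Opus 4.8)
The plan is to follow the proof of the preceding lemma for $\alpha(-;x)$ almost verbatim, with the simplification that the relevant Galois extension is now just the quadratic field $\QQ(\sqrt{x})$, so no splitting into cases is needed. First I would dispatch the two elementary conditions in Definition~\ref{def: frob mult functions}: $\beta(-;x)$ is multiplicative by construction, and since $\beta(p^k;x)\in\{0,\tfrac12\}$ we have $\abs{\beta(p^k;x)}\le 1$ and $\abs{\beta(n;x)}\le 1\le C_\epsilon n^\epsilon$ for every $\epsilon>0$, so both size bounds hold (with $N=1$).

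Next I would supply the Galois data. Put $L:=\QQ(\sqrt{x})$; since $x$ is not a square in $\QQ^{\times}$ this is a genuine quadratic extension, and every prime ramifying in $L$ divides $2x$, hence lies in $S:=\{p:p\mid 2x\}$. Write $\langle\tau\rangle:=\Gal(L/\QQ)$. For $p\notin S$ we have $p\nmid x$, so $x\in\ZZ_p^{\times}$, and since such $p$ is odd, $x\in\QQ_p^{\times 2}$ if and only if $x$ is a square modulo $p$, equivalently $p$ splits completely in $L$, i.e.\ $\operatorname{Frob}_p=1\in\Gal(L/\QQ)$. Taking the class function $\phi:\Gal(L/\QQ)\to\CC$ with $\phi(1)=\tfrac12$ and $\phi(\tau)=0$, we get $\beta(p;x)=\phi(\operatorname{Frob}_p)$ for all $p\notin S$, which is the last requirement of Definition~\ref{def: frob mult functions}. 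The mean is then $\tfrac{1}{\abs{\Gal(L/\QQ)}}\bigl(\phi(1)+\phi(\tau)\bigr)=\tfrac12\cdot\tfrac12=\tfrac14$.

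Finally I would observe that the hypothesis displayed in the statement — that $x\notin\pm\QQ^{\times 2}$ or $x\in-\QQ^{\times 2}$ — is, for $x\in\ZZ\setminus\{0\}$, just a restatement of the assumption that $x$ is not a square, so in contrast to the $\alpha$-case the same field $L$ and the same class function $\phi$ handle both regimes and the mean is uniformly $\tfrac14$. I do not expect any real obstacle here: the only points that merit a word of care are that $2\in S$ always (so the ``unit is a square iff square mod $p$'' criterion is invoked only at odd primes), and the standard identification of $\QQ_p^{\times 2}$-membership of a $p$-adic unit with the splitting of $p$ in the associated quadratic field.
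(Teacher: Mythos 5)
Your proof matches the paper's almost line for line: same field $L=\QQ(\sqrt{x})$, same ramification set $S$, same identification of $x\in\QQ_p^{\times 2}$ with $p$ splitting (Frobenius trivial), same class function $\phi(1)=\tfrac12$, $\phi(\tau)=0$, and the same computation giving mean $\tfrac{1}{2\abs{\Gal(L/\QQ)}}=\tfrac14$. The additional observation that the stated hypotheses are just "$x$ not a square" is correct and harmless.
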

\begin{proof}
Clearly $\beta(-;x)$ is multiplicative and satisfies the first two properties given in Definition~\ref{def: frob mult functions}. Let $L:=\Q(\sqrt{x})$. Clearly all primes that ramify in $L$ are contained in $S:=\{p:p\mid 2x\}$. For $p\notin S$ prime, we have that $x\in \Q_p^{\times 2}$ if and only if $p$ splits in $L$. Furthermore, for any split $p\notin S$ we have that $\operatorname{Frob}_p=1\in\Gal(L/\Q)$. Thus define the class function $\phi:\Gal(L/\Q)\rightarrow\CC$ by $\phi(1)=\frac{1}{2}$, and $\phi(\sigma)=0$, for all $\sigma\neq 1\in\Gal(L/\Q)$. By definition we have that the mean of $\beta(-;x)$ is $\frac{1}{2\abs{\Gal(L/\Q)}}$, and hence the result follows.  
\end{proof}

\begin{lemma}
\label{lem: lambda formula}
For $x,y,w\in\ZZ\setminus\{0\}$, we have
\begin{equation*}
\gamma(w;x,y)=\mu(w)^2\left[\sum_{hf=w}\alpha(h;y)\alpha(f;y)\left(\frac{-x}{f}\right)\right].
\end{equation*}    
\end{lemma}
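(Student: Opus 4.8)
The plan is to expand $\gamma(w;x,y)$ over the Dirichlet convolution indicated on the right-hand side and verify the identity prime-by-prime, since both sides are supported on square-free $w$. Observe first that $\gamma(w;x,y)$ is not itself multiplicative in $w$ in the naive sense (the condition ``$p \mid w \implies x,y \notin \Q_p^{\times 2}$'' is multiplicative, but one must be careful), but the function $w \mapsto \mu(w)^2 \sum_{hf=w}\alpha(h;y)\alpha(f;y)\left(\frac{-x}{f}\right)$ is a product over the prime divisors of $w$ of local factors, because $\alpha(-;y)$ is multiplicative, $\mu(-)^2$ is multiplicative, and the Kronecker symbol $\left(\frac{-x}{-}\right)$ is completely multiplicative. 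Hence it suffices to check the identity when $w = p$ is a single prime, and to check that both sides vanish when $w$ is not square-free (the latter is immediate from the $\mu(w)^2$ factor on the right and from the definition of $\gamma$, which requires $w$ square-free).

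So reduce to $w = p$ prime. The right-hand side becomes $\mu(p)^2\left[\alpha(p;y)\alpha(1;y)\left(\frac{-x}{1}\right) + \alpha(1;y)\alpha(p;y)\left(\frac{-x}{p}\right)\right] = \alpha(p;y)\left(1 + \left(\frac{-x}{p}\right)\right)$, using $\alpha(1;y)=1$. Now I would split into cases according to the congruence class of $p \bmod 4$ and whether $y$ is a square in $\Q_p^{\times}$. If $p \not\equiv 3 \bmod 4$ or $y \in \Q_p^{\times 2}$, then $\alpha(p;y) = 0$, so the right-hand side is $0$; and $\gamma(p;x,y) = 0$ as well since the condition ``$p \equiv 3 \bmod 4$ and $y \notin \Q_p^{\times 2}$'' fails. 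If $p \equiv 3 \bmod 4$ and $y \notin \Q_p^{\times 2}$, then $\alpha(p;y) = \tfrac12$, and the right-hand side equals $\tfrac12\left(1 + \left(\frac{-x}{p}\right)\right)$, which is $1$ when $\left(\frac{-x}{p}\right) = 1$ and $0$ when $\left(\frac{-x}{p}\right) = -1$. On the other hand $\gamma(p;x,y) = 1$ precisely when additionally $x \notin \Q_p^{\times 2}$. Here one uses the standard fact that for $p$ odd not dividing $x$, the statement $x \notin \Q_p^{\times 2}$ is equivalent to $\left(\frac{x}{p}\right) = -1$; and since $p \equiv 3 \bmod 4$ we have $\left(\frac{-1}{p}\right) = -1$, so $\left(\frac{-x}{p}\right) = -\left(\frac{x}{p}\right)$. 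Thus $\left(\frac{-x}{p}\right) = 1 \iff \left(\frac{x}{p}\right) = -1 \iff x \notin \Q_p^{\times 2}$, matching the two sides exactly. For primes $p \mid 2x$ one checks directly that both sides vanish: such $p$ are excluded from the support of $\gamma$ (it needs $x \notin \Q_p^{\times 2}$, and anyway for $p=2$ the congruence $p\equiv 3\bmod 4$ fails), and on the right $\left(\frac{-x}{p}\right)$ behaves accordingly — in fact when $p=2$ the factor $\alpha(2;y)=0$ so there is nothing to check, and when $p\mid x$ with $p$ odd one has $x\in\Q_p^{\times 2}$ or not according to the odd part, handled by the same Kronecker-symbol computation with the convention $\left(\frac{-x}{p}\right)=0$ when $p\mid x$, which forces the right-hand side to $\tfrac12\cdot(1+0)$ — so I will need to double check this boundary case carefully.

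The only genuine subtlety — and the step I expect to require the most care — is precisely this interaction at primes dividing $2x$, where the Kronecker symbol $\left(\frac{-x}{p}\right)$ can vanish or pick up a nonzero value from the $2$-part or from units, and where one must confirm that the right-hand side still reproduces the (identically zero) value of $\gamma$ there. One clean way to sidestep it is to note that both sides are $S'$-frobenian multiplicative functions of $w$ for $S' = \{p : p \mid 2x\}$ (using the lemmas already established for $\alpha$), so it suffices to verify the identity on primes $p \notin S'$, where the clean Legendre-symbol dictionary above applies without caveats; the values at $p \in S'$ are then pinned down separately and directly from the definitions, both being zero. I would present the argument in that order: reduce to square-free $w$, reduce to prime $w$, dispatch $p \notin \{2\} \cup \{p : p \mid x\}$ via the Legendre symbol computation using $p \equiv 3 \bmod 4 \Rightarrow \left(\frac{-1}{p}\right) = -1$, and finish with the short direct check at the remaining finitely many primes.
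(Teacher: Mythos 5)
Your core computation is the same as the paper's: both reduce to square-free $w$, then to a single prime $p$, and then verify the identity via the Legendre-symbol manipulation using $p\equiv 3\bmod 4 \Rightarrow \left(\frac{-1}{p}\right)=-1$. That part is fine and is exactly what appears in the paper.

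However, the way you dispatch the boundary primes $p\mid 2x$ is not correct, and it is the one place where you flagged uncertainty. You claim that at $p\in S'=\{p:p\mid 2x\}$ both sides are zero, which is false. Take $p$ an odd prime with $p\mid x$, $p\equiv 3\bmod 4$, and $y\notin\Q_p^{\times 2}$. Then $\alpha(p;y)=\tfrac12$, and since $p\mid{-x}$ the Kronecker symbol gives $\left(\frac{-x}{p}\right)=0$, so the right-hand side at $w=p$ equals $\tfrac12(1+0)=\tfrac12$. But the left-hand side $\gamma(p;x,y)$ is an indicator, hence in $\{0,1\}$; in particular if $\nu_p(x)$ is odd it equals $1$. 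So the two sides disagree and are not both zero. Relatedly, two $S'$-frobenian multiplicative functions agreeing at all $p\notin S'$ need not agree at $p\in S'$, so the frobenian observation cannot close the gap by itself. (You would also need $S'=\{p:p\mid 2xy\}$, not just $\{p:p\mid 2x\}$, to account for the dependence of $\alpha$ on $y$.)

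To be fair, the paper's own proof has the same blind spot: the step ``$\tfrac12\bigl[1-\left(\frac{x}{p}\right)\bigr]$ is $1$ if and only if $x\notin\Q_p^{\times 2}$ (and $0$ otherwise)'' silently assumes $\left(\frac{x}{p}\right)\in\{\pm1\}$, i.e.\ $p\nmid x$. The lemma as literally stated for all $x,y,w\in\Z\setminus\{0\}$ is therefore not quite true; it requires (implicitly) that every prime dividing $w$ is coprime to $2x$. In the only place it is applied (inside the proof of Lemma~5.20 of the paper, on the variables $h_{ij},f_{ij}$), the $\mu(m_\textbf{A}\prod u_i\prod v_{ij}w_{ij})^2=1$ condition forces $w_{ij}$ to be coprime to $m_\textbf{A}\supseteq\{p:p\mid 2\theta_\textbf{A}\}$, so the required coprimality does hold there. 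Your instinct that this case needed care was right; the resolution is that the identity fails at such primes but the lemma is never used with such $w$.
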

\begin{proof}
Firstly both sides are multiplicative in $w$ and are both $0$ if $w$ is not square-free. Thus it suffices to prove this equality holds when $w$ is a prime, say $p$.\par
For a given prime $p$, if $p\not \equiv 3\mod 4$, then both sides are $0$, so we may assume $p\equiv 3\mod 4$. Furthermore, we can assume $y\not\in\Q_p^{\times 2}$ as otherwise both sides are $0$. In this case the right side is given by
\begin{equation*}
    \alpha(p;y)+\alpha(p;y)\left(\frac{-x}{p}\right)=\frac{1}{2}+\frac{1}{2}\left(\frac{-x}{p}\right)=\frac{1}{2}\left[1-\left(\frac{x}{p}\right)\right].
\end{equation*}
Thus this equation is $1$ if and only if $x\not\in\Q_p^{\times 2}$ (and $0$ otherwise), and hence equality holds.
\end{proof}

\begin{lemma}
\label{lem: omega formula}
For $x,y,v\in\ZZ\setminus\{0\}$, we have
\begin{equation*}
\omega(v;x,y)=\mu(v)^2\left[\sum_{de=v}\beta(d;y)\beta(e:y)\left(\frac{x}{e}\right)\right].
\end{equation*}    
\end{lemma}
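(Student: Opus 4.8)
The plan is to mirror the proof of Lemma~\ref{lem: lambda formula} essentially verbatim, the only structural change being that $\alpha$ and $-x$ are replaced by $\beta$ and $x$. First I would observe that both sides of the asserted identity are multiplicative functions of $v$: on the right, $d\mapsto\beta(d;y)$ and $e\mapsto\beta(e;y)\left(\frac{x}{e}\right)$ are multiplicative (the latter being a product of multiplicative functions, the Kronecker symbol $e\mapsto\left(\frac{x}{e}\right)$ being multiplicative), hence so is their Dirichlet convolution $\sum_{de=v}$, and multiplying by $\mu(v)^2$ preserves multiplicativity; on the left, $\omega(v;x,y)$ is plainly multiplicative in $v$. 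Since both sides also vanish as soon as $v$ is not square-free (by definition of $\omega$ on the left, and because of the factor $\mu(v)^2$ on the right), it suffices to verify the identity when $v=p$ is a single prime.

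For $v=p$ the only decompositions $de=p$ are $(d,e)=(1,p)$ and $(p,1)$, so, using $\beta(1;y)=1$ and $\left(\frac{x}{1}\right)=1$, the right-hand side equals
\begin{equation*}
\beta(p;y)\left(\frac{x}{p}\right)+\beta(p;y)=\beta(p;y)\left[1+\left(\frac{x}{p}\right)\right].
\end{equation*}
If $y\notin\Q_p^{\times2}$ then $\beta(p;y)=0$ and this is $0$; the left-hand side $\omega(p;x,y)$ is likewise $0$, since $p\mid v$ forces $y\in\Q_p^{\times2}$. If $y\in\Q_p^{\times2}$ then $\beta(p;y)=\tfrac12$, so the right-hand side is $\tfrac12\left[1+\left(\frac{x}{p}\right)\right]$, which is $1$ when $\left(\frac{x}{p}\right)=1$ and $0$ when $\left(\frac{x}{p}\right)=-1$; since $\left(\frac{x}{p}\right)=1$ is equivalent to $x\in\Q_p^{\times2}$ for $p\nmid x$ — and this coprimality is harmless here exactly as in Lemma~\ref{lem: lambda formula}, because in the application $x=\theta_\textbf{A}$ is prime to every prime dividing $v$ by the square-free coprimality condition defining $\NAM(T)$ — this agrees with $\omega(p;x,y)$.

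I do not anticipate a genuine obstacle: the argument is a direct transcription of the proof of Lemma~\ref{lem: lambda formula}. The only points deserving a word of care are the identification of the right-hand side as a multiplicative function in $v$ (which legitimises the reduction to primes) and the evaluations $\beta(1;y)=1$, $\left(\frac{x}{1}\right)=1$; the "$1+\left(\frac{x}{p}\right)$" here in place of the "$1-\left(\frac{x}{p}\right)$" of Lemma~\ref{lem: lambda formula} is simply because $\omega$ is built directly from $\left(\frac{x}{e}\right)$, whereas the analogous Legendre symbol in Lemma~\ref{lem: lambda formula} carried an extra sign through the use of $-x$ together with $p\equiv3\bmod 4$.
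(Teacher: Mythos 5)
Your proof is correct and follows essentially the same route as the paper's: reduce to $v=p$ prime by multiplicativity and the square-free constraint, then evaluate $\beta(p;y)\left[1+\left(\frac{x}{p}\right)\right]$ in the two cases $y\in\Q_p^{\times2}$ or not. Your parenthetical remark about the coprimality of $x$ and $v$ is a genuine subtlety that the paper leaves implicit, so flagging it (and noting that it is harmless in the only place the lemma is applied) is a fair improvement rather than a deviation.
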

\begin{proof}
Firstly both sides are multiplicative in $v$ and are both $0$ if $v$ is not square-free. Thus it suffices to prove this equality holds when $v$ is a prime, say $p$. Furthermore, we can assume $y\in\Q_p^{\times 2}$ as otherwise both sides are $0$. In this case the right side is given by
\begin{equation*}
    \beta(p;y)+\beta(p;y)\left(\frac{x}{p}\right)=\frac{1}{2}+\frac{1}{2}\left(\frac{x}{p}\right).
\end{equation*}
Thus this equation is $1$ if and only if $x\in\Q_p^{\times 2}$ (and $0$ otherwise), and hence equality holds.
\end{proof}
Before substituting the functions in Lemma~\ref{lem: lambda formula} and \ref{lem: omega formula} into \eqref{eq: S_A(X,T) first simplified form (M condition removed)} we provide the following definition:
\begin{definition}
Let $q_{\text{frob}}\in \ZZ$. Let $(u_n)_{n\in I}$ be some tuple of variables indexed by some finite set $I$. Let $u_{\widehat{n}}:=(u_\ell)_{\ell\in I, \ell\neq n}$ and let $u_{\widehat{nm}}:=(u_\ell)_{\ell\in I, \ell\neq n,m}$ . Let $f:\NN^I\rightarrow \CC$ be some function. Then
\begin{enumerate}
    \item For $n\neq m\in I$, we say $u_n$ and $u_m$ are \textbf{linked} if there exists two functions $a(-;u_{\widehat{nm}}),b(-;u_{\widehat{nm}}):\NN\rightarrow\CC$ which satisfy the following:
    \begin{itemize}
        \item $f\left((u_n)_{n\in I}\right)=a(u_n;u_{\widehat{nm}})b(u_m;u_{\widehat{nm}})\left(\frac{u_n}{u_m}\right)\left(\frac{u_m}{u_n}\right)$;
        \item $\abs{a(u_n;u_{\widehat{nm}})}\leq 1$;
        \item $\abs{b(u_m;u_{\widehat{nm}})}\leq 1$.
    \end{itemize}
\item For $n\in I$ we say $u_n$ is \textbf{frobenian} if there exists a constant $C(u_{\widehat{n}})$ which only depends on $u_{\widehat{n}}$ and a $\{p;p\mid q_{\text{frob}}\}\cup\{p:p\mid u_m, m\neq n\}$-frobenian multiplicative function $\rho(-,u_{\widehat{n}})$ which satisfy the following:
    \begin{itemize}
        \item $f\left((u_n)_{n\in I}\right)=C(u_{\widehat{n}})\rho(u_n,u_{\widehat{n}})$;
        \item $\abs{C(u_{\widehat{n}})}\leq 1$;
        \item The conductor of $\rho(-,u_{\widehat{n}})$ is at most $q_{\text{frob}}\prod_{u_m \text{linked to }u_n}u_m^6$;
        \item If there exists $u_m$ linked to $u_n$ such that $u_m\neq 1$, then the mean of $\rho(-,u_{\widehat{n}})$ is $0$.
    \end{itemize}
\item For $\epsilon>0$ we say $(u_n)_{n\in I}$ satisfies $\star_{\epsilon}$ if the following holds:
\begin{itemize}
    \item For every frobenian variable $u_n$, if $u_n>e^{\left(\frac{3}{2}\log T\right)^\epsilon}$, then all variables linked to $u_n$ are equal to $1$.
\end{itemize}
\end{enumerate}  
\end{definition}
\begin{remark}
By \cite[Lem.~4.20]{TimSantens} the function $\psi(x,y)=\left(\frac{x}{y}\right)\left(\frac{y}{x}\right)$ is a $(6,2^{24})$-oscillating bilinear characters (see \cite[Def.~4.19]{TimSantens} for the definition), and hence our definition of linked variables is compatible with \cite[Thm.~4.22]{TimSantens} by taking $A=6$ and $q_{\text{osc}}=O(1)$. 
\end{remark}
\begin{lemma}
\label{lem: formula for downward-closed subset N^(loc,U)(T)}
For $T>2$ we have
\begin{align*}
     S^U_\textbf{A}(\boldsymbol{\chi},T)=&\mathop{\sum\sum\sum}_{\substack{(\textbf{u,h,d})\in U\\ \mu(\prod_iu_i\prod_{j\neq i}h_{ij}d_{ij})^2=1}}\prod_i \chi_i(u_i^2)\prod_{j\neq i}\chi_i(\chi_{j}(h_{ij}d_{ij}))\alpha(h_{ij};\theta_\textbf{A})\beta(d_{ij};\theta_\textbf{A})\\
     &+O_{\epsilon}\left(\frac{T^\frac{3}{2}(\log T)^{\frac{5}{2}M_\textbf{A}}}{\abs{\theta_\textbf{A}}^{\frac{1}{2}-\epsilon}}\right).
\end{align*}
\end{lemma}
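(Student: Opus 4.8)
The plan is to unfold the definition of $S^U_{\textbf{A}}(\boldsymbol{\chi},T)$, rewrite the local solubility conditions using the indicator functions $\omega,\gamma$ of Lemmas~\ref{lem: omega formula} and \ref{lem: lambda formula}, split off a ``diagonal'' contribution which is exactly the claimed main term, and bound the remainder by the oscillating bilinear sum estimate \cite[Thm.~4.22]{TimSantens}, exactly as in the proof of \cite[Lem.~5.6]{TimSantens} and the relevant part of \cite[\S5]{TimSantens}.

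First I would unfold. Taking the disjoint union over $N\in\Omega$ in \eqref{eq: S_A(X,T) first simplified form (M condition removed)} deletes the congruence condition defining $N_{\textbf{A},N}(T)$, so $S^U_{\textbf{A}}(\boldsymbol{\chi},T)$ is a sum over all $(\textbf{u},\textbf{v},\textbf{w})\in U$ obeying the height bound, the square-freeness $\mu(m_{\textbf{A}}\prod_iu_i\prod_{j\neq i}v_{ij}w_{ij})^2=1$, the splitting conditions on the $v_{ij},w_{ij}$, the extra $\NAM^{loc}(T)$-restriction on the $w_{ij}$, and local solubility at all $p\nmid m_{\textbf{A}}$, weighted by $\prod_i\chi_i(u_i^2\prod_{j\neq i}v_{ij}w_{ij})$. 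By Lemma~\ref{lem: conditions for local solublity in counting}, together with the fact that $a_k=A_ku_k^2t_{ik}t_{jk}$ is a $p$-adic unit whenever $p\mid v_{ij}$ or $p\mid w_{ij}$ (by square-freeness, and since every prime dividing $A_k$ divides $m_{\textbf{A}}$), all of the conditions on the $v_{ij},w_{ij}$ — including the $\theta_{\textbf{A}}$-splitting conditions of $\NAM(T)$ and square-freeness of the individual $v_{ij},w_{ij}$ — are jointly encoded by the single weight $\prod_{\{i,j\}}\omega(v_{ij};\theta_{\textbf{A}},a_k)\gamma(w_{ij};\theta_{\textbf{A}},a_k)$. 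Using $\omega(v;x,y)=\omega(v;y,x)$, $\gamma(w;x,y)=\gamma(w;y,x)$ I then apply Lemmas~\ref{lem: omega formula} and \ref{lem: lambda formula} with the arguments swapped, writing $v_{ij}=d_{ij}e_{ij}$ and $w_{ij}=h_{ij}f_{ij}$, so that
\begin{equation*}
\omega(v_{ij};\theta_{\textbf{A}},a_k)=\mu(v_{ij})^2\sum_{d_{ij}e_{ij}=v_{ij}}\beta(d_{ij};\theta_{\textbf{A}})\beta(e_{ij};\theta_{\textbf{A}})\left(\frac{a_k}{e_{ij}}\right),
\end{equation*}
and likewise $\gamma(w_{ij};\theta_{\textbf{A}},a_k)=\mu(w_{ij})^2\sum_{h_{ij}f_{ij}=w_{ij}}\alpha(h_{ij};\theta_{\textbf{A}})\alpha(f_{ij};\theta_{\textbf{A}})\left(\frac{-a_k}{f_{ij}}\right)$. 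The $\mu(\cdot)^2$ factors are absorbed into the square-freeness condition, and $S^U_{\textbf{A}}(\boldsymbol{\chi},T)$ becomes a sum over $(\textbf{u},\textbf{d},\textbf{e},\textbf{h},\textbf{f})$ with $(\textbf{u},\textbf{v},\textbf{w})\in U$, of
\begin{equation*}
\prod_i\chi_i\left(u_i^2\prod_{j\neq i}d_{ij}e_{ij}h_{ij}f_{ij}\right)\prod_{\{i,j\}}\beta(d_{ij};\theta_{\textbf{A}})\beta(e_{ij};\theta_{\textbf{A}})\alpha(h_{ij};\theta_{\textbf{A}})\alpha(f_{ij};\theta_{\textbf{A}})\left(\frac{a_k}{e_{ij}}\right)\left(\frac{-a_k}{f_{ij}}\right).
\end{equation*}

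Next I extract the diagonal term $\textbf{e}=\textbf{f}=\textbf{1}$. Then $v_{ij}=d_{ij}$, $w_{ij}=h_{ij}$, all the Jacobi symbols are trivial, the membership condition collapses to the condition on $(\textbf{u},\textbf{h},\textbf{d})$ in the statement, and $\prod_i\chi_i(u_i^2\prod_{j\neq i}d_{ij}h_{ij})$ rearranges — each unordered pair occurring twice — into the character weight of the stated main term; the coprimality of the variables to $m_{\textbf{A}}$ is carried by the extended characters (Remark~\ref{rem: extending chi_i to all of Z/8m_AZ}), which is why the square-free condition in the statement no longer involves $m_{\textbf{A}}$. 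Thus the diagonal term is precisely the stated main term. In the off-diagonal part some $e_{ij}$ or $f_{ij}$ exceeds $1$; there I apply quadratic reciprocity to rewrite each $\left(\frac{a_k}{e_{ij}}\right),\left(\frac{-a_k}{f_{ij}}\right)$ in terms of the oscillating bilinear character $\psi(x,y)=\left(\frac{x}{y}\right)\left(\frac{y}{x}\right)$, linking $e_{ij}$ (resp.\ $f_{ij}$) to the variables making up $a_k=A_ku_k^2\prod_{\ell\neq k}d_{k\ell}e_{k\ell}h_{k\ell}f_{k\ell}$ — the choice of third index $k\notin\{i,j\}$ guarantees $e_{ij},f_{ij}$ do not occur in $a_k$, so there is no self-pairing. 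This puts the off-diagonal sum in the shape treated by \cite[Thm.~4.22]{TimSantens}, with $q_{\mathrm{osc}}=O(1)$ and $A=6$ as recorded in the Remark, in which $e_{ij},f_{ij}$ are frobenian variables: their weights $\beta(-;\theta_{\textbf{A}})$, $\alpha(-;\theta_{\textbf{A}})$ are frobenian multiplicative functions twisted by the quadratic character $m\mapsto\left(\frac{a_k}{m}\right)$, which is non-trivial — hence of mean zero — as soon as a linked variable differs from $1$. Applying \cite[Thm.~4.22]{TimSantens} bounds the off-diagonal sum by a count over a thin downward-open set (where every large frobenian $e_{ij}$ or $f_{ij}$ forces all variables linked to it to equal $1$), and a direct estimate of that count — keeping the full $\abs{\theta_{\textbf{A}}}^{1/2-\epsilon}$ saving and the loss of one multiplicative variable — yields $O_\epsilon\!\left(T^{3/2}(\log T)^{\frac52 M_{\textbf{A}}}\abs{\theta_{\textbf{A}}}^{-1/2+\epsilon}\right)$, as required.

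The main obstacle is the last step: making the frobenian/linked bookkeeping rigorous — carrying the reciprocity signs (themselves frobenian twists depending on residues modulo $4$) and the character factors $\chi_i\chi_j$ through the reduction to $\psi$, checking the conductor bound $q_{\mathrm{osc}}\prod_{u_m\ \mathrm{linked}}u_m^6$ and the ``mean zero when a linked variable is non-trivial'' clause in the definition of a frobenian variable — and then verifying that the resulting thin-set count genuinely has the claimed shape, with the $T^{3/2}$, the exact log-exponent $\tfrac52 M_{\textbf{A}}$, and the uniformity in $\textbf{A}$ all coming out correctly. Everything preceding it is essentially a routine rearrangement of the definitions and of Lemmas~\ref{lem: conditions for local solublity in counting}, \ref{lem: lambda formula} and \ref{lem: omega formula}.
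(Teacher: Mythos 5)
Your proposal is correct and follows essentially the same route as the paper: expand $S^U_{\textbf{A}}(\boldsymbol{\chi},T)$ over all tuples, encode the local conditions at $p\nmid m_{\textbf{A}}$ via Lemma~\ref{lem: conditions for local solublity in counting} as $\prod_{\{i,j\}}\omega(v_{ij};\theta_{\textbf{A}},a_k)\gamma(w_{ij};\theta_{\textbf{A}},a_k)$, open these via Lemmas~\ref{lem: lambda formula} and~\ref{lem: omega formula} to get the factorisations $v_{ij}=d_{ij}e_{ij}$, $w_{ij}=h_{ij}f_{ij}$ with the quadratic symbols $\left(\tfrac{a_k}{e_{ij}}\right)$, $\left(\tfrac{-a_k}{f_{ij}}\right)$, check the linked/frobenian structure (noting $e_{ij},f_{ij}$ do not occur in $a_k$ so there is no self-pairing), apply \cite[Thm.~4.22]{TimSantens}, and then bound the contributions with some $e_{ij}\neq 1$ or $f_{ij}\neq 1$, leaving the $\textbf{e}=\textbf{f}=\textbf{1}$ diagonal as the main term. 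The only small point of presentation is that the paper applies \cite[Thm.~4.22]{TimSantens} to the \emph{full} sum (reducing it to the $\star_\epsilon$-restricted sum plus an acceptable error), and then bounds the off-diagonal piece \emph{inside} the $\star_\epsilon$-region by an explicit computation of $W$, the sum of the means of the frobenian weights of the variables not linked to the fixed $x\in\{e_{ij},f_{ij}\}$ (yielding $W=2M_{\textbf{A}}< \tfrac52 M_{\textbf{A}}$), rather than ``applying Thm.\ 4.22 to the off-diagonal'' — an upward-closed condition like $e_{ij}\neq 1$ does not itself fit the downward-closed setup; but your decomposition is logically equivalent (the diagonal automatically satisfies $\star_\epsilon$, so the complementary $\lnot\star_\epsilon$ piece is entirely off-diagonal), and you have flagged precisely the $W$/conductor/sign bookkeeping as the remaining check.
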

\raggedbottom
\begin{proof}
We adapt the proof of \cite[Lem.~5.9]{TimSantens}. Write $\textbf{v}=\textbf{d}\textbf{e}$ and $\textbf{w}=\textbf{h}\textbf{f}$. Let $d_{ij}$ and $e_{ij}$ denote the parts of $\textbf{d}$ and $\textbf{e}$ such that $d_{ij}e_{ij}=v_{ij}$, and similarly let $h_{ij}$ and $f_{ij}$ denote the parts of $\textbf{h}$ and $\textbf{f}$ such that $h_{ij}f_{ij}=w_{ij}$. Substituting Lemma~\ref{lem: lambda formula} and Lemma~\ref{lem: omega formula} into \eqref{eq: S_A(X,T) first simplified form (M condition removed)} we have
\begin{align*}
   S^U_\textbf{A}(\boldsymbol{\chi},T)&=\mathop{\sum\sum\sum\sum\sum}_{\substack{(\textbf{u,hf,de})\in U\\ \mu(\prod_iu_i\prod_{j\neq i}h_{ij}f_{ij}d_{ij}e_{ij})^2=1}}\prod_i \chi_i(u_i^2)\prod_{j\neq i}\Big[\chi_i(\chi_{j}(h_{ij}f_{ij}d_{ij}e_{ij}))\\
   &\times \alpha(h_{ij};\theta_\textbf{A})\alpha(f_{ij};\theta_\textbf{A})\left(\frac{-A_k\prod_{\ell\neq k}d_{\ell k}e_{\ell k}f_{\ell k}h_{\ell k}}{f_{ij}}\right)\\
   &\times\beta(d_{ij};\theta_\textbf{A})\beta(e_{ij};\theta_\textbf{A})\left(\frac{A_k\prod_{\ell\neq k}d_{\ell k}e_{\ell k}f_{\ell k}h_{\ell k}}{e_{ij}}\right)\Big].
\end{align*}
We will let $F$ denote the function within the summation. Clearly we have that each of the pairs of variables $(f_{ki},f_{kj}), (f_{ki},d_{kj}),(f_{ki},h_{kj}),(e_{ki},e_{kj}), (e_{ki},d_{kj})$ and $(e_{ki},h_{kj})$ are linked. We now prove every variable is frobenian (with $q_{frob}=O(1)\abs{\theta_\textbf{A}}^{O(1)}$).\par
If we fix all variables other than $u_i$, then $F$ can be written as a constant whose absolute value is bounded by $1$ (which does not depend on $u_i$) multiplied by $\chi_i(u_i^2)$.\par
If we fix all variables other than $h_{ij}$, then $F$ can be written as a constant whose absolute value is bounded by $1$ (which does not depend on $h_{ij}$) multiplied by the function
\begin{equation*}
    \chi_i(\chi_{j}(h_{ij}))\alpha(h_{ij};\theta_\textbf{A})\left(\frac{h_{ij}}{\prod_{\ell\neq j}f_{i\ell}e_{i\ell}}\right).
\end{equation*}
If we fix all variables other than $d_{ij}$, then $F$ can be written as a constant whose absolute value is bounded by $1$ (which does not depend on $d_{ij}$) multiplied by the function
\begin{equation*}
    \chi_i(\chi_{j}(d_{ij}))\beta(d_{ij};\theta_\textbf{A})\left(\frac{d_{ij}}{\prod_{\ell\neq k}f_{\ell k}e_{\ell k}}\right).
\end{equation*}
If we fix all variables other than $f_{ij}$, then $F$ can be written as a constant whose absolute value is bounded by $1$ (which does not depend on $f_{ij}$) multiplied by the function
\begin{equation*}
    \chi_i(\chi_{j}(f_{ij}))\alpha(f_{ij};\theta_\textbf{A})\left(\frac{f_{ij}}{\prod_{\ell\neq k}f_{\ell k}e_{\ell k}}\right)\left(\frac{-A_k\prod_{\ell\neq k}d_{k\ell}e_{k\ell}f_{k\ell}h_{k\ell}}{f_{ij}}\right).
\end{equation*}
If we fix all variables other than $e_{ij}$, then $F$ can be written as a constant whose absolute value is bounded by $1$ (which does not depend on $e_{ij}$) multiplied by the function
\begin{equation*}
    \chi_i(\chi_{j}(e_{ij}))\beta(e_{ij};\theta_\textbf{A})\left(\frac{e_{ij}}{\prod_{\ell\neq k}f_{\ell k}e_{\ell k}}\right)\left(\frac{A_k\prod_{\ell\neq k}d_{k\ell}e_{k\ell}f_{k\ell}h_{k\ell}}{e_{ij}}\right).
\end{equation*}
Each of the above functions listed are the product of frobenian multiplicative functions of conductor $O(1)\abs{\theta_\textbf{A}}^{O(1)}$. Thus by \cite[Lem.~4.3]{TimSantens} these products are also frobenian multiplicative functions of conductor $O(1)\abs{\theta_\textbf{A}}^{O(1)}$. Furthermore, these frobenian multiplicative functions have mean $0$ when there is a variable linked to $x$ that is not equal to $1$, and hence every variable is frobenian (for $q_{\text{frob}}=O(1)\abs{\theta_\textbf{A}}^{O(1)}$). Therefore by \cite[Thm.~4.22]{TimSantens} we have
\begin{equation*}
S^U_\textbf{A}(\boldsymbol{\chi},T)=\mathop{\sum\sum\sum\sum\sum}_{\substack{(\textbf{u,hf,de})\in U \text{ satisfies } \star_{\epsilon}\\ \mu(\prod_iu_i\prod_{j\neq i}h_{ij}f_{ij}d_{ij}e_{ij})^2=1}}F(\textbf{u},\textbf{h},\textbf{f},\textbf{d},\textbf{e})+   O_{\epsilon}\left(\frac{T^\frac{3}{2}(\log T)^{\frac{5}{2}M_\textbf{A}}}{\abs{\theta_\textbf{A}}^{{\frac{1}{2}}-\epsilon}}\right).
\end{equation*}
We now consider the case when one of the variables is not equal to $1$ and compute its contribution to the overall sum. Let us fix such a variable and denote it by $x$. We can bound the part of $S^U_\textbf{A}(\boldsymbol{\chi},T)$ which is the sum over all variables with $x$ fixed above by
\begin{align*}
  \mathop{\sum_\textbf{u}\sum_\textbf{h}\sum_\textbf{f}\sum_\textbf{d}\sum_\textbf{e}}_{\substack{\abs{A_i}u_i^2\prod_{j\neq i}h_{ij}d_{ij}f_{ij}e_{ij}\leq T\\ y\leq e^{\left(\frac{3}{2}\log T\right)^\epsilon}, \text{ for } y \text{ linked to } x}}\prod_{i<j}\Big[\alpha(h_{ij};\theta_\textbf{A})\alpha(f_{ij};\theta_\textbf{A})\beta(d_{ij};\theta_\textbf{A})\beta(e_{ij};\theta_\textbf{A})\Big].  
\end{align*}
Summing over the $u_i$ and applying \cite[Prop.~4.12]{TimSantens} we may bound this sum above by
\begin{equation*}
\ll_{\epsilon}\frac{T^\frac{3}{2}(\log T)^{W}}{\abs{\theta_\textbf{A}}^{\frac{1}{2}-\epsilon}}\mathop{\sum}_{\substack{y \text{ linked to } x\\ y\leq e^{\left(\frac{3}{2}\log T\right)^\epsilon}}} \frac{1}{\prod_y y}\ll_{\epsilon}\frac{T^\frac{3}{2}(\log T)^{W+N\epsilon}}{\abs{\theta_\textbf{A}}^{\frac{1}{2}-(N+1)\epsilon}},
\end{equation*}
where $N=15$ is the total number of variables and $W$ is the sum of the means of the frobenian multiplicative functions corresponding to the variables not linked to $x$ (that is, the mean of $\alpha(h_{ij};\theta_\textbf{A}), \alpha(f_{ij};\theta_\textbf{A}), \beta(d_{ij};\theta_\textbf{A})$ or $\beta(e_{ij};\theta_\textbf{A})$ if $h_{ij}, f_{ij}, d_{ij}$ or $e_{ij}$ are not linked to $x$, respectively).\par
If $x=f_{ij}$, then $x$ is not linked to $f_{ij}, e_{ij}, d_{ij}$ and $h_{ij}$. Thus we have 
\begin{equation*}
    W=\frac{1}{4}+\frac{1}{4}+\frac{1}{8}+\frac{1}{8}=\frac{3}{4}=2M_\textbf{A},
\end{equation*}
when $\theta_\textbf{A}\not\in\pm\Q^{\times 2}$ and 
\begin{equation*}
    W=\frac{1}{4}+\frac{1}{4}+\frac{1}{4}+\frac{1}{4}=1=2M_\textbf{A},
\end{equation*}
when $\theta_\textbf{A}\in-\Q^{\times 2}$. Therefore the contribution of the sum when $f_{ij}$ is not equal to $1$ is less than
\begin{equation*}
 O_{\epsilon}\left(\frac{T^\frac{3}{2}(\log T)^{\frac{5}{2}M_\textbf{A}}}{\abs{\theta_\textbf{A}}^{{\frac{1}{2}}-\epsilon}}\right),   
\end{equation*}
and hence we may assume $f_{ij}=1$ for all $i\neq j$.\par
If $x=e_{ij}$, then $x$ is not linked to $f_{ij}, e_{ij}, d_{ij}$ and $h_{ij}$. Thus we have
\begin{equation*}
    W=\frac{1}{4}+\frac{1}{4}+\frac{1}{8}+\frac{1}{8}=\frac{3}{4}=2M_\textbf{A},
\end{equation*}
when  $\theta_\textbf{A}\not\in\pm\Q^{\times 2}$ and 
\begin{equation*}
    W=\frac{1}{4}+\frac{1}{4}+\frac{1}{4}+\frac{1}{4}=1=2M_\textbf{A},
\end{equation*}
when $\theta_\textbf{A}\in-\Q^{\times 2}$. Therefore the contribution of the sum when $e_{ij}$ is not equal to $1$ is less than 
\begin{equation*}
   O_{\epsilon}\left(\frac{T^\frac{3}{2}(\log T)^{\frac{5}{2}M_\textbf{A}}}{\abs{\theta_\textbf{A}}^{{\frac{1}{2}}-\epsilon}}\right), 
\end{equation*}
and hence we may assume $e_{ij}=1$ for all $i\neq j$. Thus the result follows.
\end{proof}
\begin{remark}
The power of $\log T$ in the error term can actually be chosen to be any value strictly greater than $2M_\textbf{A}$. Our choice of $\frac{5}{2}M_\textbf{A}$ was only chosen so that it does not interfere with the results in the following section. 
\end{remark}

\begin{proof}[Proof of Lemma~\ref{lem: bound for downward open sets for reduction}]
By substituting Lemma~\ref{lem: formula for downward-closed subset N^(loc,U)(T)} into \eqref{eq: N_A,M^loc(T) second simplified form (M condition removed)} for any downward-closed set $U$ we have
\begin{align}
\label{eq: N^(loc,U)(T) upper bound 1}
    \#\NAM^{loc,U}(T)\ll_{\epsilon} \mathop{\sum\sum\sum}_{\substack{(\textbf{u,h,d})\in U}} \prod_{i<j}\alpha(h_{ij};\theta_\textbf{A})\beta(d_{ij};\theta_\textbf{A})
     +O_{\epsilon}\left(\frac{T^\frac{3}{2}(\log T)^{\frac{5}{2}M_\textbf{A}}}{\abs{\theta_\textbf{A}}^{\frac{1}{2}-\epsilon}}\right).
\end{align}
We now deal with the two sets independently:\par
Let $U$ be the left-hand side set in \eqref{eq: first downward open set} (in Lemma~\ref{lem: bound for downward open sets for reduction}), then summing over $u_i$ we have that the right-side of \eqref{eq: N^(loc,U)(T) upper bound 1} is bounded by
\begin{equation}
\label{eq: case 1 for bounds for downward-closed subsets}
    \frac{T^{\frac{3}{2}}}{\abs{\theta_\textbf{A}}^\frac{1}{2}}\mathop{\sum}_{\substack{h_{ij}\leq (\log T^\frac{3}{2})^C}} \prod_{i<j} \frac{\alpha(h_{ij};\theta_\textbf{A})}{h_{ij}} \mathop{\sum}_{\substack{d_{ij}\leq T}}\prod_{i<j}\frac{\beta(d_{ij};\theta_\textbf{A})}{d_{ij}}
\end{equation}
Now by \cite[Thm.~4.12]{TimSantens} and partial summation we have
\begin{equation*}
    \mathop{\sum}_{\substack{h_{ij}\leq (\log T^\frac{3}{2})^C}} \prod_{i<j} \frac{\alpha(h_{ij};\theta_\textbf{A})}{h_{ij}}\ll_{\epsilon} \abs{\theta_\textbf{A}}^{3\epsilon}(\log (\log T^\frac{3}{2})^{C})^{W_1},
\end{equation*}
and
\begin{equation*}
   \mathop{\sum}_{\substack{d_{ij}\leq T}}\prod_{i<j}\frac{\beta(d_{ij};\theta_\textbf{A})}{d_{ij}}\ll_{\epsilon}\abs{\theta_\textbf{A}}^{3\epsilon} (\log T)^{W_2}, 
\end{equation*}
where $W_1= \frac{3}{4}$ and $W_2= \frac{3}{4}$ if $\theta_\textbf{A}\in-\Q^{\times 2}$ and $W_1=\frac{3}{8}$ and $W_2=\frac{3}{4}$ if $\theta_\textbf{A}\not\in\pm\Q^{\times 2}$. Therefore substituting into \eqref{eq: case 1 for bounds for downward-closed subsets} we have
\begin{equation*}
    \ll_{\epsilon} \abs{\theta_\textbf{A}}^{6\epsilon-\frac{1}{2}}T^\frac{3}{2}\log T^{W_2} (\log (\log T^\frac{3}{2})^{C})^{W_1}\ll_C \frac{T^\frac{3}{2}(\log T)^{\frac{5}{2}M_\textbf{A}}}{\abs{\theta_\textbf{A}}^{-6\epsilon+\frac{1}{2}}}.
\end{equation*}
Now let $U$ be the left-hand side set in \eqref{eq: second downward open set} (in Lemma~\ref{lem: bound for downward open sets for reduction}), then without loss of generality assume $i=0$. Summing over $u_0$, we may bound \eqref{eq: N^(loc,U)(T) upper bound 1} above by
\begin{equation*}
 \frac{T^\frac{1}{2}}{\abs{A_0}^\frac{1}{2}}\mathop{\sum}_{\substack{u_1,u_{2}\leq (\log T^\frac{3}{2})^C}} \mathop{\sum\sum}_{\substack{h_{ij},d_{ij}\\\abs{A_i}u_i^2\prod_{i<j}h_{ij}d_{ij}\leq T}} \alpha(h_{12};\theta_\textbf{A}) \beta(d_{12};\theta_\textbf{A}) \prod_{j\neq 0}\frac{\alpha(h_{0j};\theta_\textbf{A})}{h_{0j}^\frac{1}{2}} \frac{\beta(d_{0j};\theta_\textbf{A})}{d_{0j}^{\frac{1}{2}}}.
\end{equation*}
Now summing over $h_{01},h_{02}$ and using that $\alpha(h_{ij};\theta_\textbf{A})\leq 1$ we have
\begin{equation}
\label{eq: case 2 for bounds for downward-closed subsets}
 \frac{T^\frac{3}{2}}{\abs{\theta_\textbf{A}}^\frac{1}{2}}\mathop{\sum}_{\substack{u_1,u_{2}\leq (\log T^\frac{3}{2})^C}} \frac{1}{u_1u_2}\mathop{\sum\sum}_{\substack{h_{12},d_{ij}\\\abs{A_i}u_i^2\prod_{i<j}h_{ij}d_{ij}\leq T}} \frac{\alpha(h_{12};\theta_\textbf{A})}{h_{12}} \prod_{i<j} \frac{\beta(d_{ij};\theta_\textbf{A})}{d_{ij}}.
\end{equation}
As in the previous argument, by using \cite[Thm.~4.12]{TimSantens} and partial summation we have
\begin{equation*}
    \mathop{\sum}_{\substack{h_{12}\leq T}} \frac{\alpha(h_{12};\theta_\textbf{A})}{h_{12}}\ll_{\epsilon} \abs{\theta_\textbf{A}}^{\epsilon}(\log T)^{W_1'},
\end{equation*}
\begin{equation*}
   \mathop{\sum}_{\substack{d_{ij}\leq T}}\prod_{i<j}\frac{\beta(d_{ij};\theta_\textbf{A})}{d_{ij}}\ll_{\epsilon}\abs{\theta_\textbf{A}}^{3\epsilon} (\log T)^{W_2'}, 
\end{equation*}
and
\begin{equation*}
    \mathop{\sum}_{\substack{u_1,u_2\leq (\log T^\frac{3}{2})^C}} \frac{1}{u_1u_2}\ll (\log(\log T^\frac{3}{2})^C)^{2},
\end{equation*}
where $W_1'=\frac{1}{4}$ and $W_2'=\frac{3}{4}$ if $\theta_\textbf{A}\in-\Q^{\times 2}$ and $W_1'=\frac{1}{8}$ and $W_2'=\frac{3}{4}$ if $\theta_\textbf{A}\not\in\pm\Q^{\times 2}$. Therefore substituting into \eqref{eq: case 2 for bounds for downward-closed subsets} we have
\begin{equation*}
    \ll_{\epsilon} \abs{\theta_\textbf{A}}^{4\epsilon-\frac{1}{2}}T^\frac{3}{2}\log T^{W_1'+W_2'} (\log(\log T^\frac{3}{2})^C)^{2}\ll_C \frac{T^\frac{3}{2}(\log T)^{\frac{5}{2}M_\textbf{A}}}{\abs{\theta_\textbf{A}}^{-4\epsilon+\frac{1}{2}}}.
\end{equation*}

\end{proof}
\subsubsection{Simplifying $S^U_\textbf{A}(\boldsymbol{\chi},T)$}

\begin{lemma}
\label{lem: defining lambda function}
Let $x\in \Z\setminus\{0\}$ such that $x$ is not a square. Let 
\begin{equation*}
    \lambda(t;x):=\mu(t)^2\mathop{\sum}_{\substack{hd=t\\ (x \prod_{p\in S}p,t)=1}}\alpha(h;x)\beta(d;x),
\end{equation*}
where $(y,z)$ denotes the greatest common divisor of $y$ and $z$, then $\lambda(t:x)$ is a $S\cup \{p:p\mid x\}$-frobenian multiplicative function of mean $\frac{1}{2}$ when $x \in -\Q^{\times 2}$ and mean $\frac{3}{8}$ when $x\not\in\pm \Q^{\times 2}$.
\end{lemma}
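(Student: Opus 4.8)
The plan is to verify the three defining conditions of an $(S\cup\{p:p\mid x\})$-frobenian multiplicative function (Definition~\ref{def: frob mult functions}) directly, using the frobenian structures of $\alpha(-;x)$ and $\beta(-;x)$ established in the two lemmas above. First I would note that $\lambda(-;x)$ is multiplicative, being the pointwise product of the Dirichlet convolution $\alpha(-;x)\ast\beta(-;x)$ with $\mu(\cdot)^2$ and with the indicator function of integers coprime to $x\prod_{p\in S}p$, each of which is multiplicative. The two size bounds are then immediate: $\lambda(p^k;x)=0$ for $k\geq 2$, and $|\lambda(p;x)|=|\alpha(p;x)+\beta(p;x)|\leq 1$ (using $\alpha(1;x)=\beta(1;x)=1$), so $\lambda(-;x)$ is supported on squarefree integers and satisfies $|\lambda(n;x)|\leq 1$ for all $n$; hence one may take $N=C_\epsilon=1$.

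For the Galois datum I would take $L:=\Q(\sqrt{-1},\sqrt{x})$, whose ramified primes all divide $2x$ and therefore lie in $S\cup\{p:p\mid x\}$ since $2\in S$. For a prime $p\notin S\cup\{p:p\mid x\}$ one has $\lambda(p;x)=\alpha(p;x)+\beta(p;x)$, which I would evaluate by cases using the prime values of $\alpha$ and $\beta$: if $p\equiv 1\bmod 4$ then $\alpha(p;x)=0$, so $\lambda(p;x)=\beta(p;x)$ equals $\tfrac12$ or $0$ according as $x\in\Q_p^{\times 2}$ or not; while if $p\equiv 3\bmod 4$ then exactly one of $\alpha(p;x),\beta(p;x)$ equals $\tfrac12$ and the other vanishes, so $\lambda(p;x)=\tfrac12$ unconditionally. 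Reading "$p\equiv 1\bmod 4$" as "$p$ splits in $\Q(\sqrt{-1})$" and "$x\in\Q_p^{\times 2}$" as "$p$ splits in $\Q(\sqrt{x})$", this exhibits $\lambda(p;x)=\phi(\Frob_p)$ for an explicit class function $\phi$ on $\Gal(L/\Q)$: when $x\notin\pm\Q^{\times 2}$, $\Gal(L/\Q)\cong(\Z/2\Z)^2$ and $\phi$ takes the value $\tfrac12$ on every element except the one that is trivial on $\Q(\sqrt{-1})$ and nontrivial on $\Q(\sqrt{x})$, where it is $0$; when $-x\in\Q^{\times 2}$ we have $\Q(\sqrt{x})=\Q(\sqrt{-1})$, so $L=\Q(\sqrt{-1})$ and $\phi\equiv\tfrac12$.

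Finally I would read off the mean: from the class functions above it is $\tfrac14(\tfrac12+\tfrac12+\tfrac12+0)=\tfrac38$ when $x\notin\pm\Q^{\times 2}$ and $\tfrac12(\tfrac12+\tfrac12)=\tfrac12$ when $-x\in\Q^{\times 2}$, as claimed. As a sanity check, since $\lambda(p;x)=\alpha(p;x)+\beta(p;x)$ at every prime outside $S\cup\{p:p\mid x\}$ and the mean of a frobenian multiplicative function is the density of its prime values, the mean of $\lambda(-;x)$ equals the sum of the means of $\alpha(-;x)$ and $\beta(-;x)$, namely $\tfrac18+\tfrac14=\tfrac38$ and $\tfrac14+\tfrac14=\tfrac12$ respectively. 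There is no serious obstacle here; the only points requiring care are the Frobenius–splitting dictionary and, in the degenerate case $-x\in\Q^{\times 2}$, observing that $\Q(\sqrt{x})$ collapses onto $\Q(\sqrt{-1})$ so that $\lambda(p;x)=\tfrac12$ for all relevant $p$.
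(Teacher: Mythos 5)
Your proof is correct and is essentially the standard direct verification that the paper delegates to Santens' discussion. You correctly identify that for a prime $p$ coprime to $x\prod_{q\in S}q$ one has $\lambda(p;x)=\alpha(p;x)+\beta(p;x)$, your case analysis on $p\bmod 4$ and on whether $x\in\Q_p^{\times2}$ is right (including the observation that $\alpha(p;x)$ and $\beta(p;x)$ cannot both be nonzero), the choice $L=\Q(\sqrt{-1},\sqrt{x})$ gives ramification only inside $S\cup\{p:p\mid x\}$ since $2\in S$, and the mean computations (and the linearity sanity check against the means of $\alpha$ and $\beta$) yield $3/8$ and $1/2$ in the respective cases. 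The only point worth stating explicitly is what you implicitly use: the coprimality condition $(x\prod_{p\in S}p,\,t)=1$ inside the sum depends only on $t$, so it factors out as an indicator and forces $\lambda(p;x)=0$ precisely at the excluded primes, which is compatible with the frobenian definition because that definition only constrains $f(p)$ for $p$ outside the exceptional set.
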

\begin{proof}
The proof follows from the discussion after the proof of \cite[Lem.~5.4]{TimSantens} after appropriately changing the functions.    
\end{proof}
Now consider $S_\textbf{A}(\boldsymbol{\chi},T)$. As in Remark~\ref{rem: extending chi_i to all of Z/8m_AZ} we have that $\chi_i(x)=0$ when $(m_\textbf{A},x)\neq 1$, and hence substituting Lemma~\ref{lem: defining lambda function} into Lemma~\ref{lem: formula for downward-closed subset N^(loc,U)(T)} we have
\begin{equation}
\label{eq: S_A(x,T) with only mu left to deal with}
\begin{split}
     S_\textbf{A}(\boldsymbol{\chi},T)=&\mathop{\sum_\textbf{u}\sum_\textbf{t}}_{\substack{\abs{A_i}u_i^2\prod_{j\neq i} t_{ij}\leq T}}\mu(\prod_iu_i\prod_{j\neq i}t_{ij})^2\prod_i \chi_i(u_i^2)\prod_{j\neq i}\chi_i(\chi_{j}(t_{ij}))\lambda(t_{ij};\theta_\textbf{A})\\
     &+O_{\epsilon}\left(\frac{T^\frac{3}{2}(\log T)^{\frac{5}{2}M_\textbf{A}}}{\abs{\theta_\textbf{A}}^{\frac{1}{2}-\epsilon}}\right).
\end{split} 
\end{equation}

\subsubsection{Dealing with $\mu(-)$ condition on $\NAM^{loc}(T)$}

\begin{lemma}
\label{lem: dealing with mu(-) in terms of kappa}
There exists a $6$-variable multiplicative function $\kappa$ such that
\begin{equation*} \mu(\prod_iu_i\prod_{j\neq i}t_{ij})^2=\mathop{\sum}_{\substack{\textbf{f}\\  f_{ij}\mid t_{ij}}} \mathop{\sum}_{\substack{ \textbf{g}\\ g_{i}\mid u_i}}\kappa(\textbf{f},\textbf{g}).
\end{equation*}
\end{lemma}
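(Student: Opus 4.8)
The plan is to prove this by a purely formal multivariate Möbius inversion. Write $N := \prod_i u_i\prod_{j\neq i}t_{ij}$; this is a fixed monomial in the six variables $(t_{01},t_{02},t_{12},u_0,u_1,u_2)\in\NN^6$, so for every prime $p$ the valuation $v_p(N)$ is a fixed non\-negative integer linear combination of the $v_p(t_{ij})$ and $v_p(u_i)$. Hence the function
\[
H(\textbf{t},\textbf{u}):=\mu(N)^2=\prod_{p}\big[\,v_p(N)\le 1\,\big]
\]
is multiplicative as a six-variable arithmetic function $\NN^6\to\{0,1\}$: if the componentwise products $\prod t_{ij}\prod u_i$ and $\prod t'_{ij}\prod u'_i$ are coprime then $H(\textbf{t}\textbf{t}',\textbf{u}\textbf{u}')=H(\textbf{t},\textbf{u})\,H(\textbf{t}',\textbf{u}')$, and $H(1,\dots,1)=1$.

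Next I would invoke the elementary fact that the componentwise divisor-summation transform
\[
g\ \longmapsto\ \widehat g(\textbf{t},\textbf{u}):=\mathop{\sum}_{\substack{\textbf{f}\\ f_{ij}\mid t_{ij}}}\ \mathop{\sum}_{\substack{\textbf{g}\\ g_{i}\mid u_i}} g(\textbf{f},\textbf{g})
\]
is a bijection from multiplicative six-variable functions to multiplicative six-variable functions: it is inverted by the six-variable Möbius inversion, namely convolution against $\boldsymbol\mu(\textbf{f},\textbf{g}):=\prod_{i<j}\mu(f_{ij})\prod_i\mu(g_i)$, and since a Dirichlet convolution of multiplicative functions is multiplicative, both $\widehat{(\cdot)}$ and its inverse preserve (and reflect) multiplicativity. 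I therefore define $\kappa$ to be the unique multiplicative function with $\widehat\kappa=H$; explicitly, on prime powers it is the mixed finite difference
\[
\kappa\big(p^{\delta_{01}},p^{\delta_{02}},p^{\delta_{12}},p^{\gamma_0},p^{\gamma_1},p^{\gamma_2}\big)=\sum_{\boldsymbol\varepsilon\in\{0,1\}^{6}}(-1)^{|\boldsymbol\varepsilon|}\,H\big(p^{\delta_{01}-\varepsilon_{1}},\dots,p^{\gamma_{2}-\varepsilon_{6}}\big),
\]
with the convention that $H$ vanishes on any tuple having a negative exponent, extended multiplicatively to all of $\NN^6$ with $\kappa(1,\dots,1)=1$.

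It then remains to verify the claimed identity, which is immediate: since $\kappa$ is multiplicative, $\widehat\kappa$ factors as a product over primes, and the local factor at $p$ telescopes to $H\big(p^{v_p(t_{01})},\dots,p^{v_p(u_2)}\big)=[\,v_p(N)\le1\,]$ by the defining formula for $\kappa$; multiplying over all $p$ gives $\widehat\kappa=H=\mu(N)^2$, which is the assertion. I do not expect any genuine obstacle here, as the argument is entirely formal; the only points needing a little care are checking that the componentwise divisor-sum transform really preserves and reflects multiplicativity (so that $\kappa$ is a bona fide multiplicative function and not merely some function on $\NN^6$) and the bookkeeping of the inclusion–exclusion at prime powers. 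One may also record in passing that, since $\mu(n)^2$ is supported on squarefree $n$, the function $\kappa(\textbf{f},\textbf{g})$ vanishes unless the $f_{ij}$ and $g_i$ are squarefree and pairwise coprime, which is what makes $\kappa$ harmless when it is substituted back into the expression for $S_\textbf{A}(\boldsymbol\chi,T)$ in the next subsection.
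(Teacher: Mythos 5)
Your proposal is correct and takes essentially the same route as the paper: both are six-variable M\"obius inversion, defining $\kappa$ as the unique multiplicative function whose divisor-summation transform recovers $\mu(\prod_iu_i\prod_{j\neq i}t_{ij})^2$. The only difference is packaging: the paper writes down the six-variable Dirichlet series of $\mu(\cdot)^2$, divides off the six zeta factors, and reads $\kappa$ from the resulting Euler product, whereas you invert directly at prime powers via the inclusion--exclusion (mixed finite difference) formula. The two formulations are interchangeable for proving the identity; the paper's Dirichlet-series version has the small side benefit that it records in passing the absolute convergence of $F(\textbf{s},\textbf{s}')$ in $\operatorname{Re}(s_i),\operatorname{Re}(s'_{ij})>\tfrac12$, which is not needed for the lemma itself but is used later in the proof of Lemma~\ref{lem: assymptotic for S_A(1,T)}. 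Your closing observation that $\kappa$ is supported on squarefree, pairwise coprime tuples is also correct and matches the support the Euler-product formula implies.
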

\begin{proof}
This equality follows from M\"obius inversion. Explicitly, let $\kappa$ denote the $6$-variable multiplicative function with the Dirichlet series
\begin{align*}
    F(\textbf{s},\textbf{s'})=\sum_{\textbf{u}}\sum_\textbf{t}\frac{\mu(\prod_iu_i\prod_{j\neq i}t_{ij})^2}{\prod_iu_i^{s_i}\prod_{j\neq i}t_{ij}^{s_{ij}'}}\prod_i\zeta(s_i)^{-1}\prod_{j\neq i}\zeta(s_{ij}')^{-1}.
\end{align*}  
For $s_i,s'_{ij}>\frac{1}{2}$ we have
\begin{equation*}
    F(\textbf{s},\textbf{s'})=\prod_p\left(1+\sum_ip^{-s_i}+\sum_{i<j}p^{-s_{ij}'}\right)\prod_i\left(1-p^{-s_i}\right)\prod_{j\neq i}\left(1-p^{-s_{ij}'}\right).
\end{equation*}
Expanding out the terms of the product over $p$ we find that we can bound these inner terms by $1$, and hence the product converge absolutely. Thus this series converges absolutely when $s_i,s'_{ij}>\frac{1}{2}$ for all $i\neq j$. Therefore by M\"obius inversion we have
\begin{equation*}
\mu(\prod_iu_i\prod_{j\neq i}t_{ij})^2=\mathop{\sum}_{\substack{\textbf{f}\\  f_{ij}\mid t_{ij}}} \mathop{\sum}_{\substack{ \textbf{g}\\ g_{i}\mid u_i}}\kappa(\textbf{f},\textbf{g}).\qedhere
\end{equation*}
\end{proof}
Now rewriting $u_i=g_iu_i$ and $t_{ij}=f_{ij}t_{ij}$ for all $i\neq j$, and substituting Lemma~\ref{lem: dealing with mu(-) in terms of kappa} into \eqref{eq: S_A(x,T) with only mu left to deal with} we have
\begin{equation*}
\label{eq: S_A(x,T) with only kappa left to deal with}
\begin{split}
     S_\textbf{A}(\boldsymbol{\chi},T)=&\mathop{\sum_\textbf{f}\sum_\textbf{g}\sum_\textbf{u}\sum_\textbf{t}}_{\substack{\abs{A_i}g_i^2u_i^2\prod_{j\neq i} f_{ij}t_{ij}\leq T\\ (f_{ij},t_{ij})=1}}\kappa(\textbf{f},\textbf{g})\prod_i \chi_i(g_i^2u_i^2)\prod_{j\neq i}\chi_i(\chi_{j}(f_{ij}t_{ij}))\lambda(f_{ij};\theta_\textbf{A})\lambda(t_{ij};\theta_\textbf{A})\\
     &+O_{\epsilon}\left(\frac{T^\frac{3}{2}(\log T)^{\frac{5}{2}M_\textbf{A}}}{\abs{\theta_\textbf{A}}^{\frac{1}{2}-\epsilon}}\right).
\end{split} 
\end{equation*}

\subsubsection{Dealing with $\chi_i(-)$ condition on $S_\textbf{A}(\chi,T)$}
\begin{lemma}
\label{lem: chi is a 2-torsion element reduction}
Let $\boldsymbol{\chi}=(\chi_0,\chi_1,\chi_2)\in\left(\Gamma_\textbf{A}^{\vee} \right)^3$. If there exists $\chi\in \Gamma^{\vee}_\textbf{A}[2]$ such that $\chi_i=\chi$ for all $i$, then we have
\begin{align*}
 S_\textbf{A}(\boldsymbol{\chi},T)=S_\textbf{A}(\boldsymbol{1},T)= &\mathop{\sum_\textbf{f}\sum_\textbf{g}\sum_\textbf{u}\sum_\textbf{t}}_{\substack{\abs{A_i}g_i^2u_i^2\prod_{j\neq i}f_{ij}t_{ij}\leq T\\ (m_\textbf{A},\prod_iu_ig_i)=(f_{ij},t_{ij})=1}}\kappa(\textbf{f},\textbf{g})\prod_{i<j}\lambda(f_{ij};\theta_\textbf{A})\lambda(t_{ij};\theta_\textbf{A})\\
 &+ O_{\epsilon}\left(\frac{T^\frac{3}{2}\left(\log T\right)^{\frac{5}{2}M_\textbf{A}}}{\abs{\theta_\textbf{A}}^{\frac{1}{2}-\epsilon}}\right).
\end{align*} 
Otherwise we have
\begin{equation*}
    S_\textbf{A}(\boldsymbol{\chi},T)=O_{\epsilon}\left(\frac{T^\frac{3}{2}\left(\log T\right)^{\frac{5}{2}M_\textbf{A}}}{\abs{\theta_\textbf{A}}^{\frac{1}{2}-\epsilon}}\right).
\end{equation*}
\end{lemma}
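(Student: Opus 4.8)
The plan is to follow the argument of the corresponding lemma in \cite[\S5]{TimSantens}, working from the expression for $S_\textbf{A}(\boldsymbol{\chi},T)$ obtained just after Lemma~\ref{lem: dealing with mu(-) in terms of kappa}. In that expression the Dirichlet character modulo $8m_\textbf{A}$ attached to the variable $u_i$ is $\psi_i:=\chi_i^2$ — since $\chi_i(g_i^2u_i^2)=\psi_i(g_iu_i)$ when $(g_iu_i,m_\textbf{A})=1$ and vanishes otherwise — and the character attached to $t_{ij}$ is $\psi_{ij}:=\chi_i\chi_j$. A one-line check shows that $\psi_0,\psi_1,\psi_2,\psi_{01},\psi_{02},\psi_{12}$ are \emph{all} trivial precisely when $\boldsymbol{\chi}=(\chi,\chi,\chi)$ for some $\chi\in\Gamma_\textbf{A}^{\vee}[2]$: triviality of $\psi_{01},\psi_{02}$ forces $\chi_1=\chi_2=\chi_0^{-1}$, and then triviality of $\psi_0$ gives $\chi_0^2=\boldsymbol{1}$.

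Assume first that $\boldsymbol{\chi}=(\chi,\chi,\chi)$ with $\chi^2=\boldsymbol{1}$. In the defining sum $S_\textbf{A}(\boldsymbol{\chi},T)=S_\textbf{A}^{\NAM^{loc}(T)}(\boldsymbol{\chi},T)$ the weight is
\[
\prod_i\chi\!\left(u_i^2\prod_{j\neq i}v_{ij}w_{ij}\right)=\chi\!\left(\Big(\textstyle\prod_iu_i\prod_{i<j}v_{ij}w_{ij}\Big)^{2}\right),
\]
which, by $\chi^2=\boldsymbol{1}$ and the extension-by-zero convention of Remark~\ref{rem: extending chi_i to all of Z/8m_AZ}, equals the indicator that $\prod_iu_i\prod_{i<j}v_{ij}w_{ij}$ is coprime to $m_\textbf{A}$ — exactly the weight obtained for $\boldsymbol{\chi}=\boldsymbol{1}$; since the summation set is also independent of $\chi$, this gives $S_\textbf{A}(\boldsymbol{\chi},T)=S_\textbf{A}(\boldsymbol{1},T)$ as an exact identity. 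Substituting $\boldsymbol{\chi}=\boldsymbol{1}$ into the expression after Lemma~\ref{lem: dealing with mu(-) in terms of kappa}, the factors $\prod_i\chi_i(g_i^2u_i^2)$ and $\prod_{i<j}\chi_i(\chi_j(f_{ij}t_{ij}))$ collapse to coprimality indicators, and since $\lambda(f_{ij};\theta_\textbf{A})\lambda(t_{ij};\theta_\textbf{A})$ already forces $(f_{ij}t_{ij},m_\textbf{A})=1$, only the condition $(m_\textbf{A},\prod_iu_ig_i)=1$ survives; this is the displayed formula for $S_\textbf{A}(\boldsymbol{1},T)$.

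Now assume $\boldsymbol{\chi}$ is not of that form, so at least one attached character is nontrivial, and we must show $S_\textbf{A}(\boldsymbol{\chi},T)$ is absorbed by the stated error. Split off the range $\abs{\theta_\textbf{A}}\geq(\log T)^{C(\epsilon)}$ for $C(\epsilon)$ large: there the trivial estimate $S_\textbf{A}(\boldsymbol{\chi},T)\ll T^{3/2}(\log T)^{3M_\textbf{A}}\abs{\theta_\textbf{A}}^{-1/2}$ — obtained by bounding $\kappa,\lambda,\psi_i,\psi_{ij}$ trivially and summing the $u_i$ and the $f_{\cdot\cdot},t_{\cdot\cdot}$ exactly as in the proof of Lemma~\ref{lem: formula for downward-closed subset N^(loc,U)(T)} — already suffices, because $(\log T)^{M_\textbf{A}/2}\ll_{\epsilon}\abs{\theta_\textbf{A}}^{\epsilon}$ there. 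In the complementary range $m_\textbf{A}$, and hence every conductor dividing $8m_\textbf{A}$, is $\ll_{\epsilon}(\log T)^{O(1)}$, and I extract cancellation. If some $\psi_k$ is nontrivial, I carry out the $u_k$-sum first: after the decomposition of Lemma~\ref{lem: dealing with mu(-) in terms of kappa} it equals $\psi_k(g_k)\sum_{u_k\leq X}\psi_k(u_k)\mathbf{1}[(u_k,m_\textbf{A})=1]$, a pure character sum, bounded via Möbius inversion and Pólya–Vinogradov by $\ll_{\epsilon}m_\textbf{A}^{1/2+\epsilon}\ll_{\epsilon}(\log T)^{O(1)}$, which is negligible against the trivial length of the $u_k$-range; summing the remaining variables trivially then gives $\ll_{\epsilon}T(\log T)^{O(1)}\ll T^{3/2}(\log T)^{\frac52 M_\textbf{A}}\abs{\theta_\textbf{A}}^{-1/2+\epsilon}$. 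If instead all $\psi_k$ are trivial but some $\psi_{ij}$ is nontrivial, then every $\chi_\ell$ is $2$-torsion, so $\psi_{ij}$ is a nontrivial quadratic character of conductor dividing $8m_\textbf{A}$, and by \cite[Lem.~4.3]{TimSantens} the function $t\mapsto\psi_{ij}(t)\lambda(t;\theta_\textbf{A})$ is $S\cup\{p:p\mid\theta_\textbf{A}\}$-frobenian multiplicative; granting that its mean is $\leq M_\textbf{A}/2$ (see below), the $t_{ij}$-sum contributes $\ll_{\epsilon}\abs{\theta_\textbf{A}}^{\epsilon}(\log T)^{M_\textbf{A}/2}$ by \cite[Prop.~4.12]{TimSantens} in place of the $(\log T)^{M_\textbf{A}}$ it would contribute with $\lambda$ alone, while the other two $t$-pairs contribute $(\log T)^{2M_\textbf{A}}$ and the $u_i,f$-sums behave as in the trivial estimate; the total is $\ll_{\epsilon}T^{3/2}(\log T)^{2M_\textbf{A}+M_\textbf{A}/2}\abs{\theta_\textbf{A}}^{-1/2+\epsilon}$, as required.

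The one genuinely non-routine point is the mean bound for $\psi_{ij}\lambda(-;\theta_\textbf{A})$, which is also what the error exponent $\tfrac52 M_\textbf{A}$ is calibrated to. On primes $p$ unramified and outside $S$, $\lambda(-;\theta_\textbf{A})$ depends only on $\Frob_p$ in $\Gal(\Q(i,\sqrt{\theta_\textbf{A}})/\Q)$, with mean $M_\textbf{A}$. If $\psi_{ij}$ cuts out one of the quadratic subfields $\Q(i)$, $\Q(\sqrt{\theta_\textbf{A}})$, $\Q(\sqrt{-\theta_\textbf{A}})$ of $\Q(i,\sqrt{\theta_\textbf{A}})$, a direct computation using the values of $\alpha$ and $\beta$ on primes gives mean $\tfrac18$ when $\theta_\textbf{A}\notin\pm\Q^{\times2}$ and mean $0$ when $\theta_\textbf{A}\in-\Q^{\times2}$; otherwise $\psi_{ij}$ corresponds to a quadratic field distinct from all three, hence linearly disjoint from $\Q(i,\sqrt{\theta_\textbf{A}})$, so the mean of $\psi_{ij}\lambda(-;\theta_\textbf{A})$ factors through the nontrivial character $\psi_{ij}$ and vanishes. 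In every case the mean is $\leq M_\textbf{A}/2$, and the rest is a routine transcription of \cite[\S5]{TimSantens}.
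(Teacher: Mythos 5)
Your treatment of the two clean cases is correct. The exact identity $S_\textbf{A}(\boldsymbol{\chi},T)=S_\textbf{A}(\boldsymbol{1},T)$ when $\boldsymbol{\chi}=(\chi,\chi,\chi)$ with $\chi^2=\boldsymbol{1}$ is right: the weight $\prod_i\chi(u_i^2\prod_{j\neq i}v_{ij}w_{ij})=\chi((\prod_iu_i\prod_{i<j}v_{ij}w_{ij})^2)$ collapses to the coprimality indicator, which is forced anyway by the $\mu^2=1$ condition, so there is nothing to estimate. Your diagnosis of when the character data $\psi_i=\chi_i^2$, $\psi_{ij}=\chi_i\chi_j$ are all trivial is also right. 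And your handling of the case where some $\psi_{ij}$ is nontrivial is sound: in that sub-case every $\chi_\ell$ is $2$-torsion, $\psi_{ij}$ is a genuine quadratic character, $\psi_{ij}\lambda(-;\theta_\textbf{A})$ is frobenian by \cite[Lem.~4.3]{TimSantens}, your mean computations (of magnitude $1/8$ or $0$, in either case at most $M_\textbf{A}/2$) check out, and \cite[Prop.~4.12]{TimSantens} turns one factor of $(\log T)^{M_\textbf{A}}$ into $(\log T)^{M_\textbf{A}/2}$, giving exactly $(\log T)^{\frac52 M_\textbf{A}}$. This is essentially the argument of \cite[Lem.~5.11]{TimSantens} to which the paper defers, and this case is also the one that pins down the exponent $\frac52 M_\textbf{A}$.

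The gap is in the case where some $\psi_k=\chi_k^2$ is nontrivial, which you treat by Pólya--Vinogradov. First, the claim that ``summing the remaining variables trivially then gives $\ll_\epsilon T(\log T)^{O(1)}$'' is a bookkeeping error: after replacing the $u_k$-sum by $O(m_\textbf{A}^{1/2+\epsilon})$ you lose the $1/\sqrt{t_{k0}t_{k1}}$ decay that the honest $u_k$-sum $\asymp\sqrt{T/(\abs{A_k}t_{k0}t_{k1})}$ supplies, so the remaining sum over $u_i$ ($i\neq k$) and $t_{ij},f,g$ is $\asymp T^{3/2}\abs{\theta_\textbf{A}}^{-1/2}(\log T)^{3M_\textbf{A}-1}$, not $T(\log T)^{O(1)}$. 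The corrected total is therefore $\ll m_\textbf{A}^{1/2+\epsilon}\,T^{3/2}\abs{\theta_\textbf{A}}^{-1/2}(\log T)^{3M_\textbf{A}-1}$, and for this to be $\ll T^{3/2}(\log T)^{\frac52 M_\textbf{A}}\abs{\theta_\textbf{A}}^{-1/2+\epsilon}$ you need roughly $m_\textbf{A}^{1/2}\ll(\log T)^{1-M_\textbf{A}/2}$, i.e.\ $\abs{\theta_\textbf{A}}\ll(\log T)^{2-M_\textbf{A}}$. But your trivial estimate for large $\theta_\textbf{A}$ only covers $\abs{\theta_\textbf{A}}\geq(\log T)^{M_\textbf{A}/\epsilon}$, which as $\epsilon\to0$ leaves a middle range $(\log T)^{2-M_\textbf{A}}\ll\abs{\theta_\textbf{A}}\ll(\log T)^{M_\textbf{A}/\epsilon}$ where neither estimate applies. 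So the Pólya--Vinogradov route does not give the stated bound uniformly in $\epsilon$, which is needed downstream (the divisor-bound summation in \S3.3 requires the result for arbitrarily small $\epsilon$). The natural repair, and what \cite[Lem.~5.11]{TimSantens} in effect does, is to stay inside the frobenian framework: $\psi_k$ is a frobenian multiplicative function of mean $0$ and conductor $O_\epsilon(\abs{\theta_\textbf{A}}^\epsilon)$, so \cite[Prop.~4.12]{TimSantens} gives a saving of a full power of $\log T$ (with only an $\abs{\theta_\textbf{A}}^\epsilon$ loss), landing on $(\log T)^{3M_\textbf{A}-1}\leq(\log T)^{\frac52 M_\textbf{A}}$ since $M_\textbf{A}\leq 2$; this works for all $\theta_\textbf{A}$ with no range splitting.
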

\begin{proof}
The proof follows from the proof of \cite[Lem.~5.11]{TimSantens} after adjusting for the different variables.    
\end{proof}
Now substituting Lemma~\ref{lem: chi is a 2-torsion element reduction} into \eqref{eq: S_A(X,T) first simplified form (M condition removed)} we have
\begin{align}
    \#\NAM^{loc}(T)&= \frac{1}{\abs{\Gamma_\textbf{A}}^3}\mathop{\sum}_{\substack{\chi\in\Gamma^{\vee}_\textbf{A}[2]}}S_\textbf{A}(\textbf{1},T)\prod_{i}\overline{\chi}(M_i)+O_{\epsilon}\left(\frac{T^\frac{3}{2}(\log T)^{\frac{5}{2}M_\textbf{A}}}{\abs{\theta_\textbf{A}}^{\frac{1}{2}-\epsilon}}\right)\\
    &= \label{eq: final form of N^loc(T) in terms of S(1,T)} \frac{\abs{\Gamma^{\vee}_\textbf{A}[2]}}{\abs{\Gamma_\textbf{A}}^3}S_\textbf{A}(\textbf{1},T)+O_{\epsilon}\left(\frac{T^\frac{3}{2}(\log T)^{\frac{5}{2}M_\textbf{A}}}{\abs{\theta_\textbf{A}}^{\frac{1}{2}-\epsilon}}\right),
\end{align}
where the second equality follows from $M_0M_1M_2$ being a square in $\Gamma_\textbf{A}$ for all $\textbf{M}\in \Psi_\textbf{A}$.

\subsection{The main term}
\label{subsec: The main term}
Before providing an asymptotic formula for $S_\textbf{A}(\textbf{1},T)$, we give another result we will use within the proof:
\begin{lemma}
\label{lem: partial sum of lambdas}
For any $T,f\geq 1$, there exists a constant $K_{\textbf{A},f}\leq K_{\textbf{A},1}\ll_\epsilon\abs{\theta}^\epsilon$ such that
 \begin{align*}
     \mathop{\sum}_{\substack{t\leq T\\ (t,f)=1}} \frac{\lambda(t;\theta_\textbf{A})}{t}=K_{\textbf{A},f}(\log T)^{M_{\textbf{A}}}+O_\epsilon\left((\abs{\theta_\textbf{A}}f)^\epsilon\right).
 \end{align*}
\end{lemma}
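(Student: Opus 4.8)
The plan is to recognise the sum as a Dirichlet-series mean value and apply the Landau--Selberg--Delange machinery in the uniform form recorded in \cite[Thm.~4.12]{TimSantens} (compare the discussion following the proof of \cite[Lem.~5.4]{TimSantens} that underlies Lemma~\ref{lem: defining lambda function}). First I would dispose of the coprimality condition by setting $\lambda_f(t;\theta_\textbf{A}):=\lambda(t;\theta_\textbf{A})\mathbf{1}_{(t,f)=1}$. This is again multiplicative, and it agrees with $\lambda(-;\theta_\textbf{A})$ as a frobenian multiplicative function except that its exceptional set is enlarged to $S\cup\{p:p\mid f\theta_\textbf{A}\}$; killing finitely many Euler factors changes neither the underlying Galois data nor the mean, so $\lambda_f(-;\theta_\textbf{A})$ is still frobenian of mean $M_\textbf{A}$ by Lemma~\ref{lem: defining lambda function}. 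The hypotheses of \cite[Thm.~4.12]{TimSantens} (boundedness on prime powers, a polynomial bound on the conductor in terms of $|\theta_\textbf{A}|f$, and the class-function description on unramified primes) follow directly from the definition of $\lambda$ and Lemma~\ref{lem: defining lambda function}.

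Applying \cite[Thm.~4.12]{TimSantens} together with partial summation to $\lambda_f(-;\theta_\textbf{A})$ then yields
\[
\sum_{\substack{t\le T\\(t,f)=1}}\frac{\lambda(t;\theta_\textbf{A})}{t}=K_{\textbf{A},f}(\log T)^{M_\textbf{A}}+O_\epsilon\!\left((|\theta_\textbf{A}|f)^\epsilon\right),
\]
where, up to the factor $\Gamma(M_\textbf{A}+1)^{-1}$ depending only on $M_\textbf{A}$, the constant $K_{\textbf{A},f}$ is the convergent Euler product
\[
K_{\textbf{A},f}=\Gamma(M_\textbf{A}+1)^{-1}\prod_{p\nmid f}\Bigl(1+\tfrac{\lambda(p;\theta_\textbf{A})}{p}\Bigr)\Bigl(1-\tfrac1p\Bigr)^{M_\textbf{A}},
\]
the local factor at each $p\mid\theta_\textbf{A}\prod_{q\in S}q$ being simply $(1-1/p)^{M_\textbf{A}}$ since $\lambda$ is supported on integers coprime to $\theta_\textbf{A}\prod_{q\in S}q$. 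Because only the leading term and an $O((\cdot)^\epsilon)$ error are required here (no power-of-$\log T$ saving), a rather crude form of the theorem is enough, which is why the statement comes out clean.

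It remains to establish the two bounds on the constant. For the monotonicity $K_{\textbf{A},f}\le K_{\textbf{A},1}$ one observes that $\lambda(p;\theta_\textbf{A})\in\{0,\tfrac12\}$, as computed from the definitions of $\alpha$ and $\beta$, so every factor $1+\lambda(p;\theta_\textbf{A})/p$ is at least $1$ and hence
\[
\frac{K_{\textbf{A},1}}{K_{\textbf{A},f}}=\prod_{p\mid f}\Bigl(1+\frac{\lambda(p;\theta_\textbf{A})}{p}\Bigr)\ge 1 .
\]
For $K_{\textbf{A},1}\ll_\epsilon|\theta_\textbf{A}|^\epsilon$, note that for $p\nmid\theta_\textbf{A}\prod_{q\in S}q$ the local factor equals $1+(\lambda(p;\theta_\textbf{A})-M_\textbf{A})/p+O(1/p^2)$, and since $\lambda(p;\theta_\textbf{A})$ has average $M_\textbf{A}$ the product over such $p$ converges (after Abel summation, exactly as in Selberg--Delange) and is $O(1)$; the remaining factors, one for each $p\mid\theta_\textbf{A}$ together with the fixed finite set $S$, are each $(1-1/p)^{M_\textbf{A}}=1+O(1/p)$, so their product is $\ll\prod_{p\mid\theta_\textbf{A}}(1+O(1/p))\ll_\epsilon|\theta_\textbf{A}|^\epsilon$ by the divisor bound. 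Positivity of $K_{\textbf{A},f}$ is immediate since all Euler factors are positive.

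The main obstacle is the uniformity in the conductor: one needs the error term's implied constant to depend on $\epsilon$ alone, and the leading constant to be controlled by $|\theta_\textbf{A}|^\epsilon$ uniformly in $f$. This is precisely the content of the quantitative Landau--Selberg--Delange estimate in \cite[Thm.~4.12]{TimSantens} (itself resting on \cite{LoughranDaniel2023Fmfa}), so the only genuine work is verifying that $\lambda_f(-;\theta_\textbf{A})$ meets its hypotheses with conductor polynomially bounded in $|\theta_\textbf{A}|f$, which is immediate from Lemma~\ref{lem: defining lambda function}.
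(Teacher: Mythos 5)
Your proposal follows essentially the same route as the paper: the paper's entire proof of this lemma is a pointer to the Landau--Selberg--Delange discussion in Santens (the passage surrounding \cite[Thm.~4.12]{TimSantens}), and you reconstruct exactly that argument -- pass from $\lambda$ to $\lambda_f=\lambda\cdot\mathbf{1}_{(\cdot,f)=1}$, observe it is still frobenian of the same mean with conductor polynomially bounded in $|\theta_\textbf{A}|f$, apply the quantitative LSD estimate, and do partial summation. The monotonicity $K_{\textbf{A},f}\le K_{\textbf{A},1}$ via $\lambda(p;\theta_\textbf{A})\in\{0,\tfrac12\}$ is correct and clean.

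There is, however, a misattribution in your argument for the bound $K_{\textbf{A},1}\ll_\epsilon|\theta_\textbf{A}|^\epsilon$, and it is worth flagging even though the final inequality survives. You claim the product over unramified primes $p\nmid\theta_\textbf{A}\prod_{q\in S}q$ is $O(1)$, and that the $|\theta_\textbf{A}|^\epsilon$ comes from the ramified factors $(1-1/p)^{M_\textbf{A}}$ via the divisor bound. This is backwards. The ramified factors satisfy $(1-1/p)^{M_\textbf{A}}\le 1$, so their product is bounded by $1$ and contributes nothing to growth. It is the unramified part that can grow: writing $\lambda(p;\theta_\textbf{A})-M_\textbf{A}$ as a linear combination of nontrivial quadratic characters (of conductor dividing $8\theta_\textbf{A}$), one has $\sum_p\frac{\lambda(p;\theta_\textbf{A})-M_\textbf{A}}{p}=\tfrac18\bigl(\log L(1,\chi_{\theta})-\log L(1,\chi_{-1})+\log L(1,\chi_{-1}\chi_\theta)\bigr)+O(1)$, so the unramified product is of order $L(1,\chi_\theta)^{1/8}L(1,\chi_{-1}\chi_\theta)^{1/8}$, which is not uniformly $O(1)$ but rather $\ll\log|\theta_\textbf{A}|$ by the standard unconditional bound on $L(1,\chi)$ in terms of the conductor. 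Since $\log|\theta_\textbf{A}|\ll_\epsilon|\theta_\textbf{A}|^\epsilon$, your conclusion $K_{\textbf{A},1}\ll_\epsilon|\theta_\textbf{A}|^\epsilon$ is still correct; I would just rewrite the justification so that the dependence on $|\theta_\textbf{A}|$ is traced to the $L(1,\chi)$ values in the Euler product rather than to the (in fact harmless) ramified factors.
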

\begin{proof}
This follows from the discussion between the start of \S5.4 and equation 5.21 in \cite{TimSantens}. 
\end{proof}
\begin{lemma}
\label{lem: assymptotic for S_A(1,T)}
For all $(\textbf{A},\textbf{M})\in$ \textbf{$\Phi(T)$}$\times$$\Psi_\textbf{A}$ there exists a constant $0<Q'_\textbf{A}\ll_{\epsilon} \abs{\theta_\textbf{A}}^{\epsilon}$ for all $\epsilon>0$ such that for $T>2$ we have
    \begin{equation*}
    S_\textbf{A}(\textbf{1},T)=\left(Q'_\textbf{A}+O_{\epsilon}\left(\frac{\abs{\theta_\textbf{A}}^{\epsilon}}{\left(\log T\right)^{\frac{6}{5}M_\textbf{A}}}\right)\right)\frac{T^\frac{3}{2}\left(\log T\right)^{3M_\textbf{A}}}{\abs{\theta_\textbf{A}}^{\frac{1}{2}}}.
    \end{equation*}
\end{lemma}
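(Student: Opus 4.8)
The plan is to follow \cite[\S5.4]{TimSantens}. The starting point is the expression for $S_\textbf{A}(\textbf{1},T)$ obtained from Lemma~\ref{lem: chi is a 2-torsion element reduction} together with the M\"obius inversion of Lemma~\ref{lem: dealing with mu(-) in terms of kappa}: a four-fold sum over $(\textbf{f},\textbf{g},\textbf{u},\textbf{t})$ of $\kappa(\textbf{f},\textbf{g})\prod_{i<j}\lambda(f_{ij};\theta_\textbf{A})\lambda(t_{ij};\theta_\textbf{A})$, subject to the coprimality conditions $(m_\textbf{A},\prod_iu_ig_i)=(f_{ij},t_{ij})=1$ and the box conditions $|A_ig_i^2u_i^2\prod_{j\neq i}f_{ij}t_{ij}|\leq T$, plus an error of strictly smaller order than the expected main term $T^{3/2}(\log T)^{3M_\textbf{A}}|\theta_\textbf{A}|^{-1/2}$. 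Throughout one uses $\lambda(\cdot;\theta_\textbf{A})\geq 0$, since $\alpha,\beta\geq 0$.

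The first step is to sum over $\textbf{u}$. For fixed $(\textbf{f},\textbf{g},\textbf{t})$ the three conditions on the $u_i$ decouple, and a standard elementary count of the integers $\leq X_i:=(T/|A_ig_i^2\prod_{j\neq i}f_{ij}t_{ij}|)^{1/2}$ coprime to $m_\textbf{A}$ gives $\#\{u_i\}=\tfrac{\phi(m_\textbf{A})}{m_\textbf{A}}X_i+O(|\theta_\textbf{A}|^\epsilon)$. Taking the product over $i$ of the three counts produces the main term
\begin{equation*}
\left(\frac{\phi(m_\textbf{A})}{m_\textbf{A}}\right)^{\!3}\frac{T^{3/2}}{|\theta_\textbf{A}|^{1/2}\,\prod_ig_i\prod_{i<j}f_{ij}t_{ij}}
\end{equation*}
times the arithmetic weight. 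In each cross term one of the three $u_i$-counts is replaced by its $O(|\theta_\textbf{A}|^\epsilon)$ error, but the surviving constraint $u_i\geq 1$ couples the remaining two $t$-variables through a single inequality, so after summing against the nonnegative weights $\lambda(\cdot;\theta_\textbf{A})$ one still gains a factor $T^{1/2}$ from that coupled sum, and the cross terms are of strictly smaller order than the main term, uniformly in $\textbf{A}$.

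The second step is to sum over $\textbf{t}$: applying Lemma~\ref{lem: partial sum of lambdas} to each sum $\sum_{t_{ij}}\lambda(t_{ij};\theta_\textbf{A})t_{ij}^{-1}$ in turn (with the relevant truncation and the condition $(f_{ij},t_{ij})=1$) extracts three factors of the shape $K_{\textbf{A},\cdot}(\log T)^{M_\textbf{A}}$, giving the main power $(\log T)^{3M_\textbf{A}}$; the dependence of the truncations on the remaining variables ($\log T_{ij}=\log T+O(\log(\cdots))$) and the error $O_\epsilon(|\theta_\textbf{A}|^\epsilon)$ in Lemma~\ref{lem: partial sum of lambdas} contribute only lower powers of $\log T$. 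What then remains is, up to the product of the $K_{\textbf{A},\cdot}$, the sum $\sum_{\textbf{f},\textbf{g}}\kappa(\textbf{f},\textbf{g})\prod_{i<j}\lambda(f_{ij};\theta_\textbf{A})\,/\,(\prod_ig_i\prod_{i<j}f_{ij})$, which converges absolutely: $\kappa$ has the Dirichlet series computed in the proof of Lemma~\ref{lem: dealing with mu(-) in terms of kappa} (convergent for all real parts $>\tfrac12$), and $\lambda$ obeys the divisor bound $\lambda(f;\theta_\textbf{A})\ll_\epsilon f^\epsilon$. Calling the resulting product of constants $Q'_\textbf{A}$ then yields the stated formula, with the error assembled from the pieces above.

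Finally one checks $0<Q'_\textbf{A}\ll_\epsilon|\theta_\textbf{A}|^\epsilon$. The factor $(\phi(m_\textbf{A})/m_\textbf{A})^3$ is positive; the constant $K_{\textbf{A},1}$ of Lemma~\ref{lem: partial sum of lambdas} is positive because $\lambda(\cdot;\theta_\textbf{A})\geq 0$ and $\lambda(p;\theta_\textbf{A})=\tfrac12$ on the positive-density set of primes split in $\Q(\sqrt{\theta_\textbf{A}})$, so $\sum_{t\leq T}\lambda(t;\theta_\textbf{A})t^{-1}\to\infty$ and hence $K_{\textbf{A},1}>0$; and the $(\textbf{f},\textbf{g})$-sum is an Euler product with local factors $1+O(1/p)$, which is non-vanishing once the finitely many primes dividing $2\theta_\textbf{A}$ or lying in $S$ are handled separately. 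The bound $Q'_\textbf{A}\ll_\epsilon|\theta_\textbf{A}|^\epsilon$ follows from $K_{\textbf{A},f}\leq K_{\textbf{A},1}\ll_\epsilon|\theta_\textbf{A}|^\epsilon$ and the divisor bound applied to the bad-prime part of the Euler product. I expect the main obstacle to be the uniform-in-$\textbf{A}$ error bookkeeping: keeping every error of order strictly below $T^{3/2}(\log T)^{3M_\textbf{A}}|\theta_\textbf{A}|^{-1/2}$ with the correct $|\theta_\textbf{A}|$-power, which forces one to exploit the coupling of the $t_{ij}$ (and $f_{ij}$) variables through the box constraints when invoking the single-variable estimate of Lemma~\ref{lem: partial sum of lambdas}.
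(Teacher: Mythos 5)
There is a genuine gap in the second step, the sum over $\textbf{t}$. You claim that applying Lemma~\ref{lem: partial sum of lambdas} to each $t_{ij}$-sum in turn, ``with the relevant truncation,'' yields three factors $K_{\textbf{A},\cdot}(\log T)^{M_\textbf{A}}$ and hence $(\log T)^{3M_\textbf{A}}$, with the dependence of the truncations on the other $t$-variables relegated to lower order via $\log T_{ij}=\log T+O(\log(\cdots))$. This is false: the box constraints $|A_i|g_i^2\prod_{j\neq i}f_{ij}t_{ij}\leq T$ for $i=0,1,2$ genuinely couple the three variables $t_{01},t_{02},t_{12}$, and the effective truncation in, say, the $t_{01}$-sum is $\asymp T/(t_{02}\cdots)$ where $t_{02}$ ranges up to $T$. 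Thus $\log T_{01}$ can be anything from $O(1)$ up to $\log T$, not $\log T + O(\text{small})$, and the iterated sum cannot be linearised in this way. The paper avoids this by comparing $R_\textbf{A}(T;\textbf{f},\textbf{g})$ with a simplified $R'_\textbf{A}(T;\textbf{f})$, converting $R'_\textbf{A}$ to a triple integral over the region $\{\prod_{j\neq i}t_{ij}\leq T\}$ via the integral-approximation lemma \cite[Lem.~4.17]{TimSantens}, and then making the substitution $s_{ij}=(\log t_{ij}/\log T)^{M_\textbf{A}}$; this exposes a geometric factor
\begin{equation*}
L_\textbf{A}=\int_{\substack{0\leq s_{ij} \\ \sum_{j\neq i}s_{ij}^{1/M_\textbf{A}}\leq 1\ \forall i}}\prod_{i<j}ds_{ij},
\end{equation*}
which is a constant in $(0,1)$ depending on $M_\textbf{A}$ (i.e.\ on the constraint polytope), and the true main term carries $L_\textbf{A}\prod_{i<j}K_{\textbf{A},f_{ij}}$ rather than $\prod_{i<j}K_{\textbf{A},f_{ij}}$. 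Without this step your argument does not produce a valid asymptotic for the $\textbf{t}$-sum: the cross-coupling contributes at leading order, not at lower order. The remainder of your proposal (the $\textbf{u}$-count and its error terms, the absolute convergence of the $(\textbf{f},\textbf{g})$-sum via the Dirichlet series of $\kappa$, and the positivity and $|\theta_\textbf{A}|^\epsilon$-bound for the resulting constant) is essentially the paper's argument and is sound, but the missing polytope-volume computation is the crux of the lemma and needs to be supplied.
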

\begin{proof}
We adapt the proof of \cite[Lem.~5.12]{TimSantens}. By Lemma~\ref{lem: chi is a 2-torsion element reduction} we have that $S_\textbf{A}(\boldsymbol{1},T)$ is equal to
\begin{align}
\label{eq: main term start point} 
\mathop{\sum_\textbf{f}\sum_\textbf{g}\sum_\textbf{u}\sum_\textbf{t}}_{\substack{\abs{A_i}g_i^2u_i^2\prod_{j\neq i}f_{ij}t_{ij}\leq T\\ (m_\textbf{A},\prod_iu_ig_i)=(f_{ij},t_{ij})=1}}\kappa(\textbf{f},\textbf{g})\prod_{i<j}\lambda(f_{ij};\theta_\textbf{A})\lambda(t_{ij};\theta_\textbf{A})
 + O_{\epsilon}\left(\frac{T^\frac{3}{2}\left(\log T\right)^{\frac{5}{2}M_\textbf{A}}}{\abs{\theta_\textbf{A}}^{\frac{1}{2}-\epsilon}}\right).  
\end{align}
Since the summand does not depend on $u_i$, the contribution of the sum over any $u_i$ is
\begin{equation*}
   \frac{\phi(m_\textbf{A})T^\frac{1}{2}}{m_\textbf{A}g_i(\abs{A_i}\prod_{j\neq i}f_{ij}t_{ij})^{\frac{1}{2}}}+O(\tau_2(m_\textbf{A})),
\end{equation*}
where $\phi$ denotes Euler's totient function. Summing over $u_2, u_1$, and then $u_0$, the error term that arises is bounded by
\begin{align*}
    \ll  \frac{\tau_2(m_\textbf{A})T}{\abs{A_1A_2}^{\frac{1}{2}}}\mathop{\sum_\textbf{f}\sum_\textbf{g}\sum_{\textbf{t}}}_{\substack{\abs{A_i}g_i^2\prod_{j\neq i}f_{ij}t_{ij}\leq T\\ (m_\textbf{A},\prod_iu_ig_i)=(f_{ij},t_{ij})=1}} \frac{\abs{\kappa(\textbf{f},\textbf{g})}g_0\prod_{k\neq 0}\sqrt{f_{0k}t_{0k}}\prod_{i<j}\lambda(f_{ij};\theta_\textbf{A})\lambda(t_{ij};\theta_\textbf{A})}{\prod_ig_i\prod_{j\neq i}f_{ij}t_{ij}}.
\end{align*}
Summing over $t_{01}\leq T/g_0^2t_{02}t_{03}\prod_{j\neq 0}f_{0j}$ we obtain the upper bound
\begin{align*}
\label{eq: error term for summing over u_i in assymtpotic formula}
    \ll  \frac{\tau_2(m_\textbf{A})T^\frac{3}{2}}{\abs{\theta_\textbf{A}}^{\frac{1}{2}}}\mathop{\sum_\textbf{f}\sum_\textbf{g}\sum_{\textbf{t}\setminus\{t_{01}\}}}_{\substack{t_{ij}\leq T}} \frac{\abs{\kappa(\textbf{f},\textbf{g})}\prod_{i<j}\lambda(f_{ij};\theta_\textbf{A})\lambda(t_{ij};\theta_\textbf{A})}{\prod_ig_i\prod_{j\neq i}f_{ij}t_{ij}}.
\end{align*}
Summing over $t_{02},t_{12}\leq T$, and using Lemma~\ref{lem: partial sum of lambdas} we have obtain the upper bound
\begin{align*}
    \ll_\epsilon \frac{T^\frac{3}{2}(\log T)^{3M_\textbf{A}}}{\abs{\theta_\textbf{A}}^{\frac{1}{2}-\epsilon}}\mathop{\sum_\textbf{f}\sum_\textbf{g}}\frac{\abs{\kappa(\textbf{f},\textbf{g})}}{\prod_ig_i\prod_{j\neq i}f_{ij}},
\end{align*}
where we have used the divisor bound $\tau_2(m_\textbf{A})\ll_\epsilon\abs{\theta_\textbf{A}}^\epsilon$ and we have rescaled $\epsilon>0$. Now by Lemma~\ref{lem: dealing with mu(-) in terms of kappa} we have that the sum over \textbf{f} and \textbf{g} is bounded by the sum of the absolute values of the summands defining $F(\textbf{1},\textbf{1})$. In particular since $F$ converges absolutely at $(\textbf{1},\textbf{1})$ this sum converges, and hence the error term is bounded by
\begin{align*}
    \ll_\epsilon \frac{T^\frac{3}{2}(\log T)^{3M_\textbf{A}}}{\abs{\theta_\textbf{A}}^{\frac{1}{2}-\epsilon}}.
\end{align*}
In particular summing over all $u_i$ we obtain
\begin{equation}
    \begin{split}
    \label{eq: main term first reduction}
        S_\textbf{A}(\textbf{1},T)=&\frac{\phi(m_\textbf{A})^3T^\frac{3}{2}}{m_\textbf{A}^3\abs{\theta_\textbf{A}}^{\frac{1}{2}}}\mathop{\sum_\textbf{f}\sum_\textbf{g}\sum_\textbf{t}}_{\substack{A_ig_i^2\prod_{j\neq i} f_{ij}t_{ij}\leq T\\
    (m_\textbf{A},\prod_ig_i)=(f_{ij},t_{ij})=1}}\frac{\kappa(\textbf{f},\textbf{g})\prod_{i<j}\lambda(f_{ij};\theta_\textbf{A})\lambda(t_{ij};\theta_\textbf{A})}{\prod_ig_i\prod_{j\neq i}f_{ij}t_{ij}}\\
    &+O_\epsilon\left(\frac{T^\frac{3}{2}(\log T)^{3M_\textbf{A}}}{\abs{\theta_\textbf{A}}^{\frac{1}{2}-\epsilon}}\right).
    \end{split}
\end{equation}
Now let
\begin{align*}
    \operatorname{R}_\textbf{A}(T;\textbf{f},\textbf{g}):=\mathop{\sum_\textbf{t}}_{\substack{A_ig_i^2\prod_{j\neq i} f_{ij}t_{ij}\leq T\\
    (m_\textbf{A},\prod_ig_i)=(f_{ij},t_{ij})=1}}\prod_{i<j}\frac{\lambda(t_{ij};\theta_\textbf{A})}{f_{ij}t_{ij}}, \text{ and }  \operatorname{R}'_\textbf{A}(T;\textbf{f}):=\mathop{\sum_\textbf{t}}_{\substack{\prod_{i<j}t_{ij}\leq T\\ (f_{ij},t_{ij})=1}}\prod_{i<j}\frac{\lambda(t_{ij};\theta_\textbf{A})}{t_{ij}}.
\end{align*}
Clearly that the difference of these sums is the subsum of $\operatorname{R}'_\textbf{A}(T;\textbf{f})$ defined by the additional condition that for each $i\in\{0,1,2\}$ we have
\begin{align*}
    \frac{T}{\abs{A_i}g_i^2\prod_{j\neq i}f_{ij}}\leq\prod_{j\neq i}t_{ij}\leq T.
\end{align*}
To evaluate this difference of sums we first sum over $t_{01}$ to obtain the upper bound
\begin{align*}
    \ll\log\left(\abs{A_0}g_0^2\prod_{j\neq 0}f_{0j}\right)\mathop{\sum_{\textbf{t}\setminus\{t_{01}\}}}_{\substack{ t_{ij}\leq T\\(f_{ij},t_{ij})=1}}\prod_{i<j}\frac{\lambda(t_{ij};\theta_\textbf{A})}{t_{ij}}.
\end{align*}
Now summing over $t_{02},t_{12}\leq T$ and using Lemma~\ref{lem: partial sum of lambdas} we obtain the upper bound
\begin{align*}
\ll_{\epsilon}\log\left(\abs{\theta_\textbf{A}}\prod_ig_i^2\prod_{j\neq i}f_{ij}\right)\abs{\theta_\textbf{A}}^{2\epsilon}(\log T)^{2M_\textbf{A}}\ll_{\epsilon} \left(\abs{\theta_\textbf{A}}\prod_ig_i\prod_{j\neq i}f_{ij}\right)^{3\epsilon}(\log T)^{2 M_\textbf{A}}.
\end{align*}
In particular we have
\begin{align}
\label{eq: main term evluating R_A(T;f,g)}
   \operatorname{R}_\textbf{A}(T;\textbf{f},\textbf{g})=\operatorname{R}'_\textbf{A}(T;\textbf{f})+O_{\epsilon}\left(\left(\abs{\theta_\textbf{A}}\prod_ig_i\prod_{j\neq i}f_{ij}\right)^{3\epsilon}(\log T)^{2 M_\textbf{A}}\right).
\end{align}
Now consider $\operatorname{R}'_\textbf{A}(T;\textbf{f})$. Let $\hat{g_{ij}},\hat{h_{ij}},\hat{f_{ij}}:\RR_{\geq0}\rightarrow\RR_{\geq0}$ be the functions given by
 \[\hat{g_{ij}}(x):=
        \begin{cases}
        K_{\textbf{A},f_{ij}}\frac{d}{dx}(\log x)^{M_\textbf{A}}, &\text{ if } x\geq 1,\\
        0, &\text{ otherwise};
        \end{cases}\]
 \[\hat{h_{ij}}(x):=
        \begin{cases}
         O_{\epsilon}((\abs{\theta_\textbf{A}}f_{ij})^{\epsilon}), &\text{ if } \frac{1}{2}\leq x\leq 1,\\
        0, &\text{ otherwise};
        \end{cases}\]
 \[\hat{f_{ij}}(x):=
        \begin{cases}
        \frac{\lambda(x;\theta_\textbf{A})}{x}, &\text{ if } x\geq 1,\\
        0, &\text{ otherwise}.
        \end{cases}\]
By Lemma~\ref{lem: partial sum of lambdas} we may apply \cite[Lem.~4.17]{TimSantens} to $\hat{g_{ij}},\hat{h_{ij}}$ and $\hat{f_{ij}}$, and hence obtain
\begin{align*}
    \operatorname{R}'_\textbf{A}(T;\textbf{f})=&\int^{\prod_{j\neq i}t_{ij}\leq T}_{1\leq t_{ij}}\prod_{i<j}K_{\textbf{A},f_{ij}}\frac{d}{dt_{ij}}(\log t_{ij})^{M_\textbf{A}}dt_{ij}\\
    &+ O_\epsilon\left(\sum_{J\subsetneq\{0,1,2\}}\int^{\prod_{j\neq i}t_{ij}\leq T}_{0\leq t_{ij}} \prod_{i,j\in J} \hat{g_{ij}}(t_{ij}) dt_{ij}\prod_{m,n\in\{0,1,2\}\setminus J}\hat{h_{ij}}(t_{ij})dt_{mn}\right).
\end{align*}
The integral within the error term can be bounded above by
\begin{equation*}
    \prod_{i,j\in J} K_{\textbf{A},f_{ij}}\int^{4T}_1 \frac{d}{dt_{ij}}(\log t_{ij})^{M_\textbf{A}} dt_{ij}\prod_{m,n\in\{0,1,2\}\setminus J}(\abs{\theta_\textbf{A}}f_{mn})^{\epsilon}\int^1_{\frac{1}{2}}dt_{mn}.
\end{equation*}
Since $\abs{J}\leq 2$ for any choice of $J$, we can bound this by
\begin{align*}
\ll_{\epsilon}O_\epsilon\left(\left(\abs{\theta_\textbf{A}}\prod_{i<j}f_{ij}\right)^{3\epsilon}\left(\log T\right)^{2M_\textbf{A}}\right).
\end{align*}
Thus we have
\begin{equation}
    \begin{split}
        \label{eq: main term evaluated R_A(t;f)}
    \operatorname{R}'_\textbf{A}(T;\textbf{f})=&\int^{\prod_{j\neq i}t_{ij}\leq T}_{1\leq t_{ij}}\prod_{i<j}K_{\textbf{A},f_{ij}}\frac{d}{dt_{ij}}(\log t_{ij})^{M_\textbf{A}}dt_{ij}\\  &+O_{\epsilon}\left(\left(\abs{\theta_\textbf{A}}\prod_{i<j}f_{ij}\right)^{3\epsilon}\left(\log T\right)^{2M_\textbf{A}}\right).
    \end{split}
\end{equation}
Now by making the change of variables $s_{ij}=\left(\frac{\log t_{ij}}{\log T}\right)^{M_\textbf{A}}$, we have that the main term of \eqref{eq: main term evaluated R_A(t;f)} becomes
\begin{align}
\label{eq: main term remove integral}
    \prod_{i<j}K_{\textbf{A},f_{ij}}(\log T)^{3M_\textbf{A}}\int^{\sum_{j\neq i} (s_{ij})^{\frac{1}{M_\textbf{A}}}\leq 1}_{0\leq s_{ij}}\prod_{i<j}ds_{ij}.
\end{align}
In particular the integral is equal to some constant $0<L_\textbf{A}\leq 1$. Therefore substituting \eqref{eq: main term remove integral} and \eqref{eq: main term evaluated R_A(t;f)} into \eqref{eq: main term evluating R_A(T;f,g)} we have
\begin{align}
\label{eq: MAIN TERM final eval for R_A(T;f,g)}
    \operatorname{R}_\textbf{A}(T;\textbf{f},\textbf{g})=L_\textbf{A}\prod_{i<j}K_{\textbf{A},f_{ij}}(\log T)^{3M_\textbf{A}}+O_{\epsilon}\left(\left(\abs{\theta_\textbf{A}}\prod_ig_i\prod_{j\neq i}f_{ij}\right)^{3\epsilon}(\log T)^{2 M_\textbf{A}}\right).
\end{align}
Furthermore, substituting \eqref{eq: MAIN TERM final eval for R_A(T;f,g)} into \eqref{eq: main term first reduction} we have that $S_\textbf{A}(\textbf{1},T)$ is equal to
\begin{equation}
\begin{split}
\label{eq: main term first S_A(1,T) assymtptotic}
     \frac{\phi(m_\textbf{A})^3L_\textbf{A}T^\frac{3}{2}(\log T)^{3M_\textbf{A}}}{m_\textbf{A}^3\abs{\theta_\textbf{A}}^{\frac{1}{2}}}\mathop{\sum_\textbf{f}\sum_\textbf{g}}_{\substack{A_ig_i^2\prod_{j\neq i} f_{ij}\leq T\\(m_\textbf{A},\prod_ig_i)=1}}\frac{\kappa(\textbf{f},\textbf{g})\prod_{i<j}K_{\textbf{A},f_{ij}}\lambda(f_{ij};\theta_\textbf{A})}{\prod_ig_i\prod_{j\neq i}f_{ij}}\\
    +O_{\epsilon}\left( \frac{T^\frac{3}{2}(\log T)^{3M_\textbf{A}}}{\abs{\theta_\textbf{A}}^{\frac{1}{2}-\epsilon}} + \frac{T^\frac{3}{2}\left(\log T\right)^{2M_\textbf{A}}}{\abs{\theta_\textbf{A}}^{\frac{1}{2}-\epsilon}}\mathop{\sum_\textbf{f}\sum_\textbf{g}}_{\substack{A_ig_i^2\prod_{j\neq i} f_{ij}\leq T\\(m_\textbf{A},\prod_ig_i)=1}}\frac{\kappa(\textbf{f},\textbf{g})\prod_{i<j}K_{\textbf{A},f_{ij}}\lambda(f_{ij};\theta_\textbf{A})}{\prod_ig_i^{1-\epsilon}\prod_{j\neq i}f_{ij}^{1-\epsilon}}\right).
\end{split}
\end{equation}
Now consider the sum
\begin{align}
\label{eq: sum over f and g}
\mathop{\sum_\textbf{f}\sum_\textbf{g}}_{\substack{A_ig_i^2\prod_{j\neq i} f_{ij}\leq T\\(m_\textbf{A},\prod_ig_i)=1}}\frac{\kappa(\textbf{f},\textbf{g})\prod_{i<j}K_{\textbf{A},f_{ij}}\lambda(f_{ij};\theta_\textbf{A})}{\prod_ig_i^{1-\epsilon}\prod_{j\neq i}f_{ij}^{1-\epsilon}}.
\end{align}
By Lemma~\ref{lem: partial sum of lambdas} we have $K_{\textbf{A},f_{ij}}\leq K_{\textbf{A},1}\ll_\epsilon \abs{\theta_\textbf{A}}^\epsilon$, and hence this is bounded above by
\begin{align*}
\ll_\epsilon\abs{\theta_\textbf{A}}^{3\epsilon}\mathop{\sum_\textbf{f}\sum_\textbf{g}}\frac{\abs{\kappa(\textbf{f},\textbf{g})}}{\prod_ig_i^{1-\epsilon}\prod_{j\neq i}f_{ij}^{1-\epsilon}}.
\end{align*}
The above series is the sum of the absolute values of the summands defining $F(\textbf{1}-\boldsymbol{\epsilon},\textbf{1}-\boldsymbol{\epsilon})$, and hence converges for sufficiently small $\epsilon>0$. Since the sum in the leading term of \eqref{eq: main term first S_A(1,T) assymtptotic} is clearly bounded above by \eqref{eq: sum over f and g} it also converges, and hence it is equal to some constant $0<F_\textbf{A}\ll_\epsilon\abs{\theta_\textbf{A}}^\epsilon$. Therefore substituting this constant into \eqref{eq: main term first S_A(1,T) assymtptotic} we obtain
\begin{align*}
  \frac{\phi(m_\textbf{A})^3T^\frac{3}{2}(\log T)^{3M_\textbf{A}}}{m_\textbf{A}^3\abs{\theta_\textbf{A}}^{\frac{1}{2}}}F_\textbf{A}L_\textbf{A}
    +O_{\epsilon}\left(\frac{T^\frac{3}{2}\left(\log T\right)^{\frac{5}{2}M_\textbf{A}}}{\abs{\theta_\textbf{A}}^{\frac{1}{2}-\epsilon}}\right).    
\end{align*}
Letting $Q'_\textbf{A}:=\phi(m_\textbf{A})^3F_\textbf{A}L_\textbf{A}/m_\textbf{A}^3$ we obtain the result.
\end{proof} 
\begin{proof}[Proof of Lemma~\ref{lem: assymptotic formula for local set}]
Substituting Lemma~\ref{lem: assymptotic for S_A(1,T)} into \eqref{eq: final form of N^loc(T) in terms of S(1,T)} we obtain the result.    
\end{proof}

\section{Uniform formula}
\label{sec: Uniform Formula}
In the paper \cite{On_the_arithmetic_of_del_Pezzo_surfaces_of_degree_2}, the authors were able to calculate $\Br S_\textbf{a}/ \Br \Q$ for all choices of $\textbf{a}\in(\Q^{\times})^3$. In their method, they utilised the fact that $H^3(\Q,\overline{\Q}^*)=0$ and the exact sequence below which arises from the Hochschild-Serre spectral sequence
\begin{equation*}
\begin{split}
  0\rightarrow   \Pic S_\textbf{a}\rightarrow (\Pic S_{\textbf{a},\overline{\Q}})^G \rightarrow\Br\Q\rightarrow \Br S_\textbf{a} \rightarrow \operatorname{H}^1(\Q, \Pic S_{\textbf{a},\overline{\Q}})\xrightarrow[]{d^{1,1}_S} \operatorname{H}^3(\Q,\overline{\Q}^*),
\end{split}
\end{equation*}
where $\overline{\Q}$ denotes an algebraic closure of $\Q$ and $G:=\Gal(\overline{\Q}/\Q)$. Let $a_0,a_1,a_2$ be algebraically independent transcendental elements over $\Q$. In this section we consider the surface given by
\begin{equation*}
    \mathcal{S}:a_0x_0^4+a_1x_1^4+a_2x_2^4=w^2\subseteq \P_k(1,1,1,2),
\end{equation*}
defined over $k:=\Q(a_0,a_1,a_2)$. By \cite[Lem.~3.3]{UematsuTetsuya} we have that $\operatorname{H}^3(k,\overline{k}^*)$ is non-trivial, and hence the same method cannot be used to calculate $\Br \mathcal{S}/ \Br k$. Instead, we will prove the last differential map in the above exact sequence is injective and hence obtain Theorem~\ref{thm: BrS/Brk=0}:
\begin{proof}[Proof of Theorem~\ref{thm: BrS/Brk=0}]
We adapt the methods used in \S6 of \cite{TimSantens} to prove the differential map is injective. Firstly, we can calculate $\operatorname{H}^1(k,\Pic \mathcal{S}_{\overline{k}})$ in exactly the same manner as $\operatorname{H}^1(\Q,\Pic S_{\overline{\Q}})$ was calculated for the generic case in \cite[Prop.~6]{On_the_arithmetic_of_del_Pezzo_surfaces_of_degree_2}. In particular we have $\operatorname{H}^1(k,\Pic \mathcal{S}_{\overline{k}})=\Z/2\Z$.\par
Let $U$ be the affine $k$-variety defined by the equation
\begin{equation*}
    a_0x_0^2+a_1x_1^2+a_2x_2^2-1=0,
\end{equation*}
and let $X\subseteq \P_k^3$ denote the compactification of $U$ given by
\begin{equation*}
   a_0x_0^2+a_1x_1^2+a_2x_2^2-t^2=0.
\end{equation*}
Consider the affine open $V:=\{\omega\neq 0\}\subseteq \mathcal{S}$. Then we get a morphism 
\begin{equation*}
   f:V\rightarrow U;[x_0:x_1:x_2:\omega]\mapsto [x_0^2:x_1^2:x_2^2:\omega], 
\end{equation*}
where we are viewing $U$ inside $X$. Therefore by the functoriality of the Hochschild-Serre spectral sequence we obtain a commutative diagram

 \[ \begin{tikzcd} \label{dia: commutative diagram from spectral sequence}
\operatorname{H}^1(k, \Pic \mathcal{S}_{\overline{k}}) \arrow{r}{\res{\cdot}{V}} \arrow[swap]{dr}{d^{1.1}_{\mathcal{S}}} & \operatorname{H}^1(k,\Pic V_{\overline{k}}) \arrow{d}{d^{1,1}_V}& \operatorname{H}^1(k,\Pic U_{\overline{k}}) \arrow[swap]{l}{f^*} \arrow{dl}{d^{1,1}_U}\\
& \operatorname{H}^3(k,\overline{k}^*)
\end{tikzcd}
\]
where by abuse of notation $\res{\cdot}{V}$ and $f^*$ denote the maps on the first cohomology induced by the maps $\res{\cdot}{V}:  \Pic \mathcal{S}_{\overline{k}} \rightarrow \Pic V_{\overline{k}}$ and $f^*:\Pic U_{\overline{k}}\rightarrow \Pic V_{\overline{k}}$, respectively. Let $\gamma:= \sqrt{-\frac{a_0}{a_1a_2}}$. From the proof of \cite[Prop.~2.2]{UematsuTetsuya} we know that 
\begin{equation*}
\operatorname{H}^1(k,\Pic U_{\Bar{k}})\cong \operatorname{H}^1(\Gal(k(\gamma)/k),\Pic U_{\Bar{k}})\cong \mathbb{Z}/2\mathbb{Z},
\end{equation*}
where the first isomorphism is the inflation map. Furthermore, by \cite[Thm.~3.1]{UematsuTetsuya} we know that $d^{1,1}_U$ is injective. Thus if we can find $\psi\in \operatorname{H}^1(k,\Pic\mathcal{S}_{\Bar{k}})$ such that $\res{\psi}{V}=f^*\phi$ for some generator $\phi\in \operatorname{H}^1(k,\Pic U_{\overline{k}})$, then by injectivity we have
\begin{equation*}
    d^{1,1}_\mathcal{S}(\psi)=d^{1,1}_V(\res{\psi}{V})=d^{1,1}_V(f^*\phi)=d^{1,1}_U(\phi)\neq0.
\end{equation*}
In particular, since $\operatorname{H}^1(k,\Pic \mathcal{S}_{\overline{k}})=\Z/2\Z$ we would have that $d^{1,1}_\mathcal{S}$ is injective. Therefore to prove the result it suffices to find such $\psi$ and $\phi$.\par
Let $\alpha:=\sqrt{-\frac{a_1}{a_0}}$ and $\beta:=\alpha\gamma$. Let $\Gal(k(\gamma)/k) = <\sigma>$ and 
\begin{equation*}
    L_1:= \{x_0-\alpha x_1 =x_2-\beta t=0\}\subseteq X_{\overline{k}}.
\end{equation*}
By \cite[Cor.~2.3]{UematsuTetsuya} the class of the $1$-cocycle $\phi:\Gal(k(\gamma)/k)\rightarrow\Pic U_{\overline{k}}$ defined by
\begin{equation*}
   \phi(1)=[0], \hspace{5mm} \phi(\sigma)=[L_1], 
\end{equation*}
generates $\operatorname{H}^1(\Gal(k(\gamma)/k),\Pic U_{\overline{k}})$, where by abuse of notation $[L_1]$ denotes the class of $L_1$ on $U_{\overline{k}}$. Now let
\begin{equation*}
    \hat{L_1}:= \{x_0^2+\alpha x_1^2=x_2^2+\beta \omega=0\}\subseteq \mathcal{S}_{\overline{k}}, 
\end{equation*}
then by applying $f^*$ to $\phi$ we obtain the $1$-cochain $f^*\phi:\Gal(k(\gamma)/k)\rightarrow \Pic V_{\overline{k}}$ defined by
\begin{equation*}
    \phi(1)=[0], \hspace{5mm} \phi(\sigma)=[\hat{L_1}\cap V], 
\end{equation*}
where $[\hat{L_1}\cap V]$ denotes the class of the divisor associated to $\hat{L_1}$ on $V_{\overline{k}}$. Now let $\psi:\Gal(k(\gamma)/k)\rightarrow \Pic\mathcal{S}_{\overline{k}}$ be the $1$-cocycle defined by
\begin{equation*}
    \psi(1)=[0], \hspace{5mm} \psi(\sigma)=[\hat{L_1}]-H,
\end{equation*}
where $H\in \Pic\mathcal{S}_{\overline{k}}$ is the hyperplane class. We claim that $[\hat{L_1}]$ is $\Gal(\overline{k}/k(\gamma))$-invariant and $\psi$ is a cochain. If these claims hold, then $[\psi]\neq0\in \operatorname{H}^1(\Gal(k(\gamma)/k),(\Pic\mathcal{S}_{\bar{k}})^{\Gal(\overline{k}/k(\gamma))})$. Thus since the inflation map is injective, we get a cochain $\inf(\psi) \neq0 \in \operatorname{H}^1(k,\Pic\mathcal{S}_{\overline{k}})$. Clearly $\res{\inf(\psi)}{V}=f^*\inf(\phi)$, and hence our result holds if these claims hold.\par
We firstly prove $[\hat{L_1}]$ is $\Gal(\overline{k}/k(\gamma))$-invariant. For any element $\theta\in \Gal(\overline{k}/k(\gamma))$, we have $\theta \cdot [\hat{L_1}]\in\{[\hat{L_1}],[\hat{L_1'}]\}$, where
\begin{equation*}
    \hat{L_1'}:=\{x_0^2-\alpha x_1^2=x_2^2-\beta \omega=0\}\subseteq S_{\overline{k}}.
\end{equation*}
Thus it suffices to prove $[\hat{L_1}]=[\hat{L_1'}]$. Let $\alpha':=\sqrt{-\frac{a_2}{a_0}}$, then we have
\begin{equation}
\label{eq: divisors for brauer group to show invariance 1}
    \left[\hat{L_1}\right]\bigcup\left[\{x_0^2-\alpha'x_2^2=\alpha x_1^2-\alpha'\beta \omega=0\}\right]\subseteq \left[\{x_0^2-\alpha x_1^2-\alpha' x_2^2 +\alpha' \beta \omega=0\}\right]=:X_1.
\end{equation}
Substituting $\omega= \frac{-1}{\alpha' \beta}\left(x_0^2-\alpha x_1^2-\alpha' x_2^2\right)$ into $\mathcal{S}_{\overline{k}}$ gives the equation
\begin{align*}
   0=2(x_0-\sqrt{\alpha}x_1)(x_0+\sqrt{\alpha}x_1)(x_0-\sqrt{\alpha'}x_2)(x_0+\sqrt{\alpha'}x_2).
\end{align*}
In particular $X_1$ has the same $4$ irreducible components as the left-side of \eqref{eq: divisors for brauer group to show invariance 1}, and hence they must be equal. Therefore
\begin{equation}
\label{eq: first divisor equality for brauer group to show invariance}
    X_1=\left[\hat{L_1}\right]+\left[\{x_0^2-\alpha'x_2^2=\alpha x_1^2-\alpha'\beta \omega=0\}\right].
\end{equation}
By the same argument as above we have
\begin{equation}
\label{eq: second divisor equality for brauer group to show invariance}
    X_2=\left[\hat{L_1'}\right]+\left[\{x_0^2-\alpha'x_2^2=\alpha x_1^2-\alpha'\beta \omega=0\}\right],
\end{equation}
where 
\begin{equation*}
    X_2:=\left[\{x_0^2+\alpha x_1^2-\alpha'x_2^2-\alpha'\beta\omega=0\}\right]\in \Pic \mathcal{S}_{\overline{k}}.
\end{equation*}
Combining equations~\eqref{eq: first divisor equality for brauer group to show invariance} and \eqref{eq: second divisor equality for brauer group to show invariance} gives the desired result. \par
We now prove $\psi$ is a cochain. Let
\begin{equation*}
    \hat{L_2}:= \{x_0^2-\alpha x_1^2=x_2^2+\beta\omega=0\}\subseteq \mathcal{S}_{\overline{k}}.
\end{equation*}
Clearly we have
\begin{equation*}
   \left[\{x_0^2-\alpha x_1^2=0\}\right]= \left[\hat{L_1}\right]+\left[\hat{L_2}\right].
\end{equation*}
Therefore since we have $\sigma [\hat{L_1}] = [\hat{L_2}]$, it follows that
\begin{equation*}
     \left[\hat{L_1}\right]+\sigma \left[\hat{L_1}\right] =\left[\{x_0^2-\alpha x_1^2=0\}\right]=2 H. \qedhere
\end{equation*}
\end{proof}

\bibliographystyle{amsalpha}{}
\bibliography{references}
\end{document}